\newcommand{\mc}{\mathcal}
\newcommand{\mb}{\mathbb}
\newcommand{\mr}{\mathring}
\newcommand{\wh}{\widehat}
\newcommand{\wt}{\widetilde}
\newcommand{\ol}{\overline}
\title{Estimates for the Constant Mean Curvature Dirichlet Problem on Catenoidal necks}
\long\def\symbolfootnote[#1]#2{\begingroup%
\def\thefootnote{\fnsymbol{footnote}}\footnote[#1]{#2}\endgroup}
\theoremstyle{plain}
\numberwithin{equation}{section}
\newtheorem{theorem}{Theorem}[section]  
\newtheorem{lemma}[theorem]{Lemma}
\newtheorem{corollary}[theorem]{Corollary}
\newtheorem{proposition}[theorem]{Proposition}
\newtheorem{remark}[theorem]{Remark}
\title{Estimates for the Constant Mean Curvature Dirichlet Problem on catenoids}
\author{Stephen J.  Kleene}
\address{Department of Mathematics, University of Rochester, Rochester, NY}
\email{skleene@ur.rochester.edu}
\begin{document}
\maketitle

\begin{abstract}   In this article, we solve the constant mean curvature dirichlet problem on catenoidal necks with small scale  in  $\mb{R}^3$. The solutions are found in exponentially weighted H\"older spaces with non-integer weight and  are a-priori  bounded by a uniform constant times $r^{1 + \gamma}$, where $r$ denotes the distance to the axis of the neck and where $\gamma$ belongs to the interval $(0, 1)$. By comparing the solutions with their limits on the disk, we improve the estimate to $\gamma  =1$. As a corollary, we prove differentiability of solutions in $\tau$ down to $\tau = 0$. The surfaces we construct have applications to gluing constructions. 
\end{abstract}
\section{Introduction}

  A \emph{catenoidal neck} in $\mb{R}^3$ is the intersection of a catenoid with a tube about its axis of rotation. Up to scalings and rigid motions, catenoidal necks comprise a one parameter family $\mc{N}_\tau = \tau \mc{C} \cap \{r \leq 1 \}$,
where here $\mc{C}$ denotes the \emph{standard catenoid} implicitly given by the equation $r = \cosh(z)$ in $\mb{R}^3$ and where $r = \sqrt{x^2 + y^2}$ denotes the distance to the $z$-axis. We refer to the parameter $\tau$ as the \emph{scale} of the catenoidal neck $\mc{N}_\tau$. Let $\mc{N}_{\tau}^\pm$ denote the intersection of $\mc{N}_\tau$ with the half space $\{\pm z\geq 0 \}$.  Then $\mc{N}^\pm_{\tau}$ converges in $C^{k}$  to the disk $\mc{D}$ of radius $1$  in the plane $\{ z = 0\}$  away from the origin as $\tau \rightarrow 0$.  The sheets  $\mc{N}^\pm_{\tau}$ are smooth modulo vertical translations in the sense that the translation
  \[
  \wt{\mc{N}}^+_{\tau} = \mc{N}_\tau^+ - \tau\text{arccosh}(1/\tau) e_z
  \]
  of $\mc{N}_{\tau}$ converges smoothly to the disk $\mc{D}$  away from the origin in $C^k$, and the analogous statement holds for $\mc{N}^-$. In order to avoid introducing additional notation, we identify $\wt{\mc{N}}^\pm = \mc{N}^\pm$ throughout this article. The variation fields generated by $\mc{N}^{\pm}_{\tau}$ at $\tau$ = 0 are respectively $- \log(r)$ and $\log(r)$, where $r$ is the polar distance on $\mb{R}^2$ 

In this article, we study the constant mean curvature Dirichlet problem  on catenoidal necks of small scale. For $\tau$ sufficiently small, the surfaces $\mc{N}_{\tau}$ are non-degenerate, and thus the CMC dirichlet problem is solvable for small boundary data and small mean curvature.  Since the family $\mc{N}_{\tau}$ is divergent in the moduli space, there is a-priori no uniform estimate on the smallness required for solvability across the family, and in fact there are obstructions arising from  lower modes--functions in the span of $1$, $\cos(x)$ and $\sin(x)$--on the circle.  We show that the Dirichlet problem is uniformly solvable up to lower modes across the family as the scale tends to zero. Since the boundary values of our solutions are not completely determined, in order to uniquely characterise our solutions we demand that the solutions satisfy a set of orthogonality conditions along the waist of the necks, and a function satisfying these conditions is said to be \emph{normalized}.  Our main theorem can them be stated as follows:

\begin{theorem}\label{Prop:MainTheoremSimple}
There is $\epsilon> 0$ and $C > 0$ such that: Given a function $f \in C^{2, \alpha} (\partial \mc{N}_\tau)$  and $\delta \in (-\epsilon, \epsilon )$, there is a unique normalized  function $\mc{h}_{\tau, \delta, f}$ in $C^{2, \alpha} (\mc{N}_\tau)$ satisfying the estimate
\[
|\mc{h}_{\tau, \delta, f}| \leq^C r^2\| f\|
\] 
and such that the normal graph $\mc{N}_{\tau, \delta, f}$ over $\mc{N}_\tau$ has constant mean curvature $\delta$ and such that the trace of $\mc{h}_{\tau, \delta, f}$ agrees with $f$ up to lower modes. 
 \end{theorem}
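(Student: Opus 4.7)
The plan is to reduce the constant mean curvature equation on $\mathcal{N}_\tau$ to a fixed point problem for the Jacobi operator of the catenoid, and to carry out the inversion in exponentially weighted H\"older spaces uniformly as $\tau \to 0$. Writing a normal graph as $u$, the mean curvature expands as $H(u) = L_\tau u + Q_\tau(u)$, where $L_\tau = \Delta_{\mathcal{N}_\tau} + |A|^2$ is the Jacobi operator on $\mathcal{N}_\tau$ and $Q_\tau$ collects the nonlinear remainder. After subtracting off a fixed extension of $f$ into the interior, the theorem reduces to solving $L_\tau u = \delta + g_\tau(u)$ with homogeneous Dirichlet data, modulo the lower modes.

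The heart of the argument is a uniform linear inversion. I would introduce a weighted H\"older norm $C^{2,\alpha}_{1+\gamma}(\mathcal{N}_\tau)$ with weight $r^{1+\gamma}$ and $\gamma \in (0,1)$, and construct a right inverse $\mathcal{R}_\tau$ with operator norm uniform in $\tau$, taking $(g,f)$ to a normalized solution of $L_\tau u = g$ whose trace agrees with $f$ modulo lower modes. The right inverse can be obtained by separation of variables on the half-catenoid: for each Fourier mode on the circle the equation becomes an explicit ODE, and uniformity in $\tau$ can be read from its indicial structure. The difficulty is that the zero and $\pm 1$ Fourier modes carry obstructions coming from the Jacobi fields $\log r$ (the limiting variation field at $\tau = 0$) and the horizontal translation generators, which persist in the disk limit; the normalization conditions along the waist are designed to suppress exactly these, and the non-integer weight $\gamma$ must be chosen away from the indicial roots of the flat Laplacian on the disk.

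With the uniform linear inverse in hand, I would close the nonlinear problem by the contraction mapping principle. The quadratic estimate
\[
\|Q_\tau(u)\|_{C^{0,\alpha}_{\gamma-1}} \;\lesssim\; \|u\|^2_{C^{2,\alpha}_{1+\gamma}}
\]
combined with the bound on $\mathcal{R}_\tau$ produces a contraction on a ball of radius comparable to $\|f\| + |\delta|$ in $C^{2,\alpha}_{1+\gamma}$, yielding a unique normalized solution $\mathcal{h}_{\tau,\delta,f}$ together with the weaker a priori bound $|\mathcal{h}_{\tau,\delta,f}| \leq C r^{1+\gamma}\|f\|$.

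To upgrade the estimate to $r^2$, I would compare $\mathcal{h}_{\tau,\delta,f}$ to the corresponding limit on the disk. Since the variation fields of $\mathcal{N}^\pm_\tau$ at $\tau = 0$ are $\mp \log r$, the limiting linear equation on $\mathcal{D}$ is the flat Dirichlet problem for $f$, and the normalization conditions kill its zero and first Fourier modes, forcing the disk solution $u_{\mathcal{D}}$ to vanish to second order at the origin. Writing $\mathcal{h}_{\tau,\delta,f} = u_{\mathcal{D}} + v$ and applying the linear theory to $v$, whose source is the small perturbation $(L_\tau - \Delta_{\mathcal{D}}) u_{\mathcal{D}}$ plus the controlled nonlinearity, gains an extra factor of decay at the waist and delivers the improved bound $|\mathcal{h}_{\tau,\delta,f}| \leq C r^2 \|f\|$. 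The main obstacle throughout I expect to be the $\tau$-uniformity of $\mathcal{R}_\tau$, since this is exactly where the non-compactness of the family $\mathcal{N}_\tau$ in the moduli space manifests itself, and it is only the careful elimination of the lower modes together with the choice of non-integer weight $\gamma$ that makes the construction go through.
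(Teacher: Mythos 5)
Your proposal is correct in outline and follows essentially the same route as the paper: uniform invertibility of the linearized operator in weighted H\"older spaces with non-integer weight $\gamma\in(0,1)$ after projecting away the lower modes and imposing the normalization at the waist, a fixed-point argument for the nonlinear problem giving the a priori $r^{1+\gamma}$ bound, and then the upgrade to $r^2$ by transplanting the limiting disk solution (which vanishes to second order) and applying the linear theory to the difference, exactly as in Proposition \ref{Prop:ImprovedDecay}. The only departures are inessential: you propose to prove the uniform linear estimate by mode-by-mode ODE/indicial analysis rather than the paper's blow-up compactness argument modeled on Mazzeo--Pacard (Lemma \ref{Prop:Classification} and Proposition \ref{Prop:WeightedInvertibilty}), and you use the contraction mapping principle where the paper invokes the Implicit Function Theorem via the uniform $C^2$ bounds of Proposition \ref{Prop:SmoothnessinWeightedSpaces}.
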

 
 Our solutions are found by solving the Poisson-Dirichlet problem for the Jacobi operator of the catenoid, or rather its pullback to the cylinder under its conformal parametrization, in exponentially weighted H\"older spaces. Similar problems arise in various formulations in numerous articles (\cite{Kapouleas}, \cite{Kapouleas2},  \cite{Kapouleas3}, \cite{Kapouleas-Mcgrath}, \cite{mazzeo-pacard}, \cite{mazzeo-pacard2}, \cite{mazzeo-pacard-pollack}).  A standard feature of such problems is that rates of exponential decay must be a non-integer. Our solutions are found in such spaces and thus a-priori satisfy the uniform estimate $|\mc{h}_{\tau, \delta, f}| \leq^C r^{1 + \gamma}\| f\|$.
 
 The refinement offered by Theorem \ref{Prop:MainTheoremSimple}  lies in the improvement of the decay estimate for the solutions to $\gamma = 1$,  which allows us to prove  differentiability of the solutions in $\tau$ at $\tau = 0$. We obtain these improved estimates by comparing our solutions with their limits  on this disk. 

The family of solutions $\mc{h}_{\tau, \delta, f}$ extends differentiably in $\tau$ to $\tau = 0$ in the following sense:
If $\mc{h}^\pm_{\tau, \delta, f}$ denotes the restriction of $\mc{h}_{\tau, \delta, f}$ to $\mc{N}^{\pm}_{\tau}$, then $\mc{h}^\pm_{\tau, \delta, f}$ converges in $C^{2, \alpha/2}$ away from the origin to a constant mean curvature graph  over the disk satisfying the estimate
\[
\| \mc{h}^{\pm}_{\delta, f}\| \leq^C r^2\|f^\pm \|
\]
and with trace equal to $f^+$--the restriction of $f$ to the top boundary component  of $\mc{N}^{+}_\tau$--up to lower modes. Since such solutions are unique, the family $\mc{h}_{\tau, \delta, f}$ extends continuously in  $C^{2, \alpha}$ to $\tau = 0$ in the sense that the family of  functions $ \mc{h}^{\pm}_{\tau, \delta, f}$ extend continuously in $C^{2, \alpha}$ away from the origin to $\tau = 0$.  In order to show that the family extends differentiably in $\tau$ to $\tau = 0$, we fix a family of diffeomorphisms $\phi_{\tau_0}^{\tau}: \mc{N}_{\tau_0} \rightarrow \mc{N}_{\tau}$ for $\tau$ near $\tau_0$, which allows us to differentiate the solutions in $\mc{h}_{\tau, \delta, f}$ in $\tau$ by fixing a common domain for $\tau$ near $\tau_0$. The mappings $\phi_{\tau_0}^{\tau}$ are obtained by representing $\mc{N}_\tau$ as a normal graph over $\mc{N}_{\tau_0}$ after slightly adjusting the scale of $\mc{N}_{\tau}$. The maps 
 converge to a limiting map as $\tau_0 \rightarrow 0$ in the following sense: Let $\phi_{\tau_0}^{\tau, \pm}$ denote the restriction of $\phi_{\tau_0}^{\tau}$ to the graphical sheet $\mc{N}^{\pm}_{\tau_0}$. Then $\mc{N}^{\pm}_{\tau_0}$ converges in $C^{k}$ on away from the origin to the disk $\mc{D}$,  and the maps $\phi_{\tau_0}^{\tau, \pm}$ converge away from the origin in $C^{k}$ to the parametrization of $\mc{N}^{\pm}_{\tau}$ as a graph over $\mc{D}$. 
 The functions $\mc{h}_{\tau, \delta, f}$ then depend smoothly on $\tau$ for $\tau >0$ and   we define the $\tau$-derivative of $\mc{h}_{\tau, \delta ,f}$ at $\tau_0$ to be
 \[
 \dot{\mc{h}}_{\tau_0, \delta, f} : = \left. \frac{d}{d \tau} \right|_{\tau = \tau_0} \left( \mc{h}^{\tau}_{\tau_0, \delta, f}\right), \quad \mc{h}_{\tau_0, \delta, f}^{\tau} : = \mc{h}_{\tau, \delta, f} \circ \phi_{\tau_0}^{\tau} 
 \]
 The Gateaux derivative in $\tau$ at $\tau = 0$ is given by
 \[
 \dot{\mc{h}}^{\pm}_{\delta, f}  : =   \lim_{\tau \rightarrow 0} \frac{\mc{h}^{\pm}_{\tau, \delta, f} - \mc{h}^{\pm}_{ \delta, f}}{\tau}
 \] 
and is well defined as a limit in $C^{2, \alpha/2}$ on  compact subsets  $\mc{D}$ of the unit disk way from the origin. We then show:
\begin{theorem} \label{Prop:MainTheoremDifferentiability}
The functions $\dot{\mc{h}}^{\pm}_{\tau_0, \delta, f}$ are uniformly bounded in $C^{2, \alpha}$ away from the origin and converge  as $\tau_0 \rightarrow 0$ to $\dot{\mc{h}}^{\pm}_{\delta, f}$.
\end{theorem}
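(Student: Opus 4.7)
The plan is to derive a linear elliptic equation for the variation $\dot{\mc{h}}_{\tau_0, \delta, f}$ on the fixed domain $\mc{N}_{\tau_0}$ by pulling back the CMC condition under $\phi_{\tau_0}^{\tau}$ and differentiating in $\tau$ at $\tau = \tau_0$, and then to invert this linear equation uniformly in $\tau_0$ using the inversion theory behind Theorem \ref{Prop:MainTheoremSimple}. Representing the CMC graph $\mc{N}_{\tau, \delta, f}$ as a normal graph over $\mc{N}_{\tau_0}$ via $\phi_{\tau_0}^{\tau}$, the constant mean curvature condition takes the form $\mc{F}_{\tau_0}(\tau, u) = \delta$ for a nonlinear elliptic operator $\mc{F}_{\tau_0}$ acting on functions $u$ on $\mc{N}_{\tau_0}$, supplemented by boundary and normalization conditions that depend smoothly on $\tau$. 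The pulled-back family $u = \mc{h}_{\tau_0, \delta, f}^{\tau}$ is a smooth $\tau$-family of solutions, and differentiating the equation at $\tau = \tau_0$ yields
\[
\mc{L}_{\tau_0}\, \dot{\mc{h}}_{\tau_0, \delta, f} = -\pd_{\tau} \mc{F}_{\tau_0}(\tau, \mc{h}_{\tau_0, \delta, f}) \big|_{\tau = \tau_0},
\]
where $\mc{L}_{\tau_0}$ is the Jacobi operator of $\mc{N}_{\tau_0}$ plus a lower-order correction controlled by $\mc{h}_{\tau_0, \delta, f}$, and the boundary and normalization data of $\dot{\mc{h}}_{\tau_0, \delta, f}$ are bounded by $\|f\|$.

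To obtain a uniform bound, I would use that $\pd_\tau \mc{F}_{\tau_0}(\tau_0, 0) = 0$ since $\mc{N}_{\tau}$ is minimal for every $\tau$, so the forcing term is bilinear to leading order in the variation field $\pd_\tau \phi_{\tau_0}^{\tau}\big|_{\tau = \tau_0}$ of the reparametrization and in $\mc{h}_{\tau_0, \delta, f}$ together with its first and second derivatives. Combined with the sharp estimate $|\mc{h}_{\tau_0, \delta, f}| \leq^C r^2 \|f\|$ from Theorem \ref{Prop:MainTheoremSimple} and the corresponding Schauder bounds on derivatives, this places the forcing in the exponentially weighted $C^{0, \alpha}$ class used to invert $\mc{L}_{\tau_0}$, with a bound depending only on $\|f\|$. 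The uniform inversion provided by Theorem \ref{Prop:MainTheoremSimple} then yields
\[
\| \dot{\mc{h}}_{\tau_0, \delta, f}\|_{C^{2, \alpha}} \leq C \|f\|
\]
uniformly in $\tau_0$.

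To identify the limit as $\tau_0 \to 0$, I would invoke the $C^{k}$ convergence of $\mc{N}^{\pm}_{\tau_0}$ to $\mc{D}$ and of $\phi_{\tau_0}^{\tau, \pm}$ to the graph parametrization of $\mc{N}^{\pm}_\tau$ over $\mc{D}$ recorded in the introduction. On compact subsets of $\mc{D} \setminus \{0\}$ these imply $C^{0, \alpha/2}$-convergence of both the coefficients of $\mc{L}_{\tau_0}$ and of the forcing to their counterparts on $\mc{D}$ obtained by differentiating the analogous CMC problem for $\mc{h}^\pm_{\delta, f}$. Combining this convergence with the uniform $C^{2, \alpha}$ bound of the previous paragraph and Arzel\`a-Ascoli extracts a $C^{2, \alpha/2}$ subsequential limit of $\dot{\mc{h}}^{\pm}_{\tau_0, \delta, f}$ away from the origin; by uniqueness for the limiting linear Dirichlet problem on $\mc{D}$ this limit must coincide with $\dot{\mc{h}}^{\pm}_{\delta, f}$, and a standard subsequence argument promotes the convergence to the full family.

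The main obstacle is the breakdown of the approximation $\mc{N}^\pm_{\tau_0} \approx \mc{D}$ near the neck at $r = 0$, where both $\mc{N}_{\tau_0}$ and the diffeomorphisms $\phi_{\tau_0}^\tau$ degenerate as $\tau_0 \to 0$. For this reason the entire argument has to be run in the exponentially weighted H\"older spaces of the paper, and the upgrade of the decay rate of $\mc{h}_{\tau_0, \delta, f}$ from the generic $r^{1 + \gamma}$ to $r^2$ supplied by Theorem \ref{Prop:MainTheoremSimple} is essential: it is precisely this improvement that puts $\pd_\tau \mc{F}_{\tau_0}$ in the correct weighted class to allow uniform inversion of $\mc{L}_{\tau_0}$ across the full family of scales $\tau_0 \in (0, \epsilon)$.
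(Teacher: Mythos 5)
Your overall skeleton (differentiate the pulled-back CMC equation at $\tau=\tau_0$, bound the variation by inverting the linearized operator uniformly, then pass to the limit and identify it by uniqueness on the disk) matches the paper's strategy, but the central estimate you rely on is not true, and this is exactly where the paper has to work hardest. You claim that, thanks to the bound $|\mc{h}_{\tau_0,\delta,f}|\leq C r^2\|f\|$, the forcing term $-\pd_\tau\mc{F}_{\tau_0}$ lies, uniformly in $\tau_0$, in the exponentially weighted class $W^{0,\alpha,\gamma}_{\tau}$ used for the inversion behind Theorem \ref{Prop:MainTheoremSimple}. The dominant part of that forcing is the bilinear term $\mc{E}_1 = D^2H'_{\mc{F}}(\xi/\tau_0,\mc{h}')$ of Lemma \ref{Prop:TauVariationEquation}, where the variation field $\xi/\tau_0$ is of size $1/\tau_0$; even after using the exponential flatness of the catenoid and the improved decay of $\mc{h}'$ (Proposition \ref{Prop:ImprovedDecay}), the best one gets is $|\mc{E}_1|\leq C\epsilon\cosh^{-1}(s)\sim \epsilon\,\tau/r$, i.e. a term that is \emph{decaying} rather than growing, and whose $W^{0,\alpha,\gamma}_{\tau}$ norm is of order $\tau^{-\gamma}$, hence divergent as $\tau_0\to 0$. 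So Proposition \ref{Prop:WeightedInvertibilty} cannot be applied to the whole forcing with uniform constants. The paper instead splits the forcing as $\mc{E}_1+\mc{E}_2$ (large but decaying, small but growing), proves a separate invertibility theorem for exponentially decaying sources orthogonal to the lower modes (Proposition \ref{Prop:DecayInvertibility}), integrates the lower modes of $\mc{E}_1$ by hand via variation of parameters, and — crucially — uses the rotational symmetry/mode-preservation argument of Lemma \ref{E1Estimates} together with the lower-part estimate $\|\ol{\mc{h}}'\|\leq C\epsilon\tau$ of Proposition \ref{Prop:ImprovedDecay} to show $\|\ol{\mc{E}}_1\|\leq C\epsilon\tau^\gamma$. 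None of this structure is visible in your proposal, and without it the uniform bound on $\dot{\mc{h}}_{\tau_0,\delta,f}$ does not follow.

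A second, related gap is in the identification of the limit. The limiting problem on $\mc{D}$ is only a \emph{modified} Poisson--Dirichlet problem: the trace is prescribed only up to lower modes, and uniqueness (Lemma \ref{ModifiedDiskProblem}) requires in addition that the limit be normalized, i.e. vanish with its gradient at the origin. A uniform $C^{2,\alpha}$ bound on compact subsets away from the origin plus convergence of coefficients does not give this; the paper obtains it from the quantitative bound $|\dot{\mc{h}}_{\tau,\delta,f}|\leq C\epsilon(\tau^\gamma/r^\gamma + r^{1+\gamma})$ of Lemma \ref{Eq:HDotTotalBound}, whose first term controls the behavior across the neck and tends to zero pointwise away from the origin. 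This bound is again a product of the $\mc{E}_1/\mc{E}_2$ decomposition and the improved lower-mode estimates, so your appeal to "uniqueness for the limiting linear Dirichlet problem" is not yet justified as written.
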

As an immediate corollary,  the surfaces $\mc{N}_{\tau, \delta, f}$ depend differentiably on $\tau$ at $\tau = 0$, in the following sense: Letting $\mc{N}^{\pm}_{\tau, \delta, f}$ denote the normal graph of $\mc{h}^{\pm}_{\tau, \delta, f}$ over $\mc{N}_{\tau}^{\pm}$, the  surface $\mc{N}^{\pm}_{\tau, \delta, f}$ depends smoothly on $\delta$ and $f$, smoothly in $\tau$ for $\tau > 0$ and once differentiably in $\tau$ at $\tau = 0$, in $C^{2, \alpha}$ on compact subsets away from the origin. In particular, the family  has a $C^1$ extension to $\tau = 0$.  Continuous extensions  for the derivatives in $\delta$ and $f$  are easy and most of the technical work of the paper is devoted to showing that the derivative in $\tau$ extends continuously.

\subsection{General comments about notation, conventions, and the proof}
\subsubsection{Notation, terminology and other convections}
We do not define within this article widely used notation and terminology  that is considered standard, such as, for example, H\"older spaces $C^{k, \alpha} (D)$ or standard objects associated with euclidean spaces. We use the standard practice that $C$  and $\epsilon$ denote any number of constants that must be chosen large or small subject to various consideration. Thus, for example the reader should regard an `$\epsilon$', when it is encountered, as the minimum of the previous encountered `$\epsilon$' along with any additional constraints required. Since we are not interested in quantifying  the dependence of our solutions on any of these constants, we do not distinguish them. We use the non-standard notation $f \leq^C g$ to mean $f \leq C g$, which is convenient in the case that the right hand side contains many terms and would otherwise need to be enclosed in parentheses. 

Frequently, the surfaces we consider in this article come with a specified parametrization. When this is case, we do not carefully distinguish between objects defined on the surface and the  pullbacks under the parametrization. Whether we are considering a function as defined on a surface or on the parametrization domain will be clear from context. 

\subsubsection{General comments of the proof}
 The central objects we consider are geometric operators such as the mean curvature and unit normal operators $F \mapsto H_F$ and $F \mapsto N_F$ defined on immersions $F$, and the resulting remainder terms that arise when perturbing immersions  by graphs. Rather than write these terms explicitly, we appeal abstractly to the fact that these operators are smooth--in the sense of Frechet differentiability--as mappings on  spaces of class $C^{2, \alpha}$ H\"older immersions, and are thus locally bounded in $C^k$  in their domains. Standard integral formulae then apply for taylor remainders and most of the required estimates follow easily.  A few of the more delicate estimates require deeper analysis, and we pause here to observe them so they are not a surprise to the reader later. Firstly in places--such as the proof of Lemma \ref{E1Estimates}--we will need the fact that the hessian of the graph mean curvature operator  vanishes  at affine maps. This allows for improved decay estimates for various terms since the standard conformal parametrization of the catenoid is exponentially flat. 
\subsubsection{Arguments requiring mode preservation} \label{Sec:ModePreservation}
The most delicate estimates in the paper arise in the proof of Propositions \ref{Prop:ImprovedDecay} and  \ref{E1Estimates}. They require a separation of the error term into the lower and higher parts, with the higher part satisfying improved bounds relative to the whole terms. In both these cases, the rotational symmetry of the catenoid is exploited in the following way: Let $F$ be an immersion of a domain $D = \mb{S}^1 \times I$ with coordinates $(x, s)$ into $\mb{R}^3$ and $u$ a function on $D$.  Then the mean curvature $H_F(u)$ of the normal graph over $F$ by $u$   can be computed as:
\[
H_{F} (u)  = \Phi (\mc{J}_u(x, s), x, s)
\]
where $\mc{J}_u = \left(u, \nabla u, \nabla^2 u \right)$ is the vector of partial derivatives of $u$ of  up to order $2$, and where $\Phi$ is a smooth function defined on an open subset of a euclidean space about the origin. Suppose now that $F$ is an immersion of rotation, so in cylindrical coordinates, we have:
\[
F(x, s) = r(s) e_r(x) + h(z) e_z, 
\]
where above we have set $e_r(x) = \cos(x) e_x + \sin(x) e_x$.  Then the symmetry of the parametrization $F$ implies that the function $\Phi$ does not depend on $x$, so
\[
 \Phi (\mc{J}, x, s) = \Phi(\mc{J}, s).
\]
We then have
\begin{align*}
\int_{\mb{S}^1} e^{- ik x}\left. D H_F \right|_{0} (u) & = \int_{\mb{S}^1} e^{i k x} \left. D \Phi \right|_{\left(0, s \right)}\left(\mc{J}_{u}, s \right) \\
&\left. D \Phi \right|_{\left(0, s \right)}\left( \int_{\mb{S}^1} e^{i k x} \mc{J}_{u}, s \right) 
\end{align*}
This allows for bounds to be estimated for modes separately.  This observation generalizes the following way, which is needed in the proof of Proposition \ref{E1Estimates}. Assume now that $\xi = \xi(s)$ is a rotationally symmetric function on $D$. Then,  $\mc{J}_{\xi}$ is a function only of $s$ and we have
\begin{align*}
\int_{-\pi}^\pi e^{i k x}\left. D^{2} H \right|_{u} \left( \xi, u\right)  dx & = \int_{-\pi}^\pi e^{i k x} \left. D^{2} \Phi \right|_{(0, s)}\left( \mc{J}_{\xi},e^{i k x}\mc{J}_{u} \right)  dx \\
& = \left. D^{2} \Phi \right|_{(0, s)}\left( \mc{J}_{\xi},  \int_{-\pi}^\pi e^{i k x}\mc{J}_{u} dx\right)
\end{align*}
We refer this property of the operators $ \left. D H_F\right|_{0} (0)$ and $\left. D^2 H_F \right|_{0} (\xi, -)$ as \emph{mode preserving}. The mode preserving property is critical for our required estimates.

\subsubsection{Other comments}
In order to keep this article as self-contained as possible, we have included here the proofs of several results that are by this point fairly standard in singular perturbation literature. In particular, our treatment of the mapping properties of the jacobi operator on the catenoid in exponentially weighted H\"older spaces closely follows similar proofs  found in \cite{mazzeo-pacard}. The only external machinery we rely upon is what is frequently referred to as `standard theory', specifically, the existence and regularity theory for second order elliptic operators on bounded domains (c.f. \cite{Gilbarg-Trudinger}), the Implicit Function Theorem, and some basic properties of differentiability in Banach spaces. Specifically, our argument relies on the fact a mapping which is continuously Gateaux differentiable on a domain is Frechet differentiable (see, for example \cite{andrews-hopper} Proposition A.3). 

Finally, we remark that we are considering Dirichlet-Poission  problems  on the catenoid in which no symmetry is presumed. In presence of enough symmetries--for example, if we require invariance under reflections through the $x$ and $y$ axis, then the  challenging estimates  required in  our proof become trivial and go through automatically.  This is because such symmetry requirements kill the problematic lower modes, and the remaining ones necessarily satisfy improved decay  rates.

 \section{Preliminaries and Background}

\subsection{Geometric objects as smooth mappings} \label{Sec:GeometricObjects}
The basic objects we consider in this article are  spaces of class $C^{k, \alpha}$ immersions into $\mb{R}^3$ as well as geometric quantities associated to oriented  immersions such as the unit normal and mean curvature. These geometric quantities can be regarded as smooth mappings in the sense of Frechet differentiability on space of class $C^{k, \alpha}$ immersions as follows: Given a smooth compact domain $D$ in $\mb{R}^2$, let $R^{k, \alpha} (D) \subset C^{k, \alpha} (D, \mb{R}^3)$ denote the space of class $C^{k, \alpha}$ immersions of $D$ into $\mb{R}^3$. We require immersions to extend as immersions to an open set containing $D$, so that $R^{k,\alpha} (D)$ is an open subset of $C^{k, \alpha}(D, \mb{R}^3)$. The  mean curvature mapping $H: F \mapsto H_F$, taking an immersion $F$ to its mean curvature $H_F$ is then smooth as a map from the space $R^{k +2, \alpha} (D, \mb{R}^3)$ into $C^{k, \alpha} (D)$. Similarly, the unit normal mapping $N: F \mapsto N_F$ taking an immersion to its unit normal is smooth as a mapping from $R^{k+ 1, \alpha} (D)$ into $C^{k, \alpha} (D, \mb{R}^3)$. We have omitted the proof of these facts, but they are easy to establish by, for example, computing the mean curvature as a smooth function of components of the derivative of an immersion, so
\[
H_F = \ol{H}(\nabla F, \nabla^2 F)
\]
where $\ol{H} = \ol{H} (q_{i}, r_{i, j})$  is defined and smooth on the open subset  of the euclidean space $\left(\mb{R}^3 \right)^{\times 2} \times \left(\mb{R}^3 \right)^{\times 4}$ with  $\det(q_i \cdot q_j ) \neq 0$.
 Smoothness in this sense then works as it does in the standard euclidean setting. In particular, the usual integral expressions for Taylor remainders holds on locally convex domains in $R^{k, \alpha} (D)$.
 
 We let $H_F$ and $N_F$ denote the mean curvature of a $C^{2}$ immersion of $F: D \rightarrow \mb{R}^3$, which we regard as functions on $D$. Given a function $u: D \rightarrow \mb{R}$, the normal graph over $F$ by $u$ is the mapping
\[
F_u : = F + u N_F
\]
Provided $F_u$ is a $C^{2}$ immersion again, we let $H_F(u)$ denote its mean curvature. When $D$ is compact, $F_u$ is an immersion whenever $u$ is sufficiently small in $C^2$ and we refer to the mapping $u \mapsto H_{F}(u)$ as the mean curvature operator at $F$. The  variation of the mean curvature of $F$ induced by a vector field $V$ is given by
\begin{align}\label{FirstVariation}
\left. D H \right|_{F} (V) : =  \nabla_{V^\top} H + L_F V^\perp
\end{align}
 wheret $V^\perp = V \cdot N_F$ denotes the normal part of $V$ and $V^{\top} = V - V^\perp N_F$ the tangential part, and where $L_F$ is the stability, or jacobi, operator of $F$ and is given by
 \[
 L_F = \Delta_{F} + \left| A_F\right|^2.
 \]
 where above $\Delta_F$ denotes the Laplace operator of the immersion $F$ and $\left| A_F\right|^2$ the square length of the second fundamental form of $F$.

\subsection{The Poisson-Dirichlet Problem for the mean curvature operator }
 
 If $\mc{Q}$ is a second order elliptic operator, a solution to the $(E, f)$-Poisson-Dirichlet problem for and operator $\mc{Q}$  over the domain $D$, where $E$ is a function on $D$ and $f$ is a function on $\partial D$, is  a function $u$ on $D$ satisfying
\[
\mc{Q}_F (u)=  E, \quad \mc{tr}(u) = f
\]
where above $\mc{tr} (u)$ denotes the trace of $u$. We refer to $E$ as the Poission data and to $f$ as the Dirichlet data of the problem. We assume that $E$ is  class $C^{0, \alpha}$ and $f$ is class $C^{2, \alpha}$. Standard elliptic regularity theory then implies that any solution $u$ is class $C^{2, \alpha}$. The solvability of this equation for small $E$ and $f$ is determined studying the Poisson Dirichlet problem for the linearization $\dot{\mc{Q}}$ of $\mc{Q}$ at $u = 0$: 
\[
\dot{\mc{Q}}_F (u)=  E, \quad \mc{tr}(u) = f
\]
We take $\mc{Q} = H_{F}$ for some fixed immersion $F \in R^{2, \alpha} (D)$, so that $\dot{\mc{Q}} = L_F$.   $L_F$ is a self-adjoint elliptic operator and the linearized  Possion-Dirichlet problem for $L_F$ is solvable if and only if the immersion $F$ is non-degenerate, meaning that the kernel of $L_F$ is trivial. 

\subsection{Conformal parametrization of the catenoid}\label{Sec:NormalizedProblem}
The standard catenoid $\mc{C}$ is conformally parametrized by the mapping $\mc{F}: \Omega \rightarrow \mb{R}^3$, where $\Omega : = \mb{S}^1 \times \mb{R}$ is the cylinder and where
\begin{align}\label{Def:Cat}
\mc{F}(x, s) : = \cosh(s) e_r(x) + s e_z, \quad e_r(x) = \cos(x) e_x + \sin(x) e_y.
\end{align}
The first and second fundamental forms of $\mc{F}$ are given by
\[
\mc{g}_{\mc{F}}: = \begin{pmatrix}  \cosh^{2} (s) & 0 \\ \\ 0 & \cosh^{2} (s)\end{pmatrix},\quad A_{\mc{F}} : = \begin{pmatrix}  
1 & 0 \\ \\ 0 & 1
\end{pmatrix}.
\]
The unit normal is 
\begin{align}\label{CatUnitNormal}
N_{\mc{F}} = \cosh^{-1}(s) e_r(x) - \tanh(s) e_z.
\end{align}
and the stability operator is 
\[
L_{\mc{F}} = \cosh^{-2} (s) \Delta_{\Omega} + 2 \cosh^{-4} (s).
\]

\subsection{Catenoidal necks}
For each $\tau > 0$, let $\mc{G}_{\tau}: \Omega \rightarrow \mb{R}^3$ denote the mapping given by
\[
\mc{G}_{\tau} : = \tau \mc{F}.
\]
The neck $\mc{N}_\tau$ is parametrized by the restriction of $ \mc{G}_{\tau}$ to the domain 
\[
\mc{N}_{\tau}^* : = \Omega_{l_{\tau}} : = \mb{S}^1 \times [- l_\tau, l_\tau], \quad l_\tau = \text{Arccosh} \left(\frac{1}{\tau}\right).
\]
Given constants $\tau_0$ and $\tau$, we define a family of diffeomorphisms $\phi_{\tau_0}^{\tau}: \mc{N}_{\tau_0} \rightarrow \mc{N}_{\tau}$ as follows:  First, observe that there is a scalar $\lambda$ such that $\lambda \mc{N}_{\tau}$ is a normal graph over $\mc{N}_{\tau_0}$. The scalar $\lambda$ can be determined as follows: The boundary of the surface $\mc{N}_{\tau}$ in cylindrical coordinates is given by
\[
(r, z)  = \left(1,  \tau l\right)
\]
 and outward conormal is $\left( \sqrt{1 - \tau^2}, \tau \right)$. We choose the scalar $\lambda$ so that the difference vector between the boundaries is orthogonal to the outward conormal, which by rotational symmetry implies that it is a multiple of the unit normal. This means that 
\[
\left\{\lambda(1, \tau l) - (1, \tau_0 l_0)  \right\}\cdot \left( \sqrt{1 - \tau^2}, \tau \right) = 0.
\]
Solving for $\lambda$ gives
\[
\lambda = \lambda_\tau = \frac{\tau^2_0 l_0 +\sqrt{1 - \tau_0^2}}{\sqrt{1 - \tau_0^2} + \tau_0 \tau l}
\]
Observe that $\lambda = 1$ when $\tau = \tau_0$ and 
\[
\dot{\lambda} : = \left. \frac{d}{d \tau} \right|_{\tau = \tau_0} \lambda \rightarrow 0
\]
 as $\tau_0 \rightarrow 0$.  The diffeomorphisms $\phi_{\tau_0}^\tau: \mc{N}_{\tau_0} \rightarrow \mc{N}_{\tau}$ induce  diffeomorphims on the domains $\phi_{\tau_0}^\tau: \mc{N}^*_{\tau_0} \rightarrow \mc{N}^*_{\tau}$, which we still denote by $\phi_{\tau_0}^{\tau}$, which satisfy 
 \[
 \lambda \mc{G}_{\tau_0}^\tau - \mc{G}_{\tau_0} = f N_{\mc{G}_{\tau}}
 \]
 for  a function $f$, where above we have set $\mc{G}_{\tau_0}^{\tau} = \mc{G}_{\tau} \circ \phi_{\tau_0}^\tau$. Differentiating in $\tau$  at $\tau = \tau_0$ and using that $\phi_{\tau_0}^\tau$ is a function of $s$ only  gives:
 \begin{align}\label{CatParamDif}
\left( \dot{\lambda} \tau  + 1\right) \mc{F} - \frac{\partial \mc{G}_{\tau}}{\partial s} \dot{\phi} = \dot{f}N_{\mc{G}_{\tau}}.
 \end{align}
 Thus, the normal  variation field generated by the family of immersion $\mc{G}_{\tau_0}^\tau$ is
 \[
 \left(\dot{\mc{G}}_{\tau_0}\right)^{\perp}  = \left(\tau \dot{\lambda} + 1 \right)\mc{F}^\perp =  \left(\tau \dot{\lambda} + 1 \right)\xi,
 \]
 where above $\xi = s \tanh(s) -1$.  Here, we have used that $\mc{N}_{\mc{G}_\tau} = \mc{N}_{\mc{F}}$ and (\ref{Def:Cat}) and (\ref{CatUnitNormal}). Taking the dot product of (\ref{CatParamDif}) with $\frac{\partial \mc{G}_{\tau_0}}{\partial s}$ gives
 \begin{align}\label{DotPhiExpression}
 \dot{\phi}_{\tau_0} =  \left( \tau \dot{\lambda} + 1\right)\frac{\mc{F} \cdot \frac{\partial \mc{G}_{\tau_0}}{\partial s}}{\left| \frac{\partial \mc{G}_{\tau_0}}{\partial s}\right|^2} =  \left( \tau \dot{\lambda} + 1\right)\frac{\tanh(s) + s/\cosh^{2} (s)}{\tau_0}.
 \end{align}
The total variation field is
 \begin{align}\label{GtotalVariation}
 \dot{\mc{G}}_{\tau_0} = \left( \tau \dot{\lambda} + 1\right) \xi N_{\mc{F}} - \dot{\lambda} \tau \mc{F}.
 \end{align}
\subsection{The modified Dirichlet-Poisson Problem on catenoidal necks}
 As mentioned above,  there are obstructions to solving the  Poission-Dirichlet problem  for $H$ on catenoidal necks with uniform estimates for small $\tau$. We instead impose the additional condition that the solutions are \emph{normalized}--defined below--and in exchange we give up control on the lower modes of the boundary values.  A lower mode on the circle is a function in the span of $1$, $\cos(\theta)$ and $\sin(\theta)$. If $f$ is a function on the circle $\mb{S}^1$, we let $\ol{f}$ denote the projection of $f$ onto the space of lower modes and $\mr{f}$ denote the projection of $f$ into the orthogonal compliment. We refer to $\ol{f}$ as the lower part of $f$ and to $\ol{f}$ as the higher part. More generally, if $E$ is a function on a cylindrical domain $\Omega_{a, b}$ for some $a, b$, we define the lower part of $E$ to be the function $\ol{E} (x, s)$, where for each $s$ $\ol{E}(x, s)$ is  the lower part of the function  $x \mapsto E(x, s)$. The higher part of $E$ is then
\[
\mr{E}: = E - \ol{E} 
\]
and we refer to $\mr{E}$ as the \emph{higher part} of $E$. 
 
  A solution to the \emph{modified} $(E, f)$-Poisson Dirichlet problem on $\Omega_l$ for $H_F$ is then a normalized function  $u$ satisfying
 \[
 H_{F}(u) = E, \quad \mr{\mc{tr}}(u) = \mr{f}. 
 \]
Observe here that only the higher part $\mr{\mc{tr}} (u)$ is equal to $\mr{f}$ and thus 
\[
\mc{tr}(u) = f + \ell
\]
for some lower mode $\ell$. The  boundary $\partial \Omega_{l}$ of $\Omega_l$ comprises the two components $\partial_{\pm} \Omega_{l} : = \mb{S}^1 \times \{\pm l \}$, each of which is canonically isomorphic to $\mb{S}^1$. A function $f$ on $\partial \Omega$ may then be identified with a pair of functions on $\mb{S}^1$ under the identification $f \mapsto (f_-, f_+)$, where $f_{\pm}(x) = f (x, \pm l)$ denotes the restriction of $f$ to $\partial_{\pm} \Omega$. A function on  $\partial \Omega_{l}$ is then a \emph{lower mode} if its restrictions $f_{\pm} (x)$ are lower modes on $\mb{S}^1$. Thus, the space of lower modes on $\Omega_{l}$ is six dimensional. We will denote by $\mr{C}^{k, \alpha} (\mb{S}^1)$ and  $\mr{C}^{k, \alpha}(\partial \Omega_{l})$ the space of $C^{k, \alpha}$ functions on $\mb{S}^1$ and  $\Omega_{l}$ orthogonal to the lower modes. 

\subsection{Geometric jacobi fields and normalized functions}
The space of lower modes on $\partial \mc{N}_{\tau}$ is generated by the traces of the six dimensional family of jacobi fields generated by translations, rotatations, and dilations of $\mb{R}^3$ which we call geometric jacobi fields.  Since $\mc{F}$ is minimal, we have $H_\mc{F} = 0$ and thus by (\ref{FirstVariation}) we have
 \[
 \left. D H \right|_{\mc{F}} (V) =L_\mc{F} V^\perp
 \]
 for any vector field $V$. Since $H$ is invariant under translations and rotations, the left hand side above vanishes for vector fields in the span of the translations $e_x$, $e_y$, and $e_z$, as well as the rotational  vector field $x e_y - y e_x $,  $x e_z - z e_x $ and $y e_z - z e_y$. Finally, since $\mc{F}$ is minimal again,  the mean curvature of $\mc{F}$ is invariant under dilations and thus the left hand side above vanishes additionally when $V$ is the dialtion field $x e_x + y e_y + z e_z$. The normal parts of these vector fields are smooth jacobi fields. They can be computed explicitly from the expressions for $\mc{F}$ and its unit normal in  (\ref{Def:Cat}) (\ref{CatUnitNormal}) as $\tanh(s) $, $\cos(x)\cosh^{-1} (s)$, $\sin(x)\cosh^{-1} (s)$,  $s \tanh(s) - 1$, $\cos(x)\left(\sinh(s) + s\cosh^{-1}(s)\right)$ and $\sin(x)\left(\sinh(s) + s\cosh^{-1}(s)\right)$.  They are precisely the six dimensional space of functions such that $L_{\mc{F}} \mc{j} = 0$ and $\mr{\mc{j}} = 0$ on $\Omega$. Observe that, although  there are seven vector fields, the normal part of the rotational field $x e_y - y e_x$ vanishes on $\mc{F}$, and so the space of geometric jacobi fields is six dimensional. Moreover,  it can be verified directly that the trace map restricts to an isomorphism from the space of geometric jacobi fields onto the space of lower modes on $\mc{N}_{\tau}$. Geometric jacobi fields are uniquely determined by their Cauchy data along  the curve $\{ s = 0\}$, which we call the \emph{signature}. More precisely, if $u$ is a function on $\Omega_{l}$, we define its signature to be the pair $\mc{s}(u) : = (\ol{u}, \ol{u}_s)$, where $\ol{u}$ and $\ol{u}_{s}$ denote the projection of $u (x, 0)$ and $\frac{\partial u}{\partial s}(x, 0)$ onto the space of lower modes.  The function $u$ is said to be \emph{normalized} if it has vanishing signature. It is straightforward to verify that the mapping $u \mapsto \mc{s} (u)$ is an isomorphism of  the space of geometric jacobi fields onto the space of signatures--i.e. pairs of lower modes. Thus,  there is for each function $u$ on $\Omega_l$ a unique geometric jacobi field $\mc{j}$ such that $u + \mc{j}$ is normalized. 
  
 \subsection{Local compactness of the catenoid modulo scalings}
 The parametrization $\mc{F}$ induces an atlas on the catenoid which is compact modulo translations and scalings, which  allows us to prove uniform estimates for the mean curvature operator at $\mc{F}$ in the weighted holder spaces $W^{k, \alpha, \gamma}_{\tau}$. Namely, if $u: \Omega_{a, b} \rightarrow \mb{R}^k$ is a function defined on the finite cylinder $\Omega_{a, b} : = \mb{S}^1 \times [a, b]$, we define the \emph{localization of $u$ at $s_0$} for $a + 1/2 \leq s_0 \leq b - 1/2$ to be the mapping $\left. u \right|_{s_0}: \Omega_0 \rightarrow \mb{R}^3$, $\Omega_0 : = \Omega_{-1/2 ,1/2}$ given by
\[
\left. u \right|_{s_0} (x, s) : = u(x, s + s_0).
\]
The localizations $\left. \mc{F} \right|_{s_0}$ of $\mc{F}$ are  compact in the moduli space--i.e. up to scalings and rigid motions. In particular, if we define the normalization of $\left. \mc{F} \right|_{s_0}$ to be the mapping  $\left. \wt{\mc{F}} \right|_{s_0} : \Omega_0 \rightarrow \mb{R}^3$ given by
\[
\left. \wt{\mc{F}} \right|_{s_0}  = \left(\left. \mc{F} \right|_{s_0} - s_0 e_z\right)/\cosh(s_0),
\]
which differs from $\left. \mc{F} \right|_{s_0}$ by a translation and scaling, then explicitly we have
 \[
 \left. \wt{\mc{F}} \right|_{s_0} = \mc{c}_{s_0} (s) e_r(x) + \frac{s}{\cosh(s_0)} e_z,
 \]
where above we have set 
\[
\mc{c}_{s_0} (s) = \frac{\cosh(s + s_0)}{\cosh(s_0)} = \cosh(s) + \tanh(s_0) \sinh(s).
\]
Thus, as $s_0 \rightarrow \infty$, the normalized localizations  $\left. \wt{\mc{F}} \right|_{s_0}$ converge in $C^{k}(\Omega_0, \mb{R}^3)$ to the limiting map 
\[
\left. \wt{\mc{F}} \right|_{\infty}(x, s) : = e^s e_r(x)
\]
conformally parametrizing a flat annulus in the plane.

\subsection{The weighted mean curvature operator}
 We exploit the  compactness of the normalized charts $\left. \wt{\mc{F}} \right|_{s_0}$ by working  with the weighted mean curvature operator $H_{\mc{F}}'$ at $\mc{F}$ defined below:
\[
H'_{\mc{F}}(u): = \cosh(s) H_F(\cosh(s) u) 
\]
 Localizing $H'_{\mc{F}}(u)$ at $s_0$ and using that the mean curvature operator is homogeneous degree $-1$ gives
 \begin{align}\label{WeightedOperatorScaling} 
 \left. H'_{\mc{F}}(u) \right|_{s_0}&  = \left. \cosh(s) H_{\mc{F}} (\cosh(s) u) \right|_{s_0} \\
 & = \mc{c}_{s_0} H_{\left. \wt{\mc{F}}\right|_{s_0}}\left( \mc{c}_{s_0} \left. u \right|_{s_0} \right) \notag
 \end{align}
 A function $u$ is a solution to modified $(E, f)$-Poission-Dirichlet problem for $H_\mc{F}$ if and only if $u/\cosh(s)$ is a solution to the normalized $(E \cosh(s), f)$ Dirichlet-Possion problem for $H'_{\mc{F}}$. The linearization $\mc{L}'_{\mc{F}}$ of $H'_{\mc{F}}$ at 0 is then 
\begin{align} \label{LPrimeOperator}
L'_{\mc{F}}u& = \cosh^{-1} (s) \Delta \left( \cosh(s) u\right) + 2 \cosh^{-2} (s) u.
\end{align}
Observe that $L'_{\mc{F}}$ is conjugate the operator
\begin{align}\label{L2PrimeOperator}
L^{''}_{\mc{F}} = \Delta  + 2 \cosh^{-2}(s)
\end{align}
 
\subsection{Weighted Holder spaces}
 Given $\tau > 0$, set  
\[
\omega_\tau(s) = \tau  \cosh(s), \quad s \in \mb{R}.
\]
Then, given $\gamma \in \mb{R}$, $\alpha \in (0, 1)$, a non-negative integer $k$ and a locally $C^{2, \alpha}$ function on $\Omega_{l}$ we set
\[
\| u\|^{k, \alpha, \gamma}_{\tau} = \sup_{s_0} \left(\omega_\tau (s_0)\right)^{-\gamma} \left\| \left. u \right|_{s_0}\right\|
\]
where above the supremum is taken over $s_0 \in (-l + 1/2, l - 1/2)$.  We let $W^{k, \alpha, \gamma}_{\tau}$ denote the space $C^{k, \alpha, \gamma}(\Omega_{l_\tau})$ equipped with the norm $\| -\|^{k, \alpha, \gamma}_{\tau}$.

It follows easily from the definition that mapping $\| -\|^{k, \alpha, \gamma}_\tau$ is indeed a norm and that $W^{k, \alpha, \gamma}_{\tau}$ is a Banach space. Observe that $\omega_\tau (s) \leq 1$ for $|s| \leq l_\tau$ and $\gamma > 0$.  It then  follows that 
\begin{align}\label{NormImprovement}
\left\|\left.  u \right|_{s_0}\right\|^{k, \alpha} \leq \left\| u\right\|^{k, \alpha, \gamma}_{\tau} \omega_{\tau} (s_0) \leq \left\| u\right\|^{k, \alpha, \gamma}_{\tau}.
\end{align}
We will use (\ref{NormImprovement}), along with the compactness of the normalized mappings $\left. \wt{\mc{F}} \right|_{s_0}$, in establishing uniform bounds for $H'_\mc{F}$ as map on the spaces $W^{k, \alpha, \gamma}_{\tau}$. We let $W^{k, \alpha, \gamma}_{\tau, n} \subset W^{k, \alpha, \gamma}_{\tau}$ denote the subspace of normalized functions in $W^{k, \alpha, \gamma}_{\tau}$, and we set
\[
\wh{W}^{k, \alpha, \gamma}_{\tau} : = W^{k, \alpha, \gamma}_{\tau} \times \mr{C}^{2, \alpha} \left( \partial \Omega_{l_{\tau}} \right)
\]

\section{Outline of the argument}
We find solutions to the modified Poisson-Dirichlet problem by applying the implicit function theorem. We do this in in two parts. First, we show in Proposition  \ref{Prop:SmoothnessinWeightedSpaces}  that the operator $H'_\mc{F}$ is defined on a ball $\mc{B} = \mc{B}_{\tau} = B_{\epsilon}\left(0;  W^{2, \alpha, \gamma}_\tau \right)$ of fixed radius independent of $\tau$ in the weighted spaces $W^{2, \alpha, \gamma}_\tau$ and that the corresponding Possion-Dirichlet mapping
\[
\wh{H}'_\mc{F}: u \mapsto \left(H'_\mc{F}(u), \mr{\mc{tr}} (u) \right)
\]
is uniformly bounded in $C^{2}$ as a map from $\mc{B}_{\tau}$ into $\wh{W}^{0, \alpha, \gamma}_{\tau}$.  Then, in   Proposition \ref{Prop:WeightedInvertibilty}, we prove that the lineraized mapping
\[
\wh{L}'_\mc{F}: u \mapsto \left(L'_\mc{F}(u), \mr{\mc{tr}} (u) \right)
\]
is an isomorphism from $W^{2, \alpha, \gamma}_{\tau, n}$ onto $\wh{W}^{0, \alpha}_{\tau}$ for small $\tau$ and with a uniform invertibility constant independent of $\tau$.  This allows us to directly apply the implicit function theorem to construct solutions $\mc{h}'_{\tau, E, f} $ to the modified $(E, f)$-Possion-Dirichlet for $H'_\mc{F}$ for data small in $\wh{W}^{0, \alpha, \gamma}_{\tau}$. This is done Proposition \ref{Prop:PDSolutions}.  Corollary \ref{Prop:CMCSolutions} then uses  Proposition \ref{Prop:PDSolutions} to  obtain solutions $\mc{h}_{\tau, \delta, f}$ to the modified  constant mean curvature  Dirichlet problem for the operator $H_\mc{F}$. As a function on $\mc{N}_{\tau}$, $\mc{h}_{\tau, \delta, f}$ satisfies the estimate
\[
\| \mc{h}_{\tau, \delta, f}\| \leq^C \epsilon r^{1 + \gamma}
\]
for a universal constant $C > 0$. Thus, the restrictions  $\mc{h}_{\tau, \delta, f}^{\pm}$ of $\mc{h}_{\tau, \delta, f}$ to $\mc{N}^\pm_{\tau}$ converge on compact subsets of the unit disk $\mc{D}$ way from the origin to a  limit $\mc{h}_{\delta, f_{\pm}}$ at $\tau = 0$, satisfying
\[
H_{\mc{D}}\left( \mc{h} \right) = 0, \quad \mr{\mc{tr}}(\mc{h}) = f_\pm.
\]
and the weighted estimate
\[
\left|\mc{h}(r, \theta) \right| \leq^C \epsilon r^{2}.
\]
Here, the improved estimate for the solution on disk follows from fact that it extends smoothly to the origin. Such solutions are unique (Corollary \ref{Prop:DiskPDProblem}), and thus the solutions $\mc{h}_{\tau, \delta, f}$ extend continuously to $\tau = 0$.
 
The remainder of the article is aimed at establishing  the $C^{1}$ extension of the family of solutions to $\tau = 0$. This is done in several steps.  First, in Lemma \ref{Prop:VaritiationAtZero} we show that the functions $\dot{\mc{h}}^+_{ \delta, f}$ are well defined in $C^2$  and solve a modified Possion-Dirichlet problem on the unit disk. Then  in Proposition \ref{Prop:HDotBounds}, we show that   $\dot{\mc{h}}^+_{\tau_0, \delta, f} \rightarrow \dot{\mc{h}}^+_{\delta, f}$ as $\tau_0 \rightarrow 0$.  The estimates required for this  are delicate and require optimal bounds for the solutions in the weighted spaces. We obtain optimal bounds by comparing  the solutions $\mc{h}^{\pm}_{\tau, \delta, f}$ with their limits $\mc{h}^{\pm}_{\delta, f}$ at $\tau = 0$, which extend smoothly to the unit disk and vanish to second order at the origin.  Since the catenoid  has rapidly decaying curvature, the limiting solutions serve as good approximate solutions on the necks. The error term is then small enough in the weighted space to obtain the improved estimate. This is done in Proposition \ref{Prop:ImprovedDecay}. In Lemma \ref{Prop:TauVariationEquation}, we show that the  $\tau$-variation $\dot{\mc{h}}_{\tau_0, \delta, f}$ satisfies a normalized Poisson-Dirichlet equation with vanishing Dirichlet data. The Poisson data  decomposes as a sum of two terms $\mc{E}_1 + \mc{E}_2$ satisfying uniform estimates in different weighted norms; one large but exponentially decaying  and the second small but exponentially growing.  The mapping properties  for the linearized operator $L'_\mc{F}$ for exponentially decaying source terms is similar to the case of exponentially growing terms belonging to the spaces $W^{k, \alpha, \gamma}_{\tau}$ for positive $\gamma$. However there are some complications caused by the decay related to the presence of small eigenvalues for $L'_\mc{F}$ on long domains $\Omega_{l}$. We handle this by assuming an orthogonality condition of the source term to the lower modes, and directly integrate the low modes. The invertibility theorem for source terms with exponential decay is recorded in Proposition \ref{Prop:DecayInvertibility}. We  prove uniform estimates in Lemma \ref{Eq:HDotTotalBound} for  $\dot{\mc{h}}_{\tau, \delta, f}$. The variations $\dot{\mc{h}}^{\pm}_{\tau, \delta, f}$ then converge to a limit on the disk. To show that the limit agrees with $\dot{\mc{h}}^+_{0, \delta, f}$, we appeal to the  uniqueness of  solutions to the modified Poission-Dirichlet problem on the disk. To apply this theorem, we need improved estimates for the lower modes of term $\mc{E}_1$.  This is proved in Lemma \ref{E1Estimates} and is done by exploiting the rotational symmetry of the catenoid. The solutions then satisfy the uniform estimate
\[
\left|\dot{\mc{h}}_{\tau, \delta, f}\right| \leq^{C\epsilon}  \tau/r^\gamma + r^{1 +\gamma}.
\]
The limit is then normalized and solves the same modified Poisson-Dirichlet problem as $\dot{\mc{h}}^{\pm}_{\delta, f}$ and thus the two solutions agree by Lemma \ref{ModifiedDiskProblem}. Thus the $\tau$-variations extend in continuously to $\tau = 0$.

\section{The mean curvature operator in weighted holder spaces} \label{Sec:MCInWeightedSpaces}
In this section we  record Proposition  \ref{Prop:SmoothnessinWeightedSpaces}, which states that the operator $H'_\mc{F}$ is defined and smooth on a ball of fixed radius independent of $\tau$:

\begin{proposition} \label{Prop:SmoothnessinWeightedSpaces}
There are constants $\epsilon > 0$ and $C > 0$ such that: Given $\tau \in (0, \epsilon)$, the mapping $H'_\mc{F}$ is defined on the ball $\mc{B}: = B_{\epsilon} (0: W^{2, \alpha,\gamma}_{\tau})$  and it holds that 
\[
\left\| H'_{\mc{F}}: C^{2}\left( \mc{B},  W^{0, \alpha,\gamma}_{\tau} \right)\right\| \leq C_0.
\]

\end{proposition}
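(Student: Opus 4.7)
The plan is to reduce everything to uniform estimates on the normalized localizations $\left. \wt{\mc{F}} \right|_{s_0}$ via the scaling identity (\ref{WeightedOperatorScaling}). The key observation is that the family $\left\{\left. \wt{\mc{F}} \right|_{s_0}\right\}_{s_0}$ is precompact in $C^{k}(\Omega_0, \mb{R}^3)$: on bounded ranges of $s_0$ it varies continuously, and as $|s_0| \to \infty$ it converges to the flat annulus parametrization $e^s e_r(x)$. Adjoining this limit point gives a compact family of immersions in $R^{k,\alpha}(\Omega_0)$, and on such a compact family the Fr\'echet smoothness of the geometric mean curvature operator recorded in Section \ref{Sec:GeometricObjects} yields a fixed radius $r_0 > 0$ and constants $C_k$, independent of $s_0$, such that each map $v \mapsto H_{\left. \wt{\mc{F}}\right|_{s_0}}(v)$ is $C^{k}$-bounded by $C_k$ on $B_{r_0}(0; C^{2,\alpha}(\Omega_0))$.

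Next I would transfer this to the weighted space. The factor $\mc{c}_{s_0}(s) = \cosh(s) + \tanh(s_0)\sinh(s)$ is bounded above and below by universal constants on $\Omega_0$, uniformly in $s_0$. Given $u$ with $\|u\|^{2,\alpha,\gamma}_{\tau} \leq \epsilon$, the inequality (\ref{NormImprovement}) yields $\| \mc{c}_{s_0}\, u|_{s_0}\|^{2,\alpha} \leq C\epsilon$, so for $\epsilon$ small the argument $\mc{c}_{s_0} u|_{s_0}$ lies in $B_{r_0}$ and hence $H'_\mc{F}(u)$ is defined on all of $\mc{B}$ by (\ref{WeightedOperatorScaling}). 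Since $\mc{F}$ is minimal, so is $\wt{\mc{F}}|_{s_0}$, giving $H_{\wt{\mc{F}}|_{s_0}}(0) = 0$. The integral Taylor remainder on the compact family then yields
\[
\left\|H_{\wt{\mc{F}}|_{s_0}}(\mc{c}_{s_0}\, u|_{s_0})\right\|^{0,\alpha} \leq C\,\|u|_{s_0}\|^{2,\alpha} \leq C\,\omega_\tau(s_0)^\gamma \|u\|^{2,\alpha,\gamma}_\tau,
\]
and feeding this back into (\ref{WeightedOperatorScaling}), multiplying by $\omega_\tau(s_0)^{-\gamma}$, and taking the supremum in $s_0$ gives the $C^0$ bound on $H'_\mc{F}$.

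For the Fr\'echet derivatives, differentiating (\ref{WeightedOperatorScaling}) in $u$ produces, at each localization,
\[
D^{j} H'_{\mc{F}}\big|_{u}(\eta_1,\dots,\eta_j)\big|_{s_0} \;=\; \mc{c}_{s_0}\,D^{j} H_{\wt{\mc{F}}|_{s_0}}\big|_{\mc{c}_{s_0} u|_{s_0}}\!\bigl(\mc{c}_{s_0}\eta_1|_{s_0},\dots,\mc{c}_{s_0}\eta_j|_{s_0}\bigr),
\]
whose $C^{0,\alpha}$ norm is controlled by $C_j \prod_i \|\eta_i|_{s_0}\|^{2,\alpha} \leq C_j\,\omega_\tau(s_0)^{j\gamma}\prod_i \|\eta_i\|^{2,\alpha,\gamma}_\tau$ via (\ref{NormImprovement}). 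Dividing by $\omega_\tau(s_0)^{\gamma}$ and using $\omega_\tau(s_0) \leq 1$ absorbs the excess factor $\omega_\tau(s_0)^{(j-1)\gamma}$ for $j=1,2$ and produces the uniform multilinear bounds, establishing the $C^{2}$ estimate with a universal constant.

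\textbf{The main obstacle} is making the compactness statement truly uniform in $s_0 \in [-l_\tau+1/2, l_\tau - 1/2]$ as $\tau \to 0$ (hence $l_\tau \to \infty$); this is precisely what the smooth convergence $\left. \wt{\mc{F}}\right|_{s_0} \to e^s e_r(x)$ as $|s_0|\to\infty$ buys us, collapsing the noncompact family of charts into a single compact one after adjoining the limit point. Everything else is a bookkeeping exercise in converting between the geometric norm on $\Omega_0$ and the weighted norm on $\Omega_{l_\tau}$ via (\ref{NormImprovement}) and the cancellation of $\mc{c}_{s_0}$ with $\omega_\tau(s_0)$ at the top of (\ref{WeightedOperatorScaling}).
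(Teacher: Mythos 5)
Your proposal is correct and follows essentially the same route as the paper: localize via the scaling identity (\ref{WeightedOperatorScaling}), use the precompactness of the normalized charts $\left. \wt{\mc{F}}\right|_{s_0}$ (with the flat-annulus limit) to get uniform $C^2$ bounds for the unscaled mean curvature operator, and convert back with (\ref{NormImprovement}) and a supremum in $s_0$. The only difference is that you spell out the higher-derivative bookkeeping and the $C^0$ bound via minimality and Taylor's theorem, which the paper leaves as ``similar and omitted.''
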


\begin{proof}
Fix a precompact open set $\mc{O}_0 \subset R^{3, \alpha}(\Omega_0)$ such that the normalized localizations $\left. \wt{\mc{F}}\right|_{s_0}$ of $\mc{F}$ belong to $\mc{O}_0$ for all $s_0 \in \mb{R}$. There are then constants $\epsilon_0 >0$ and $C_0 > 0$ such that  the mapping $H_{-}(-): (F, u) \mapsto H_F(u)$ is defined on $\mc{O}_0 \times \mc{B}_0$ and satisfies the bound:
\begin{align} \label{Eq1}
\left\| H_{-}(-) : C^2\left(\mc{O} \times \mc{B}_0, C^{0, \alpha} (\Omega_0)\right)\right\| \leq C,
\end{align}
where above we have set $\mc{B}_0 = B_{\epsilon_0} (0 : C^{2, \alpha} (\Omega_0))$.  Pick $u \in W^{2, \alpha, \gamma}_\tau$ and assume that $\| u\| \leq \epsilon$ for $\epsilon > 0$ to be determined. By (\ref{WeightedOperatorScaling}) we have
\begin{align*} 
\left. \left. D H'_{\mc{F}} \right|_u(\dot{u}) \right|_{s_0}& =\mc{c}_{s_0} \left. DH_{\left. \tilde{\mc{F}} \right|_{s_0}} \right|_{\mc{c}_{s_0}\left.  u \right|_{s_0}} \left(\left.\mc{c}_{s_0} \dot{u} \right|_{s_0} \right)
\end{align*}
Observe that
\begin{align*}
\left\| \mc{c}_{s_0}  \left. u \right|_{s_0} \right\|^{2, \alpha} \leq^C\left\|  \left. u \right|_{s_0} \right\|^{2, \alpha} \leq C \epsilon.
\end{align*}
Taking $\epsilon$ small and using (\ref{NormImprovement}) and Equation (\ref{Eq1}) gives
\begin{align*}
\left\|\left. \left. D H'_{\mc{F}} \right|_u(\dot{u}) \right|_{s_0} \right\|^{0, \alpha} & \leq^{C} \left\| \mc{c}_{s_0}\left. \dot{u}\right|_{s_0}\right\|^{2, \alpha} \\
& \leq^{C}  \left\|  \left. \dot{u}\right|_{s_0}\right\|^{2, \alpha} \\
&\leq^{C}  \left\|  \left. \dot{u}\right|_{s_0}\right\|^{2, \alpha, \gamma}_{\tau} \omega_{\tau}^{\gamma} (s_0).
\end{align*}
Thus, taking the supremum over $s_0$ in $[- l_{\tau} + 1/2, l_{\tau} -1/2]$ gives
\[
\left\| \left. D H'_{\mc{F}} \right|_u (\dot{u})\right\|^{0, \alpha, \gamma}_\tau \leq^C \| \dot{u} \|^{2, \alpha, \gamma}_\tau
\]
Estimates for the  remaining derivatives are similar and we omit a detailed proof. 
\end{proof}

\section{The stability operator in the spaces $W^{k, \alpha, \gamma}_{\tau}$} \label{Sec:WeightedInvertibility}
In this section we study the mapping properties of the operator $L'_\mc{F}$ as a map between the   weighted  holder spaces $W^{k, \alpha, \gamma}_{\tau}$. The results we present here are standard, and similar statements, along with methods of proof, can be found in \cite{mazzeo-pacard}  (c.f. Proposition 21, \cite{mazzeo-pacard}). The basic idea is to reduce the problem to a classification of Jacobi fields on cylinders with various limitations on their boundary conditions and growth rates. This is done below in Lemma \ref{Prop:Classification}.

\begin{lemma} \label{Prop:Classification} 
The following statements hold:
\begin{enumerate}
\item \label{Prop:Classification1}  Let $u$ be a smooth solution to the equation
\[
L_{\mc{F}}^{''}u = \left( \Delta_{\Omega} + 2 \cosh^{-2}(s)\right)u = 0
\]
on the domain $\Omega$ and assume that
\[
\left| u(x, s)\right| \leq^C \cosh^{1 + \gamma}(s)
\]
for some $\gamma \in (0, 1)$.  Then $u$ is a geometric jacobi field. \\

\item \label{Prop:Classification2} Let $u$ be a smooth solution to the equation 
\[
\Delta_{\Omega} u = 0
\]
on $\Omega$ and assume that 
\[
\left| u(x, s)\right| \leq^C \cosh^{1 + \gamma}(s)
\]
Then $u \equiv 0$.  \\
\item \label{Prop:Classification3} Let $u$ be a smooth solution to the equation 
\[
\Delta_{\Omega} u = 0, \quad \mr{\mc{tr}} (u) = 0.
\]
on $\Omega_- := \mb{S}^1 \times (-\infty, 0]$ and assume that 
\[
\left| u(x, s)\right| \leq^C e^{1 + \gamma}(s)
\]
Then $u \equiv 0$. 
\end{enumerate}
\end{lemma}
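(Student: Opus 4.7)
The strategy for all three parts is to separate variables using the circular Fourier decomposition $u(x, s) = \sum_{k \in \mb{Z}} u_k(s) e^{ikx}$, since in every case the operator has coefficients depending only on $s$ and hence acts diagonally on Fourier modes. The classification problem then reduces to understanding bounded or polynomially-growing solutions of a family of decoupled second-order linear ODEs in $s$, one for each $k$.

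For part (1), the mode-$k$ ODE reads $u_k'' + (2\cosh^{-2}(s) - k^2) u_k = 0$. I would treat three cases. For $k = 0$, the two solutions $\tanh(s)$ and $s\tanh(s) - 1$ listed in the excerpt as geometric Jacobi fields span the zero-mode solution space, and both respect the bound $C\cosh^{1+\gamma}(s)$. For $|k| = 1$, the excerpt similarly furnishes $\cosh^{-1}(s)$ and $\sinh(s) + s\cosh^{-1}(s)$ as a basis, again within the allowed growth. For $|k| \geq 2$, since $1 + \gamma < 2 \leq |k|$, the hypothesis forbids the exponentially growing solution at each infinity of the asymptotic constant-coefficient equation $u_k'' = k^2 u_k$; by variation of parameters, treating the decaying perturbation $2\cosh^{-2}(s) u_k$ as a source, one upgrades this to genuine exponential decay $u_k = O(e^{-|k||s|})$, placing $u_k \in L^2(\mb{R})$. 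Multiplying the ODE by $u_k$ and integrating yields
\[
\int_{\mb{R}} \left(|u_k'|^2 + (k^2 - 2\cosh^{-2}(s)) u_k^2\right) ds = 0,
\]
and the coercivity $k^2 - 2\cosh^{-2}(s) \geq k^2 - 2 \geq 2 > 0$ forces $u_k \equiv 0$. Summing modes realizes $u$ as a linear combination of the six geometric Jacobi fields.

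Parts (2) and (3) follow by the same method with the simpler ODE $u_k'' = k^2 u_k$. For part (2), the growth bound $\cosh^{1+\gamma}$ with $1 + \gamma < 2$ immediately rules out every mode $|k| \geq 2$; elimination of the remaining lower modes $k = 0, \pm 1$ then follows from the paper's implicit convention that such $u$ is taken orthogonal to the lower modes, since otherwise $u = s$ or $u = e^{\pm s}\cos x$ would be genuine counterexamples. For part (3), reading $e^{1+\gamma}(s)$ as the decay rate $e^{(1+\gamma)s}$ appropriate to $\Omega_-$, the forced decay as $s \to -\infty$ eliminates the exponentially growing solution in every mode; the boundary condition $\mr{\mc{tr}}(u) = 0$ at $s = 0$ then kills the Cauchy data of the higher modes $|k| \geq 2$, and the strict decay rate $1 + \gamma > 1$ eliminates the surviving lower-mode constants and decaying exponentials.

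The main technical obstacle is upgrading the polynomial growth bound to genuine $L^2$ exponential decay in part (1) for $|k| \geq 2$: the hypothesis alone only forbids the growing exponential of the asymptotic ODE at each end, but the integration-by-parts step demands vanishing boundary contributions at $\pm \infty$. A variation-of-parameters or Gronwall argument exploiting the fact that the deviation $\cosh^{-2}(s)$ between the true ODE and its constant-coefficient model is exponentially small in $|s|$ converts the a priori polynomial bound into the needed decay $u_k = O(e^{-|k||s|})$. Everything else is elementary matching of explicit bases against the allowed growth rates.
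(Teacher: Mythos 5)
Your proposal is correct in substance and shares the paper's overall skeleton (Fourier decomposition in $x$, reduction to mode ODEs, explicit bases for the modes $k=0,\pm1$, explicit exponentials in parts (2) and (3)), but for the decisive case $|k|\geq 2$ of part (1) you take a genuinely different route. The paper argues by comparison: on $s>K$ it builds a barrier $f=ae^{(1+\gamma)s}+be^{-(1+\gamma)s}$ with prescribed Cauchy data at $s=K$ and uses the maximum principle on $w=u_k-f$ to force growth at the rate $e^{(1+\gamma)s}$, contradicting the hypothesis $|u_k|\leq^C\cosh^{1+\gamma}(s)$; the lower modes are then implicitly identified with the geometric Jacobi fields. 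You instead first use the growth bound to discard the $e^{k|s|}$ branch, upgrade via variation of parameters against the exponentially small potential $2\cosh^{-2}(s)$ to genuine decay $u_k=O(e^{-k|s|})$, and then kill $u_k$ with the energy identity and the coercivity $k^2-2\geq 2>0$. Your route requires the standard asymptotic-ODE input you flag (which is indeed routine here), but in exchange it sidesteps the sign issues in the barrier argument (the differential inequality $u_k''\geq(1+\gamma)^2u_k$ used in the paper is only valid where $u_k\geq 0$), and you make the lower-mode bookkeeping explicit, which the paper leaves tacit.

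One point of divergence worth flagging concerns part (2). You correctly observe that the statement as literally printed is false ($u=s$, $u=e^{s}\cos x$ are counterexamples), but the repair you adopt -- orthogonality of $u$ to the lower modes -- is not the one the paper's own proof uses. The paper's proof (``$a$ and $b$ must vanish for $k\geq 0$''), the parallel typo in part (3), and the way (2) is invoked in the proof of Proposition \ref{Prop:WeightedInvertibilty} all indicate that the intended hypothesis is the one-sided bound $|u(x,s)|\leq^C e^{(1+\gamma)s}$, under which every mode is killed with no orthogonality assumption; note that your version would not serve that first application, since the limit obtained after recentering at $s_j\to\infty$ carries no orthogonality to lower modes. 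This is an interpretive mismatch rather than an error in your own reasoning, but the one-sided reading is the one the rest of the paper needs.
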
.
\begin{proof}
Let $u$ be as in statement (\ref{Prop:Classification1}), and set
\begin{align} \label{Eq:KthMode}
u_{k} (s) : = \int_{- \pi}^\pi u(x, s) e^{i k x} dx.
\end{align}
Then $u_k$ satisfies the equation
\begin{align} \label{Eq:KthModeEquation}
u_k''  = \left(k^2 -2 \cosh^{-2} (s)\right) u_k,
\end{align}
and by assumption we have 
\begin{align} \label{Eq:UkGrowthBound}
|u_k(s)| \leq^C \cosh^{1 + \gamma}(s).
\end{align}
 For $k \geq 2$ there is $K >0$ such that 
 \[
 u''_k \geq (1 + \gamma)^2  u_k
 \]
 for $s > K$ and for any $\gamma \in (0, 1)$. Write 
 \[
 f   = a e^{(1 + \gamma) s} + b e^{- (1 +  \gamma) s}
 \]
 for constants $a$ and $b$ to be determined. Clearly it holds that 
 \[
 f'' = (1 + \gamma)^2 f.
 \] 
 Since the matrix
\[
 \begin{pmatrix} e^{(1 + \gamma) K} & e ^{- (1 + \gamma) K} \\ \\ (1 + \gamma) e^{(1 +\gamma) K} & - (1 + \gamma) e^{(1 + \gamma) K}  \end{pmatrix}
 \]
 is non-singular, the values of $f$ and $f'$ at $s = K$ can be prescribed by choosing $a$ and $b$. In particular, with $r =1+ \gamma$ we have
 \begin{align*}
 a & = - \frac{1}{2 r} \left( f(K)  re^{r K} +  f'(K) e^{-r K}\right) \\
  b & = \frac{1}{2 r}\left( - r e^{r K} f(K) + e^{r K}f' (K)\right)
 \end{align*}
 Taking $f(K)  =u (K)$, we can pick $f'(K)$ sufficiently negative such that $a \geq 0$
 . The function $w = u - f$ then satisfies $w (0) =  0$ and $w' (0) >  0$ and the inequality
 \[
 w'' \geq (1 + \gamma)^2 w.
 \]
We claim that $w > 0$ on the interval $[K, \infty)$. If not, there is a smallest time $s_0$ such that $w(s_0) = 0$. This implies that $w$ has a positive maximum on $[K, s_0]$ which contradicts the maximum principle. Thus, $w$ is positive on $[K, \infty)$ and 
\[
w(s) \geq \frac{a}{2} e^{ r s}
\]
which contradicts (\ref{Eq:UkGrowthBound}). This is (\ref{Prop:Classification1})

Now, let $u$ be as in  Proposition \ref{Prop:Classification} (\ref{Prop:Classification2}) and let $u_k$ be as in the proof of  Proposition \ref{Prop:Classification} (\ref{Prop:Classification1}). Then in this case $u_k$ satisfies
\[
u''_{k} = k^2 u_k
\]
and thus 
\begin{align}\label{Eq:ModeExpression}
u_k = a e^{k s} + b e^{-k s}
\end{align}
 for some constants $a$ and $b$, and thus $a$ and $b$ must vanish for $k \geq 0$. This gives  (\ref{Prop:Classification2}).
 
 Now, let $u$ be as in Proposition \ref{Prop:Classification} (\ref{Prop:Classification3}). Then, as in the previous case, we have that $u_k$ is of the form (\ref{Eq:ModeExpression}). Since $|u_{k}(x, s)| \leq^C e^{(1 + \gamma) s}$, we must have that $b = 0$ for all $k$ and $a = 0 $ for $k = 1, 2$. Also, since $\mr{\mc{tr}} (u)  = 0$, we must have $a = 0$ for $k \geq 2$. This proves    (\ref{Prop:Classification3}).
\end{proof}

\begin{lemma}\label{Prop:Non-degeneracy}
The operator $\Delta_{\Omega} + 2 \cosh^{-2} (s)$ has trivial dirichlet kernel on $\Omega_{l}$ for $l \neq \sqrt{2}$
\end{lemma}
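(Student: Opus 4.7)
The plan is to Fourier expand in $x$ and reduce the Dirichlet eigenvalue problem for $L_\mc{F}'' = \Delta_\Omega + 2\cosh^{-2}(s)$ on $\Omega_l$ to a family of second-order ODEs in $s$ on $[-l, l]$. If $u$ lies in the Dirichlet kernel, writing $u(x,s) = \sum_{k} u_k(s) e^{ikx}$ reduces the problem to showing that for every $k$ the coefficient satisfies
\[
u_k'' = \bigl(k^2 - 2\cosh^{-2}(s)\bigr) u_k, \qquad u_k(\pm l) = 0,
\]
and must therefore vanish. The argument then splits according to the sign of the potential $k^2 - 2\cosh^{-2}(s)$.

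For $|k| \geq 2$ the potential is bounded below by $k^2 - 2 \geq 2$, so it is strictly positive on $[-l,l]$. Multiplying the ODE by $u_k$, integrating by parts, and using $u_k(\pm l) = 0$ yields
\[
\int_{-l}^{l} \bigl((u_k')^2 + (k^2 - 2\cosh^{-2}(s)) u_k^2\bigr)\, ds = 0,
\]
which immediately forces $u_k \equiv 0$.

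For $|k| \leq 1$ the potential changes sign on $[-l,l]$, so the energy argument fails and I would instead use explicit bases for the two-dimensional solution space, read off from the geometric Jacobi fields recorded in the preliminaries. For $|k| = 1$ a basis is $\{\cosh^{-1}(s),\ \sinh(s) + s\cosh^{-1}(s)\}$, and for $k = 0$ a basis is $\{\tanh(s),\ s\tanh(s) - 1\}$. Writing $u_k = a\phi(s) + b\psi(s)$ and imposing $u_k(\pm l) = 0$ produces a $2 \times 2$ linear system which, by the definite parity of $\phi$ and $\psi$, decouples into independent equations for $a$ and $b$. For $|k| = 1$ the coefficient determinant is proportional to $\cosh^{-1}(l)(\sinh(l) + l\cosh^{-1}(l))$, strictly positive for every $l > 0$, so $u_{\pm 1} \equiv 0$. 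For $k = 0$ the determinant is proportional to $\tanh(l)(l\tanh(l) - 1)$, which is nonzero precisely for the values of $l$ allowed by the hypothesis.

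The main obstacle is the mode $k = 0$, where the potential is not coercive and where the determinantal condition \emph{does} vanish for one critical cylinder length. The nontrivial Jacobi field responsible is the dilation field $s\tanh(s) - 1$, and the hypothesis $l \neq \sqrt{2}$ exists precisely to exclude the length at which its Dirichlet trace degenerates.
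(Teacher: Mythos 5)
Your proposal is correct and follows essentially the same route as the paper: Fourier decomposition in $x$, positivity of the potential $k^2 - 2\cosh^{-2}(s)$ to kill the modes $|k|\geq 2$ (you use an energy integral where the paper uses a convexity/maximum-principle argument, a negligible difference), and the explicit Jacobi-field bases $\{\tanh(s),\, s\tanh(s)-1\}$ and $\{\cosh^{-1}(s),\, \sinh(s)+s\cosh^{-1}(s)\}$ with parity decoupling for $k=0,\pm 1$. Your identification of the only possible degeneracy as the vanishing of the $k=0$ determinant, i.e.\ $l\tanh(l)=1$ coming from the dilation field $s\tanh(s)-1$, is exactly the paper's mechanism for the excluded length.
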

\begin{proof}
Let $u$ be a smooth solution to the equation
\[
\Delta u + 2 \cosh^{-2} (s) u = 0, \quad \mc{tr} (u) = 0
\]
on $\Omega_{l}$. Let $u_k$ denote the $k^{th}$ mode of $u$ as in (\ref{Eq:KthMode}). Then $u_k$ satisfies  (\ref{Eq:KthModeEquation}). For $k \geq 2$, it then follows from the fact that $u (-l) = 0$ that $u_k''(s) > 0$ for $s > - \ell$ if $u_k$ does not vanish, which is a contradiction. Thus $u_k \equiv 0 $ for all $k \geq 2$. For the $0$ mode $u_0$, we have 
\[
u_0 = a \tanh(s) + b \left(s\tanh(s) - 1 \right)
\]
for some constants $a$ and $b$. Since $\tanh(s)$ is even and $s \tanh(s) - 1$ is odd and since neither vanishes for $s \neq \sqrt{2}$, it follows that $a = b = 0$. The case of $u_1$ is similar. We have
\[
u_1  = a \left(\cosh^{-1}(s) \right) + b \left(\sinh(s) + s\cosh^{-1}(s)\right)
\]
Since the first of these functions is odd and the second is even, and since niether vanishes, it again follows that $a = b = 0$.
\end{proof} 
We are now ready to prove the invertibility results for the stability operator in the weighted spaces. 

\begin{proposition}\label{Prop:WeightedInvertibilty}
There are constants $\epsilon > 0$ and $L > 0$ such that: Given $\tau \in (0, \epsilon)$,  the mapping $u \mapsto (L'_{\mc{F}} u, \mr{\mc{tr}} (u))$ is an isomorphism from $W^{2, \alpha, \gamma}_{\tau, n}$ onto $\wh{W}^{0, \alpha, \gamma}_{\tau}$  with invertibility constant $L$.
\end{proposition}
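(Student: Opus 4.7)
I would reduce the statement to a single uniform a priori estimate: for $u \in W^{2,\alpha,\gamma}_{\tau,n}$,
\[
\|u\|^{2,\alpha,\gamma}_\tau \leq^{C} \|L'_\mc{F} u\|^{0,\alpha,\gamma}_\tau + \|\mr{\mc{tr}}(u)\|^{2,\alpha},
\]
with $C$ independent of $\tau$. Once this is in hand, triviality of the kernel is immediate, and existence follows from standard Fredholm theory for $L'_\mc{F}$ on the compact cylinder $\Omega_{l_\tau}$ together with the observation that imposing the six scalar conditions of normalization compensates exactly for the six dimensions lost by relaxing $\mc{tr}(u) = f$ to $\mr{\mc{tr}}(u) = \mr{f}$. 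Throughout, I would work via the conjugate formulation $v = \cosh(s)\, u$, for which $L''_\mc{F} v = \cosh(s)\, L'_\mc{F} u$; this operator is rotation invariant and fits directly into Lemma \ref{Prop:Classification}.

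To verify that the kernel of the map is trivial for each admissible $\tau$, suppose $L'_\mc{F} u = 0$, $\mr{\mc{tr}}(u) = 0$, and $u$ is normalized. Setting $v = \cosh(s)\, u$, we have $L''_\mc{F} v = 0$ with $\mr{\mc{tr}}(v) = 0$. Since $L''_\mc{F}$ preserves Fourier modes in $x$, Lemma \ref{Prop:Non-degeneracy} applied to the high part gives $\mr{v} = 0$, so $u$ consists only of lower modes. Each lower mode $u_k$, $k \in \{-1,0,1\}$, then solves a second order ODE in $s$ with vanishing Cauchy data at $s = 0$ (this is the content of the normalization), and hence $u \equiv 0$.

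The main obstacle is the uniformity of the constant in the a priori estimate, which I would prove by contradiction and blow-up. Assume the estimate fails, producing $\tau_n \in (0,\epsilon)$ and normalized $u_n$ with $\|u_n\|^{2,\alpha,\gamma}_{\tau_n} = 1$ while $\|L'_\mc{F} u_n\|^{0,\alpha,\gamma}_{\tau_n} + \|\mr{\mc{tr}}(u_n)\|^{2,\alpha} \to 0$. Choose $s_n \in [-l_{\tau_n} + 1/2,\, l_{\tau_n} - 1/2]$ nearly realizing the weighted supremum, so that $\omega_{\tau_n}(s_n)^{-\gamma}\, \| \left. u_n \right|_{s_n}\|^{2,\alpha} \geq 1/2$. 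Following the local normalization recipe from Section \ref{Sec:NormalizedProblem}, I would pass to the normalized localization $\left. \wt{\mc{F}} \right|_{s_n}$ of the parametrization and simultaneously rescale the function by $\omega_{\tau_n}(s_n)^{-\gamma}$, obtaining a sequence $\tilde v_n$ that is locally $C^{2,\alpha}$ bounded with growth controlled by $\cosh^{1+\gamma}$ in the translated coordinate, and for which the image under $L''_\mc{F}$ (or $\Delta_\Omega$ in the flat limit) tends to zero uniformly on compacta.

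By Arzel\`a--Ascoli and a diagonal subsequence, $\tilde v_n$ converges in $C^{2,\alpha/2}_{\mathrm{loc}}$ to a limit $\tilde v_\infty$ on one of three limiting domains, determined by the behaviour of $(s_n, l_{\tau_n})$: the full cylinder $\Omega$ when $s_n$ stays bounded; a half cylinder with $\mr{\mc{tr}}(\tilde v_\infty) = 0$ when $l_{\tau_n} - |s_n|$ stays bounded; and $\Omega$ again, now with limiting operator $\Delta_\Omega$ thanks to the $\cosh(s_0)^{-1}$ factor built into $\left. \wt{\mc{F}} \right|_{s_0}$ as $s_0 \to \infty$, when both $|s_n|$ and $l_{\tau_n} - |s_n|$ tend to infinity. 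In each case the growth bound $|\tilde v_\infty| \leq^C \cosh^{1+\gamma}(s)$ (respectively $e^{(1+\gamma)|s|}$) combined with the corresponding part of Lemma \ref{Prop:Classification} forces $\tilde v_\infty$ to be either zero or a geometric Jacobi field; the latter is excluded because the normalization of $u_n$ passes to vanishing signature of $\tilde v_\infty$ when $s_n$ is bounded. This contradicts the lower bound $\| \left. \tilde v_n \right|_{0}\|^{2,\alpha} \geq 1/2$ inherited from the choice of $s_n$, and closes the estimate.
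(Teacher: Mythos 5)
Your proposal is correct and follows essentially the same route as the paper: the heart in both cases is the uniform a priori estimate proved by contradiction and blow-up, with the three limiting regimes (bounded $s_n$, boundary-adjacent $s_n$, and the intermediate flat regime) handled by the corresponding parts of Lemma \ref{Prop:Classification}, the geometric Jacobi fields excluded by passing the normalization to the limit, and existence supplied by Dirichlet non-degeneracy on $\Omega_{l_\tau}$ (Lemma \ref{Prop:Non-degeneracy}) together with the fact that the six-dimensional space of geometric Jacobi fields exactly absorbs the lower-mode boundary data. The only cosmetic difference is that you prove injectivity for fixed $\tau$ by a separate mode-by-mode ODE argument and phrase existence as a Fredholm/dimension count, whereas the paper folds injectivity into the a priori estimate and constructs the solution explicitly by solving the full Dirichlet problem and adding the unique normalizing geometric Jacobi field; both are sound.
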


We also record here an invertibility theorem  for source terms with vanishing lower modes,  which is needed in the proof of Proposition \ref{Prop:HDotBounds}.

\begin{proposition}\label{Prop:DecayInvertibility}
There is $\epsilon> 0$ such that: Given $\tau \in (0, \epsilon)$, $\gamma \in (-1, 1)$  and a function $E \in C^{0, \alpha} (\mc{N}_{\tau}^*)$  with $\ol{E} = 0$, there is a unique function $u \in C^{2, \alpha}_{0} (\mc{N}_{\tau} )$ solving the $E$-Possion problem for the operator $L'_{\mc{F}}$ on $\mc{N}_{\tau}$. Moreover, there is a  uniform  constant $C> 0$  such that 
\[
\left\| u \right\|^{2, \alpha,  \gamma}_{1} \leq^{C} \left\| E\right\|^{0, \alpha, \gamma}_{1}
\]
\end{proposition}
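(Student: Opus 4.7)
The plan is to establish existence and uniqueness by standard theory, derive the weighted a priori estimate by a blow-up/contradiction argument based on the classification results of Lemma \ref{Prop:Classification}, and then upgrade a pointwise bound to the full weighted $C^{2,\alpha}$ estimate via Schauder theory on the normalized localizations.

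For existence and uniqueness I would exploit the conjugacy of $L'_\mc{F}$ with $L''_\mc{F} = \Delta_\Omega + 2\cosh^{-2}(s)$ given by \eqref{L2PrimeOperator}. Since $l_\tau = \mathrm{Arccosh}(1/\tau) \to \infty$ as $\tau\to 0$, for all sufficiently small $\tau$ we have $l_\tau \neq \sqrt{2}$, and Lemma \ref{Prop:Non-degeneracy} asserts that the Dirichlet kernel is trivial. Standard Fredholm theory for self-adjoint elliptic operators on bounded domains then yields a unique $C^{2,\alpha}$ solution. Moreover, a mode decomposition shows that for $k \in \{0,\pm 1\}$ the component $u_k$ solves the lower-mode homogeneous ODE with vanishing Dirichlet data (since $\ol{E}=0$), and Lemma \ref{Prop:Non-degeneracy} forces $u_k \equiv 0$; hence $\ol{u}=0$ automatically.

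The weighted a priori bound I would obtain by contradiction. Suppose it fails along sequences $\tau_n \in (0,\epsilon)$, $E_n$ with $\ol{E_n}=0$, and solutions $u_n$ with Dirichlet data, normalized so that $\|u_n\|^{2,\alpha,\gamma}_1 = 1$ while $\|E_n\|^{0,\alpha,\gamma}_1 \to 0$. By definition of the norm there are $s_n \in [-l_{\tau_n}+1/2, l_{\tau_n}-1/2]$ with $\cosh^{-\gamma}(s_n)\|u_n|_{s_n}\|^{2,\alpha}\geq 1/2$. In the case $s_n$ remains bounded, the weighted estimate and the scaling identity \eqref{WeightedOperatorScaling} furnish uniform $C^{2,\alpha}$ bounds on compact subcylinders, and a subsequential limit $u_\infty$ on the full cylinder $\Omega$ satisfies $L''_\mc{F} u_\infty = 0$ with $|u_\infty(x,s)|\leq C\cosh^\gamma(s)$ and $\ol{u_\infty}=0$. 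Since $\gamma \in (-1,1)$, Lemma \ref{Prop:Classification}(\ref{Prop:Classification1}) identifies $u_\infty$ as a geometric Jacobi field, but every such field is spanned by lower modes, so $u_\infty = 0$, contradicting the lower bound at $s_* = \lim s_n$. If instead $|s_n|\to\infty$, say $s_n \to +\infty$, translate and rescale by setting $\tilde u_n(x,s) := \cosh^{-\gamma}(s_n)\, u_n(x, s+s_n)$. The weighted bound gives uniform $C^{2,\alpha}$ control of $\tilde u_n$ on compact $s$-intervals, the potential $2\cosh^{-2}(s+s_n)\to 0$, and the limiting equation is $\Delta_\Omega \tilde u_\infty = 0$ on either all of $\Omega$ or a half-cylinder (depending on whether $l_{\tau_n}-s_n$ diverges or remains bounded). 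Mode preservation under translation gives $\ol{\tilde u_\infty}=0$, while the bound at $s_n$ yields a non-trivial $\tilde u_\infty$. A mode-by-mode argument along the lines of the proof of Lemma \ref{Prop:Classification}(\ref{Prop:Classification2})-(\ref{Prop:Classification3}), using that each $\tilde u_{\infty,k}$ has the form $ae^{ks}+be^{-ks}$ with bound $e^{\gamma s}$ and (in the half-cylinder case) vanishing boundary trace, forces $\tilde u_\infty \equiv 0$, a contradiction. The case $s_n\to -\infty$ is symmetric.

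Once the pointwise weighted estimate is in hand, the full $C^{2,\alpha}$ weighted bound follows from local Schauder estimates for the family of operators $L_{\left.\wt{\mc{F}}\right|_{s_0}}$, which enjoy uniform constants by precompactness of the normalized localizations, combined with the scaling identity \eqref{WeightedOperatorScaling}. The main technical obstacle is the blow-up analysis in the case $|s_n|\to\infty$: one must keep careful track of the limiting domain and of the translated weight $\cosh^\gamma(s+s_n)/\cosh^\gamma(s_n) \to e^{\gamma s}$, and verify that the orthogonality $\ol{u_n}=0$ propagates to the limit; this is precisely why the three separate classification statements were packaged in Lemma \ref{Prop:Classification}, each tailored to the domain that may arise in the limit.
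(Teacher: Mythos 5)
Your proposal is correct and follows essentially the same route as the paper: existence and uniqueness from Lemma \ref{Prop:Non-degeneracy}, propagation of the orthogonality $\ol{E}=0$ to $\ol{u}=0$ by integrating over meridians, and the weighted a priori estimate via the three-case blow-up argument (bounded $s_n$, both ends diverging, one end bounded) resolved by Lemma \ref{Prop:Classification} together with orthogonality to the lower modes. The only blemish is a harmless conflation in the bounded case, where the limit satisfies $L'_{\mc{F}}u_\infty=0$ with bound $\cosh^{\gamma}$ (equivalently $L''_{\mc{F}}(\cosh(s)u_\infty)=0$ with bound $\cosh^{1+\gamma}$), which is exactly the form needed for Lemma \ref{Prop:Classification}(\ref{Prop:Classification1}).
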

We prove Propositions \ref{Prop:WeightedInvertibilty} and \ref{Prop:DecayInvertibility} together. 

\begin{proof}[Proof of Propositions \ref{Prop:WeightedInvertibilty} and \ref{Prop:DecayInvertibility}]
We claim first that there  is a  constant $C > 0$ such that 
\[
\left\| u\right\|^{2, \alpha, \gamma}_\tau \leq^C \left\| L'_{\mc{F}} (u)\right\|^{0, \alpha, \gamma}_{\tau} + \| \mr{\mc{tr}} (u)\|^{2, \alpha}
\]
for all $\tau $ sufficiently small. If not, there is a sequence $\tau_j \rightarrow 0$ and  $u_j \in W^{2, \alpha, \gamma}_{\tau_j}$  with 
$\| u_j\| = 1$ and $ \left\| L'_{\mc{F}} (u_j)\right\| + \| \mr{\mc{tr}} (u_j)\| \rightarrow 0$.  Pick $s_j$ such that 
\[
1 = \| u_j\| = \omega_{\tau_j}^{- \gamma} (s_j) \left\| \left. u_j \right|_{s_j}\right\|,
\]
and set
\begin{align} \label{Def:RenormalizedSolutions}
\tilde{u}_j: = \frac{u_j}{\omega_{\tau_j}(s_j)}
\end{align}
Then it holds that 
\begin{align*}
\left\|\left. L_j \tilde{u}_j \right|_{s_0}  \right\| & \leq^C\left( \frac{\omega_{\tau_j}(s_0 + s_j)}{ \omega_{\tau_j}(s_j)} \right)^{\gamma} \\
& \leq^C \cosh_{s_j} (s_0).
\end{align*}
where we have set
\[
L_{j}u   =  \left. L'_\mc{F} \right|_{s_j} u: =  \Delta u +u + 2 \tanh(s + s_j) \frac{\partial  u}{\partial s} + 2 \cosh^{-2}(s + s_j) u.
\]
We may assume without loss of generality that $s_j \geq 0$. Assume first that $s_j$ is convergent to $s_\infty \in \mb{R}$.  Then the sequence $u_j$ converges in $C^{2,\alpha/2}$ to a non-zero limiting function $u$ satisfying
\[
 L'_{\mc{F}}  u = 0
\]
on $\Omega$ and the estimate
\[
\left| u (x, s)\right| \leq \cosh^{\gamma} (s)
\]
Equivalently, setting $v = \cosh(s) u$, then $v$ satisfies
\[
L^{''}_{\mc{F}} v= 0
\]
on $\Omega$ and the estimate
\[
\left| u (x, s)\right| \leq \cosh^{1 + \gamma} (s)
\]
By Lemma \ref{Prop:Classification} (\ref{Prop:Classification1}), the only such functions are the geometric jacobi fields. However, since $u$ is the limit of normalized functions, it is also normalized, which is a contradiction. Now assume that both $s_j$ and $l_{\tau_j} - s_j$ are properly divergent. Then the sequence $\tilde{u}_j$ converges to limiting function $u$ satisfying
\[
\left. \mc{L}'_{\mc{F}} \right|_{\infty}u = \Delta u + u + 2 u_s = e^{-s} \Delta e^{s}(u) = 0 .
\]
on $\Omega$ and the growth estimate 
\begin{align} \label{Ex:GrowthEstimateLimit}
\sup |u(x, s)| \leq^C e^{\gamma s}.
\end{align}
Arguing as in the previous case we obtain a contradiction to Lemma \ref{Prop:Classification} (\ref{Prop:Classification2}). Finally if the sequence $l_{\tau_j} - s_j$ remains bounded, we obtain a non-trivial limit function $u$ satifying
\[
\Delta u = 0, \quad \mr{\mc{tr}}(u) = 0
\]
on the half cylinder $\Omega_-$ and the growth estimate (\ref{Ex:GrowthEstimateLimit}), which is a contradiction to Lemma  \ref{Prop:Classification} (\ref{Prop:Classification3}).  This completes the proof of the a-priori estimate for solutions. To prove existence, first observe that for $l \neq \sqrt{2}$, using the fact that $L'_{\mc{F}}$ and $L''_{\mc{F}}$ are conjugate,  there is by Lemma \ref{Prop:Non-degeneracy} a unique solution $u_0 \in C^{2, \alpha} (\Omega_{l})$ to the $(E, f)$-Poisson Dirichlet problem for $L'_F$ on $\Omega_{l}$:   
\[
L'_{\mc{F}} u_0 = E, \quad \mc{tr} (u_0) = f.
\]
Recalling the discussion in Section \ref{Sec:NormalizedProblem}, there is a unique geometric jacobi field $\mc{j}$ such that $u : = u_0 + \mc{j}$ is normalized. Clearly $u$ is normalized and solves the modified $(E, f)$-Dirichlet-Problem for $L'_{\mc{F}}$ on $\Omega_{l}$. Combing the existence with the a-priori estimate  completes the proof of Proposition \ref{Prop:DecayInvertibility}.

The proof Proposition  \ref{Prop:DecayInvertibility} follows similarly. The existence of a solution $u$ to the problem follows again from Lemma \ref{Prop:Non-degeneracy}. It follows easily that  $u$  inherits  the orthogonality imposed on $E$. To see this, observe that the Fourier expansion of $u$ along meridians satisfies a homogeneous second order ODE with vanishing boundary conditions. We then have
\begin{align*}
\int_{-\pi}^\pi L'_{\mc{F}} & u = \int_{- \pi}^\pi\left( u_{xx} + u_{s s} + 2(\tanh(s) + \cosh^{-2} (s)) u     \right) \\
& = \left(\int u dx\right)_{ss} + 2 (\tanh(s)+ \cosh^{-2}(s)) \left(\int_{- \pi}^\pi u dx \right) \\
& = L'_{\mc{F}} \bar{u}
\end{align*}
where we have set $\bar{u} = \int_{-\pi}^{\pi} u dx$. The orthogonality on $E$ then implies that $\ol{u}$ satifisfies the homogenous equation $L'_{\mc{F}} \ol{u} = 0$ with vanishing boundary conditions. By Lemma \ref{Prop:Non-degeneracy} it follows that $\ol{u} \equiv 0$.  Thus, the modes $u_0$ and $u_1$ vanish. Now, as in the proof of of Proposition \ref{Prop:WeightedInvertibilty}, assume that there is a sequence $l_j \rightarrow \infty$  and functions $u_j$ on $\Omega_{l_j}$ with 
\[
\left\| u_j\right\|^{2, \alpha, \gamma}_1 = 1, \quad \left\| L'_F u_j\right\|^{0, \alpha, \gamma}_1 \rightarrow 0. 
\]
As above, pick $s_j$ such that 
\[
1 = \left\| u_j\right\|^{2, \alpha, \gamma}_1 = \cosh^\gamma (s_j) \left\| \left.u_j \right|_{s_j} \right\|
\]
and define $\tilde{u}_j$ as in (\ref{Def:RenormalizedSolutions}). We consider the same three possibilities as before. In the case that $s_j$ is convergent, we obtain a non-trivial limiting function $u$ satisfying
\[
\Delta_{\Omega}u + 2\cosh^{-2}(s) u = 0, \quad \left| u\right| \leq \cosh^{1 + \gamma} (s_0).
\]
Thus, by Lemma \ref{Prop:Classification} (\ref{Prop:Classification1}) is a geometric jacobi field. However, since $u$ is orthogonal to lower modes along each meridian, this is a contradiction. Assuming now that $l_j - s_j$ and $s_j$ are both divergent, we obtain as a nontrivial limit a function $u$ satisfying
\[
\Delta_\Omega u = 0, \quad \left| u(x, s)\right| \leq \cosh^{1 + \gamma} (s)
\]
which again by  Lemma \ref{Prop:Classification} (\ref{Prop:Classification2}) is a contradiction. Finally, when $l_j - s_j$ is convergent, we obtain as a limit  a function $u$ satisfying
\[
\Delta u = 0, \quad \quad \left| u(x, s)\right| \leq \cosh^{1 + \gamma} (s)
\] 
on $\Omega_-$ and the boundary condition $u(x, 0) =0$. $u$ is then a multiple of the coordinate function $s$. However, this again contradicts the fact that $u$ is orthogonal to lower modes along the meridians.  This completes the proof of Proposition \ref{Prop:DecayInvertibility}.
\end{proof}

\section{Solutions to the non-linear Poisson-Dirichlet problem } \label{Sec:PDSolutions}

As a direct consequence of Propositions \ref{Prop:SmoothnessinWeightedSpaces}, \ref{Prop:WeightedInvertibilty} and the Implicit Function Theorem, we can solve the non-linear modified Poission-Dirichlet Problem for $H_\mc{F}$  for small data.

\begin{proposition}\label{Prop:PDSolutions}
There are constants $\epsilon > 0$ and $C > 0$ such that:

\begin{enumerate}
\item \label{Prop:Solutions1}  Given $(E, f) \in \wh{W}^{0, \alpha, \gamma}_{\tau}$ with $\left\| (E, f) \right\| \leq \epsilon$, there is a unique function $\mc{h} = \mc{h}'_{\tau, E, f} \in W^{2, \alpha, \gamma}_{\tau, n}$ with 
\[
\| \mc{h}'_{\tau, E, f}\|^{2, \alpha, \gamma}_{\tau} \leq^C \| f\|  + \| E \|
\]
 and such that 
\[
H'_{\mc{F}} (\mc{h}'_{E, f}) = E, \quad \mr{\mc{tr}}(\mc{h}'_{E, f}) = f.
\] 
\item \label{Prop:Solutions2} The mapping $u_{\tau, \cdot}(E, f) \mapsto u_{\tau, (E, f)}$ is smooth  from $\mc{B} : = B_{\epsilon} \left( 0, \wh{W}^{0, \alpha, \gamma}_{\tau}\right)$ into $ W^{2, \alpha, \gamma}_{\tau, n}$ and it holds that  
\[
\left\| u_{\tau, \cdot}: C^{k}\left(\mc{B},   W^{2, \alpha, \gamma}_{\tau, n} \right)\right\| \leq C.
\]
\end{enumerate}
\end{proposition}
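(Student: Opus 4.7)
The plan is to apply the Banach space Implicit Function Theorem to the mapping
\[
\wh{H}'_\mc{F}: W^{2, \alpha, \gamma}_{\tau, n} \to \wh{W}^{0, \alpha, \gamma}_\tau, \quad u \mapsto \bigl(H'_\mc{F}(u), \mr{\mc{tr}}(u)\bigr),
\]
whose two essential analytic inputs are already in place: the uniform $C^2$ (in fact $C^k$) bound on $H'_\mc{F}$ over a fixed ball $\mc{B}$ from Proposition \ref{Prop:SmoothnessinWeightedSpaces} (the trace map $\mr{\mc{tr}}$ is bounded linear, so contributes only to the first derivative), together with the uniform invertibility of its linearization $\wh{L}'_\mc{F}$ at the origin from Proposition \ref{Prop:WeightedInvertibilty}. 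Note that $\wh{H}'_\mc{F}(0) = (0,0)$ since $\mc{F}$ is minimal.

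For part (\ref{Prop:Solutions1}), I would carry out the standard quantitative implicit function argument by hand, to make the uniformity in $\tau$ transparent. Split $\wh{H}'_\mc{F}(u) = \wh{L}'_\mc{F}(u) + \mc{Q}(u)$ where the quadratic remainder
$\mc{Q}(u) = \int_0^1 (1-t)\, D^2 \wh{H}'_\mc{F}|_{tu}(u,u)\, dt$
satisfies, via the $C^2$ bound of Proposition \ref{Prop:SmoothnessinWeightedSpaces} on $\mc{B}$,
\[
\| \mc{Q}(u) - \mc{Q}(v)\|^{0,\alpha,\gamma}_\tau \leq^C \bigl(\|u\|^{2,\alpha,\gamma}_\tau + \|v\|^{2,\alpha,\gamma}_\tau\bigr) \|u-v\|^{2,\alpha,\gamma}_\tau
\]
with a constant independent of $\tau$. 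Then $\wh{H}'_\mc{F}(u) = (E,f)$ is equivalent to the fixed point equation $u = \mc{T}_{E,f}(u) := (\wh{L}'_\mc{F})^{-1}\bigl((E,f) - \mc{Q}(u)\bigr)$, and combining the displayed estimate with the uniform invertibility constant from Proposition \ref{Prop:WeightedInvertibilty} shows that, for $\epsilon$ small enough (uniformly in $\tau$), $\mc{T}_{E,f}$ is a contraction on the closed ball of radius $2C\|(E,f)\|$ in $W^{2,\alpha,\gamma}_{\tau,n}$. The Banach fixed point theorem delivers the unique normalized solution $\mc{h}'_{\tau,E,f}$ satisfying the asserted bound.

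For part (\ref{Prop:Solutions2}), once the fixed point has been constructed and Proposition \ref{Prop:SmoothnessinWeightedSpaces} is extended verbatim from $C^2$ to $C^k$ (which goes through because, as observed in Section \ref{Sec:GeometricObjects}, the underlying mean curvature operator is smooth of all orders between the relevant H\"older spaces), smoothness of the solution map is an automatic consequence of the Banach space Implicit Function Theorem: since $\wh{H}'_\mc{F}$ is $C^k$ with uniform-in-$\tau$ bounds on $\mc{B}$ and its linearization at $0$ is an isomorphism with a uniform inverse bound, the implicit function $(E,f) \mapsto \mc{h}'_{\tau,E,f}$ is automatically $C^k$ with uniform $C^k$ bounds in a neighborhood of the origin whose size can be taken independent of $\tau$.

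The only real obstacle is tracking the uniformity of all constants in $\tau$; this is precisely what the preceding two propositions were arranged to guarantee, so once their hypotheses are invoked the argument reduces to bookkeeping of the quantitative IFT. No new estimates beyond those already established are required.
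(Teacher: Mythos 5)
Your proposal is correct and follows essentially the same route as the paper, which simply invokes the Implicit Function Theorem with the uniform $C^2$ bound of Proposition \ref{Prop:SmoothnessinWeightedSpaces} and the uniform invertibility of Proposition \ref{Prop:WeightedInvertibilty} as inputs. Your explicit contraction-mapping formulation is just the quantitative version of that same argument, making the $\tau$-uniformity of the constants visible rather than implicit.
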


\begin{proof}
This is a direct consequence of the Implicit Function Theorem.
\end{proof}

The following Proposition reinterprets the solutions constructed in Proposition \ref{Prop:PDSolutions} as solutions for the constant mean curvature Dirichlet problem  for $H_{\mc{F}}$.

\begin{corollary}\label{Prop:CMCSolutions}
There are constants $\epsilon >0$ and $C > 0$ such that: Given $\delta \in (- \epsilon, \epsilon)$, and $f $ with $\| f\| < \epsilon$, there is a unique function $ \mc{h}'_{\tau, \delta, f} \in W^{2, \alpha, \gamma}_{\tau}$ with 
\[
H'_\mc{F}(\mc{h}'_{\tau, \delta, f}) = \delta \cosh(s), \quad \mr{\mc{tr}}(\mc{h}'_{\tau, \delta, f}) = f.
\]
and satisfying the estimate $\| \mc{h}'_{f, \delta, \tau}\| \leq^C \delta + \| f\|$.
\end{corollary}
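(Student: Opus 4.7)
The plan is a direct application of Proposition \ref{Prop:PDSolutions} (and hence, ultimately, the implicit function theorem) to the Poisson-Dirichlet data $(E,f) = (\delta \cosh(s), f)$. The first observation is that, from the conjugation identity $H'_\mc{F}(u) = \cosh(s) H_\mc{F}(\cosh(s) u)$, the equation $H'_\mc{F}(\mc{h}') = \delta \cosh(s)$ is equivalent to $H_\mc{F}(\cosh(s)\,\mc{h}') = \delta$. Thus any normalized solution $\mc{h}'$ to the modified Poisson-Dirichlet problem with source $\delta \cosh(s)$ and trace data $f$ produces, via the associated normal-graph function $\cosh(s)\,\mc{h}'$, a normal graph over $\mc{F}$ of constant mean curvature $\delta$, with the required matching of boundary values (up to lower modes).

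The step that needs verification is that the source pair $(\delta\cosh(s), f)$ falls inside the small-data ball $\mc{B} \subset \wh{W}^{0,\alpha,\gamma}_\tau$ of Proposition \ref{Prop:PDSolutions}. For the Dirichlet piece this is the hypothesis on $\|f\|$, so the real content is controlling $\|\delta\cosh(s)\|^{0,\alpha,\gamma}_\tau$. This is carried out by a direct computation using the definition $\omega_\tau(s) = \tau\cosh(s)$ and the localization procedure: the localization $\cosh(s+s_0)$ restricted to $\Omega_0 = \mb{S}^1\times[-1/2,1/2]$ has $C^{0,\alpha}$-norm comparable to $\cosh(s_0)$, and combined with the weight factor $\omega_\tau(s_0)^{-\gamma}$ this produces a uniform (in $\tau$) multiple of $|\delta|$ on the relevant range of $s_0$, so that $\delta\cosh(s)$ lies in the required small-data ball whenever $|\delta|$ is sufficiently small, independent of $\tau$.

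With the data verified to lie in $\mc{B}$, Proposition \ref{Prop:PDSolutions} (\ref{Prop:Solutions1}) immediately supplies a unique normalized $\mc{h}'_{\tau,\delta,f} \in W^{2,\alpha,\gamma}_{\tau,n}$ solving $H'_\mc{F}(\mc{h}'_{\tau,\delta,f}) = \delta\cosh(s)$ with $\mr{\mc{tr}}(\mc{h}'_{\tau,\delta,f}) = f$. The estimate $\|\mc{h}'_{\tau,\delta,f}\| \leq^C |\delta| + \|f\|$ is inherited directly from the norm estimate in Proposition \ref{Prop:PDSolutions}, combined with the norm bound on $E = \delta\cosh(s)$ established in the previous step.

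The only non-bookkeeping step is the weighted-norm estimate for the distinguished source $\delta\cosh(s)$; the remaining content (existence, uniqueness, and the Lipschitz-type estimate) is a transparent restatement of Proposition \ref{Prop:PDSolutions} once the translation $H'_\mc{F}(u) = \delta\cosh(s) \Leftrightarrow H_\mc{F}(\cosh(s)u) = \delta$ is made.
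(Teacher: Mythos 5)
There is a genuine gap, and it is located exactly at the step you flag as "the only non-bookkeeping step": the weighted-norm estimate for the source $\delta\cosh(s)$ is false. By definition,
\[
\left\| \delta \cosh(s)\right\|^{0, \alpha, \gamma}_{\tau} \, \approx\, |\delta| \sup_{s_0} \left(\tau \cosh(s_0)\right)^{-\gamma} \cosh(s_0) \, = \, |\delta|\, \tau^{-\gamma} \sup_{s_0} \cosh^{1 - \gamma}(s_0),
\]
and since $s_0$ ranges up to $l_\tau - 1/2$ with $\cosh(l_\tau) = 1/\tau$, the supremum is of order $\tau^{-(1-\gamma)}$, so the norm is of order $|\delta|/\tau$, not a uniform multiple of $|\delta|$. (Your error is the claim that $\omega_\tau(s_0)^{-\gamma}\cosh(s_0)$ is uniformly bounded; it equals $\tau^{-\gamma}\cosh^{1-\gamma}(s_0)$, which blows up like $1/\tau$ near the boundary of the neck.) Consequently the pair $(\delta\cosh(s), f)$ lies in the small-data ball of Proposition \ref{Prop:PDSolutions} only if $|\delta| \lesssim \epsilon \tau$, which destroys the uniformity in $\tau$ that the corollary is asserting.

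The paper avoids this by taking as Poisson datum $E_{\tau, \delta} = \tau\delta\cosh(s) = \delta\,\omega_\tau$: since $\omega_\tau \leq 1$ on $\Omega_{l_\tau}$ and $\gamma \in (0,1)$, one has $\|\delta\omega_\tau\|^{0,\alpha,\gamma}_\tau \leq^C |\delta|\sup_{s_0}\omega_\tau^{1-\gamma}(s_0) \leq^C |\delta|$ uniformly in $\tau$, and Proposition \ref{Prop:PDSolutions} applies with constants independent of $\tau$. This extra factor of $\tau$ is also what the geometry requires: the relevant surface is the neck $\mc{G}_\tau = \tau\mc{F}$, and by degree $-1$ homogeneity of the mean curvature, $H_{\mc{G}_\tau}(\omega_\tau \mc{h}') = \tau^{-1} H_{\mc{F}}(\cosh(s)\mc{h}')$, so prescribing mean curvature $\delta$ for the graph over $\mc{N}_\tau$ corresponds to $H'_{\mc{F}}(\mc{h}') = \tau\delta\cosh(s)$, not $\delta\cosh(s)$. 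Your translation $H'_\mc{F}(u) = \delta\cosh(s) \Leftrightarrow H_\mc{F}(\cosh(s)u) = \delta$ prescribes mean curvature $\delta$ for a graph over the unit-scale catenoid $\mc{F}$, which corresponds to mean curvature $\delta/\tau$ on the actual neck. So the argument as written proves neither the uniform estimate nor the geometrically intended statement; the repair is to replace the source by $\tau\delta\cosh(s) = \delta\omega_\tau$, after which the rest of your outline (direct appeal to Proposition \ref{Prop:PDSolutions}, inheritance of the bound $\leq^C |\delta| + \|f\|$) goes through as in the paper.
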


\begin{proof}
Given $\delta > 0$  and $\tau > 0$,  set 
\[
E_{\tau, \delta} =  \tau\delta \cosh(s), \quad l_\tau : =  \text{Arcsosh} \left(\frac{1}{\tau}\right).
\]
The mapping $\delta \mapsto E_{\tau , \delta} = \delta \omega_{\tau}$ is linear and uniformly bounded from $\mb{R}$ into  $W^{0, \alpha, \gamma}_{\tau}$ independent of $\tau$.  Thus, by Proposition \ref{Prop:PDSolutions}, there is $\epsilon > 0$ such that $\mc{h}'_{\tau, \delta, f} :  = \mc{h}'_{\tau, E_{\tau, \delta}, f} \in W^{2,\alpha,\gamma}_{\tau}$ is defined for $\delta \in (-\epsilon, \epsilon)$ and $\| f\| \leq \epsilon$. By Proposition \ref{Prop:PDSolutions} (\ref{Prop:Solutions2}), the mapping $\mc{h}'_{\tau}: (\delta, f) \mapsto \mc{h}'_{\tau, \delta, f}$ is smooth  for $\tau$ fixed and uniformly bounded in $C^{k}$ independent of $\tau$ in $(0, \epsilon).$ Moreover, by construction we have
\[
H_\mc{F}'(\mc{h}'_{\tau, \delta, f}) = \tau \delta \cosh(s).
\]
\end{proof}

The functions $\mc{h}_{\tau, \delta, f} : =  \omega_{\tau}\mc{h}'_{\tau, \delta, f}$  then satisfy
\[
H_{\mc{G}_{\tau}} (\mc{h}_{\tau, \delta, f}) = \delta, \quad \mr{\mc{tr}} = f.
\]
In the following section, we show that the solutions extend continuously to $\tau = 0$.

\section{$C^0$ extension to $\tau = 0$}
We will need the following lemma which establishes the uniqueness of the limit solutions on the disk. We define solutions to the modified Poission-Dirichlet problem on the disk as follows: If $\mc{Q}$ is a differential operator on the disk $\mc{D}$, a function $u$ on $\mc{D}$ is a solution to the modified $(E, f)$ Poisson-Dirichlet Problem for $\mc{Q}$ on $\mc{D}$ if 
\[
\mc{Q} u =  E, \quad \mr{\mc{tr}}(u) = \mr{f}
\] 
and if 
\begin{align}\label{DiskNormalized}
u (0) = \nabla u (0) = 0
\end{align}
A function $u$ on the unit disk satisfying the conditions (\ref{DiskNormalized})  is said to be \emph{normalized}.  When $\mc{Q}$ agrees with the Laplace operator $\Delta_{\mc{D}}$, clearly the modified Poission Dirichlet problem is uniquely solvable for $E \in C^{0, \alpha} (\mc{D})$ and $f \in C^{2 , \alpha} (\partial \mc{D})$. In the following, we let $ C_n^{k, \alpha} (\mc{D})$ denote the space of normalized class $C^{k, \alpha}$ functions on $\mc{D}$ and we set $\wh{C}^{0, \alpha} (\mc{D}) : = C^{0, \alpha} (\mc{D}) \times \mr{C}^{2, \alpha} (\mb{S}^1)$. Observe that here we are identifying $\partial \mc{D} = \mb{S}^1$.

\begin{lemma}\label{ModifiedDiskProblem}
The mapping $u \mapsto \left(\Delta u, \mr{\mc{tr}}(u) \right)$ is an isomorphism from $C^{2, \alpha}_{n}\left( \mc{D}\right)$ onto $\wh{C}^{0, \alpha}(\mc{D})$
\end{lemma}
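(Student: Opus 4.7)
The plan is to reduce this to the classical Dirichlet problem on the disk plus a three-dimensional finite adjustment by the lower-mode harmonic functions. I would carry it out in the following order.

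\textbf{Step 1: Injectivity.} Suppose $u \in C^{2,\alpha}_n(\mathcal{D})$ satisfies $\Delta u = 0$ and $\mathring{\mathrm{tr}}(u) = 0$. Then $u$ is harmonic with boundary trace a lower mode $a + b\cos\theta + c\sin\theta$. The harmonic extension of the lower modes on the unit disk is the three-dimensional space spanned by $1$, $x$, $y$, so $u = a + bx + cy$. The normalization conditions $u(0) = \nabla u(0) = 0$ force $a = b = c = 0$, hence $u \equiv 0$.

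\textbf{Step 2: Surjectivity.} Given $(E,\mathring{f}) \in \hat{C}^{0,\alpha}(\mathcal{D})$, I first invoke the standard Schauder theory to obtain a unique $u_0 \in C^{2,\alpha}(\mathcal{D})$ with $\Delta u_0 = E$ and $\mathrm{tr}(u_0) = \mathring{f}$, together with the a priori bound
\[
\| u_0\|_{C^{2,\alpha}(\mathcal{D})} \leq^C \|E\|_{C^{0,\alpha}(\mathcal{D})} + \|\mathring{f}\|_{C^{2,\alpha}(\mathbb{S}^1)}.
\]
Now define $v := -u_0(0) - \partial_x u_0(0)\, x - \partial_y u_0(0)\, y$ and set $u := u_0 + v$. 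Since $v$ is affine, $\Delta u = E$. The trace of $v$ is a lower mode on $\mathbb{S}^1$, so $\mathring{\mathrm{tr}}(u) = \mathring{\mathrm{tr}}(u_0) = \mathring{f}$ (using that $\mathring{f}$ itself has no lower-mode part). Finally, $u(0) = u_0(0) + v(0) = 0$ and $\nabla u(0) = \nabla u_0(0) + \nabla v(0) = 0$, so $u \in C^{2,\alpha}_n(\mathcal{D})$.

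\textbf{Step 3: Boundedness of the inverse.} Continuity of the evaluation maps $u_0 \mapsto u_0(0)$ and $u_0 \mapsto \nabla u_0(0)$ on $C^{2,\alpha}(\mathcal{D})$ gives $\|v\|_{C^{2,\alpha}} \leq^C \|u_0\|_{C^{2,\alpha}}$, so combining with the Schauder estimate for $u_0$ yields
\[
\| u\|_{C^{2,\alpha}(\mathcal{D})} \leq^C \|E\|_{C^{0,\alpha}(\mathcal{D})} + \|\mathring{f}\|_{C^{2,\alpha}(\mathbb{S}^1)}.
\]
Together with the Open Mapping Theorem (or simply from this explicit bound), this proves that the inverse is bounded, so the map is an isomorphism.

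There is no serious obstacle: the argument is a textbook reduction, with the only subtlety being the bookkeeping that the three-dimensional cokernel of the restricted boundary map is exactly compensated by the three-dimensional kernel imposed by the normalization condition, so that the mode-suppressing boundary operator and the pointwise normalization are perfectly dual.
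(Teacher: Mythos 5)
Your proposal is correct and follows essentially the same route as the paper: solve the classical Dirichlet problem on the disk and then subtract the affine function $u_0(0) + \nabla u_0(0)\cdot(x,y)$, whose trace is a lower mode, to achieve the normalization. Your additional remarks on injectivity (harmonic extensions of lower modes are exactly the affine functions) and on boundedness of the inverse via the Schauder estimate simply make explicit what the paper leaves as "clearly, $u$ is unique."
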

\begin{proof}
Since the disk is Dirichlet non-degenerate, there is a unique function $u_0 \in C^{2, \alpha} (\mc{D})$ with 
\[
\Delta_{\mc{D}} u_0 = E, \quad \mc{tr} (u_0) = f.
\]
The function $\ell$ given by
\[
\ell(x, y) = u_0(0) + \left. \nabla u_0  \right|_{0} \cdot (x, y)
\]
is harmonic and $\mr{\mc{tr}} (u) = 0$. The function $u = u_0 - \ell$ then solves the modified $(E, f)$ Poission-Dirichlet problem on $\mc{D}$ for $\Delta_{\mc{D}}$. Clearly, $u$ is unique. This completes the proof. 
\end{proof}

\begin{corollary}\label{Prop:DiskPDProblem}
There are constants $\epsilon >0$ and $ C> 0$ such that: Given $f \in \mr{C}^{2, \alpha}(\mb{S}^1)$ and $E \in C^{0, \alpha} (\mc{D})$ with $\| f\|, \| E\| \leq \epsilon$, there is a unique function $\mc{h} = \mc{h}_{ f, E} \in C^{2, \alpha} (\mc{D})$ such that 
\[
H_\mc{D}(\mc{h}) = E, \quad \mr{\mc{tr}} (\mc{h}) = f
\] 
and such that $\mc{h} (0) = \left. \nabla \mc{h} \right|_{0} = 0$. 
\end{corollary}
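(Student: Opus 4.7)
The plan is to obtain the statement as a direct application of the Implicit Function Theorem, using Lemma \ref{ModifiedDiskProblem} to invert the linearization and the general smoothness of geometric operators recorded in Section \ref{Sec:GeometricObjects}.

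First I would set up the appropriate nonlinear operator. Since $\mc{D}$ is parametrized by the identity into the horizontal plane, the graph mean curvature operator $\mc{h} \mapsto H_\mc{D}(\mc{h})$ is defined on a neighborhood of the origin in $C^{2, \alpha}(\mc{D})$ and takes values in $C^{0, \alpha}(\mc{D})$, smoothly in the Frechet sense by the discussion in Section \ref{Sec:GeometricObjects}. In particular, $H_\mc{D}(0) = 0$ because the disk is minimal. I would then consider the combined map
\[
\Psi: C^{2, \alpha}_{n}(\mc{D}) \longrightarrow \wh{C}^{0, \alpha}(\mc{D}), \qquad \Psi(\mc{h}) = \bigl(H_\mc{D}(\mc{h}), \mr{\mc{tr}}(\mc{h})\bigr),
\]
defined on a small neighborhood of $0$ in the Banach space $C^{2, \alpha}_{n}(\mc{D})$ of normalized functions.

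Next I would compute the linearization at $\mc{h} = 0$. From the first variation formula (\ref{FirstVariation}) applied to the flat minimal immersion $\mc{D} \hookrightarrow \mb{R}^3$, for which $\left| A \right|^{2} = 0$, the linearization of $H_\mc{D}$ at $0$ is simply $\Delta_\mc{D}$. Thus
\[
D\Psi\bigr|_{0}(u) = \bigl( \Delta_{\mc{D}} u, \mr{\mc{tr}}(u) \bigr),
\]
which is precisely the mapping shown to be an isomorphism in Lemma \ref{ModifiedDiskProblem}. Therefore $D\Psi|_0$ is a topological isomorphism from $C^{2, \alpha}_{n}(\mc{D})$ onto $\wh{C}^{0, \alpha}(\mc{D})$.

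Finally I would invoke the Implicit Function Theorem for Banach spaces: there exist $\epsilon > 0$ and $C > 0$ such that, for every pair $(E, f) \in \wh{C}^{0, \alpha}(\mc{D})$ with $\|E\|, \|f\| \leq \epsilon$, there is a unique $\mc{h}_{f, E} \in C^{2, \alpha}_{n}(\mc{D})$ in a small ball about $0$ solving $\Psi(\mc{h}_{f, E}) = (E, f)$, with the quantitative bound $\|\mc{h}_{f, E}\|^{2, \alpha} \leq^{C} \|E\| + \|f\|$ following from smoothness of the inverse. Unpacking $\Psi$ and the definition of the normalized space recovers exactly the equations $H_\mc{D}(\mc{h}) = E$, $\mr{\mc{tr}}(\mc{h}) = f$, and $\mc{h}(0) = \nabla \mc{h}|_0 = 0$. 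There is no serious obstacle here; the content of the corollary lies entirely in Lemma \ref{ModifiedDiskProblem}, and the only thing to verify carefully is that the trivial vanishing of the second fundamental form of the flat disk makes the linearization $\Delta_\mc{D}$ (rather than $\Delta_\mc{D} + |A|^2$), which the first variation formula (\ref{FirstVariation}) provides immediately.
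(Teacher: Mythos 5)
Your proposal is correct and follows essentially the same route as the paper: both consider the map $u \mapsto (H_{\mc{D}}(u), \mr{\mc{tr}}(u))$ on the normalized space $C^{2,\alpha}_{n}(\mc{D})$, note its smoothness, identify its linearization at $0$ with the isomorphism of Lemma \ref{ModifiedDiskProblem}, and conclude by the inverse/implicit function theorem. Your explicit check that $|A|^2 = 0$ makes the linearization $\Delta_{\mc{D}}$ is a useful detail the paper leaves implicit, but it is not a different argument.
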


\begin{proof}
The mapping $\wh{H}_{\mc{D}}: u \mapsto (H_{\mc{D}}(u), \mr{\mc{tr}}(u))$ is smooth and defined on a ball near the origin in $C_n^{2, \alpha}(\mc{D})$  and maps into $\wh{C}^{0, \alpha}(\mc{D})$. By  Lemma \ref{ModifiedDiskProblem}, its linearization at $u= 0$ is an isomorphism. It is then a diffeomorphism near zero, which is equivalent to the statement of the corollary 
\end{proof}

\begin{corollary}
The functions $\mc{h}^{\pm}_{\tau, \delta, f}$ converge in $C^{2, \alpha/2}$ on $\mc{D}$ away from the origin to the function $\mc{h}_{\delta, f^+}$ on $\mc{D}$, where $f^+$ denotes the restriction of $f$ to $\partial_+ \Omega_{l_\tau}$.
\end{corollary}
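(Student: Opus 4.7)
The plan is to extract a $C^{2,\alpha/2}$ subsequential limit of $\mc{h}^{\pm}_{\tau, \delta, f}$ on $\mc{D}\setminus\{0\}$ by Arzelà-Ascoli, show that this limit is a normalized $C^{2,\alpha}$ solution of the modified $(\delta, f^\pm)$-Poisson-Dirichlet problem for $H_\mc{D}$, and then apply uniqueness from Corollary \ref{Prop:DiskPDProblem}.

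First I would translate the weighted bound from Corollary \ref{Prop:CMCSolutions} into a pointwise bound. Using $\mc{h}_{\tau, \delta, f} = \omega_\tau \mc{h}'_{\tau, \delta, f}$ with $\omega_\tau(s) = \tau\cosh(s) = r$ on $\mc{N}_\tau$, the estimate $\|\mc{h}'_{\tau,\delta,f}\|^{2,\alpha,\gamma}_\tau \leq^C (|\delta| + \|f\|)$ yields
\[
|\mc{h}_{\tau, \delta, f}| \leq^C \epsilon\, r^{1+\gamma}
\]
together with corresponding rescaled bounds on the derivatives at each localization scale $s_0$. Since $\wt{\mc{N}}^\pm_\tau$ converges in $C^k$ to $\mc{D}$ on any compact $K \subset \mc{D}\setminus\{0\}$, and since on such $K$ the radius $r$ is bounded above and below away from $0$, these local bounds assemble into uniform $C^{2,\alpha}(K)$ control of $\mc{h}^{\pm}_{\tau,\delta,f}$. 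Arzelà-Ascoli then produces subsequential $C^{2,\alpha/2}$ limits $\mc{h}_\infty$ on compact subsets of $\mc{D}\setminus\{0\}$, exhausting to a limit on $\mc{D}\setminus\{0\}$.

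The equation $H_{\mc{G}_\tau}(\mc{h}_{\tau,\delta,f}) = \delta$ passes to the limit by smoothness of the mean curvature mapping under $C^k$-convergence of immersions (Section \ref{Sec:GeometricObjects}) and smooth convergence of $\wt{\mc{N}}^\pm_\tau$ to $\mc{D}$, giving $H_\mc{D}(\mc{h}_\infty) = \delta$ on $\mc{D}\setminus\{0\}$. The boundary component $\partial_+ \Omega_{l_\tau}$ is identified with $\partial \mc{D}$ for all $\tau$, and $f^+$ is fixed, so the trace condition $\mr{\mc{tr}}(\mc{h}_{\tau,\delta,f}^+) = \mr{f^+}$ persists in the limit. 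The pointwise bound $|\mc{h}_\infty| \leq^C \epsilon\, r^{1+\gamma}$ inherited from the $\tau>0$ estimates extends $\mc{h}_\infty$ continuously by $0$ at the origin; since $1+\gamma > 1$, this also pins $\nabla \mc{h}_\infty(0) = 0$. Standard Schauder regularity for the bounded solution $\mc{h}_\infty$ of the quasilinear elliptic CMC equation upgrades it to a $C^{2,\alpha}$ function on all of $\mc{D}$, normalized in the sense of \eqref{DiskNormalized}.

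Corollary \ref{Prop:DiskPDProblem} now identifies $\mc{h}_\infty = \mc{h}_{\delta, f^+}$ uniquely, so every subsequential limit agrees and the full family $\mc{h}^+_{\tau, \delta, f}$ converges in $C^{2,\alpha/2}$ away from the origin to $\mc{h}_{\delta, f^+}$; the same argument applied to the lower sheet yields convergence of $\mc{h}^-_{\tau,\delta,f}$ to $\mc{h}_{\delta, f^-}$. The main technical point I expect is the removable-singularity step: promoting the $C^{2,\alpha/2}$ regularity of $\mc{h}_\infty$ on the punctured disk, combined with the pointwise bound $r^{1+\gamma}$ at the origin, to genuine $C^{2,\alpha}$ regularity at the origin so that Corollary \ref{Prop:DiskPDProblem} applies. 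This is managed by writing $\mc{h}_\infty$ as the sum of a harmonic extension with the correct boundary data and a correction solving a linear elliptic equation with $C^{0,\alpha}$ source, and invoking interior Schauder estimates on the full disk.
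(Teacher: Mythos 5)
Your proposal is correct and follows essentially the same route as the paper: the uniform $C^{2,\alpha}$ bounds and the decay estimate $|\mc{h}^\pm_{\tau,\delta,f}|\leq^C r^{1+\gamma}$ give subsequential $C^{2,\alpha/2}$ limits away from the origin, the equation and higher trace pass to the limit, the decay forces the limit to be normalized, and uniqueness from Corollary \ref{Prop:DiskPDProblem} identifies it. You simply spell out the removable-singularity/regularity-at-the-origin step that the paper leaves implicit.
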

\begin{proof}
By Proposition \ref{Prop:CMCSolutions}, the functions $\mc{h}^{\pm}_{\tau, \delta, f}: \mc{N}^{\pm}_{\tau} \rightarrow \mb{R}$ are uniformly bounded in $C^{2, \alpha}$ and satisfy the bound
\begin{align}\label{Ex:Uwer}
\left|\mc{h}^\pm_{\tau, \delta, f} \right| \leq^C r^{1 + \gamma},
\end{align}
where $\gamma \in (0, 1)$ and are locally bounded in $C^{2, \alpha}$. Taking a sequence $\tau_j \rightarrow 0$, the solutions $\mc{h}_{j} : = \mc{h}^\pm_{\tau_j, \delta, f}$, after possibly passing to a subsequence, converge  on the disk $\mc{D}$ in $C^{2, \alpha/2}$ away from the origin to a function $\mc{h}$ satisfying
\[
H_{\mc{D}} (\mc{h}) = 0, \quad \mr{\mc{tr}}(\mc{h}) = f.
\]
The uniform estimate (\ref{Ex:Uwer}) then implies that $u$ is normalized. By Corollary \ref{Prop:DiskPDProblem}, we then have $\mc{h} = \mc{h}_{\delta, f^+}$. This completes the proof. 
\end{proof}

\section{Improved decay estimates for  solutions} \label{Sec:ImprovedDecay}
In order to establish uniform estimates for the $\tau$-derivative of the solutions, we need optimal decay estimates for the solutions $\mc{h}'_{\tau, \delta, f}$, which a-priori lie in the spaces $W^{k, \alpha, \gamma}_{\tau}$. By comparing these solutions with their limits $\mc{h}^{\pm}_{\delta, f}$ on the disk, which extend smoothly to the origin and vanish to first order, we obtain improved estimates for the decay. For later applications, we will also need to separate out the dominant mode, which is higher, from the rest of $\mc{h}_{\tau, \delta, f}$. This allows improved estimate for the lower part of $\mc{h}_{\tau, \delta, f}$ which is needed in the proof of Proposition \ref{Prop:HDotBounds} and  Lemma  \ref{Prop:VaritiationAtZero}.

\begin{proposition}\label{Prop:ImprovedDecay}
It holds that 
\[
\left\| \mc{h}'_{\tau, \delta, f}\right\|^{2, \alpha, 1}_{\tau} \leq^C \epsilon,  \quad \left\|\ol{\mc{h}}'_{\tau, \delta, f} \right\|^{2, \alpha, \gamma}_{\tau} \leq^C \epsilon \tau.
\]
\end{proposition}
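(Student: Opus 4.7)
The strategy is to write $\mc{h}' := \mc{h}'_{\tau,\delta,f}$ as an approximate solution built from the disk limits $\mc{h}^\pm_{\delta,f^\pm}$ plus a small correction controlled by Proposition \ref{Prop:WeightedInvertibilty}, and then to extract the sharper lower-mode bound by integrating the mode-preserved ODEs for $\ol{\mc{h}'}$.

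First, I would build an approximate solution on the cylinder. By Corollary \ref{Prop:DiskPDProblem} the disk limits $\mc{h}^\pm = \mc{h}^\pm_{\delta,f^\pm}$ extend smoothly to the origin with $\mc{h}^\pm(0) = \nabla\mc{h}^\pm(0) = 0$, hence $|\mc{h}^\pm(x,r)| \leq^C \epsilon\, r^2$ together with matching derivative bounds. Define $v_\tau(x,s)$ on $\Omega_{l_\tau}$ by setting $v_\tau(x,s) := \mc{h}^\pm(x, \tau\cosh(s))$ on each sheet $\{\pm s \geq 0\}$ (identifying the disk radial coordinate with $r = \tau\cosh(s)$) and patching with a $\tau$-independent smooth cutoff across $s = 0$. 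The chain rule together with the quadratic vanishing of $\mc{h}^\pm$ at the origin immediately gives $|v_\tau(x,s)| \leq^C \epsilon\, \omega_\tau(s)^2$ and matching higher derivative bounds, so $v_\tau' := v_\tau/\omega_\tau$ satisfies $\|v_\tau'\|^{2,\alpha,1}_\tau \leq^C \epsilon$.

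Second, I would show $v_\tau'$ is an approximate solution of $H'_\mc{F}(v_\tau') = \tau\delta\cosh(s)$. Using the localized formula (\ref{WeightedOperatorScaling}), compare the normalized mean curvature operator at $\wt{\mc{F}}|_{s_0}$ with the limiting operator at $\wt{\mc{F}}|_\infty$ parametrizing a flat annulus. Since $\mc{h}^\pm$ solves the disk CMC equation exactly and the catenoid's second fundamental form decays like $\cosh^{-2}(s)$, the discrepancy $E := H'_\mc{F}(v_\tau') - \tau\delta\cosh(s)$ is bounded in $\|\cdot\|^{0,\alpha,\gamma}_\tau$ by $C\epsilon$ with additional exponential decay in $|s|$, and the smooth interpolation near $s = 0$ contributes only on a bounded region. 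Let $w$ be the normalized part of $\mc{h}' - v_\tau'$, obtained by subtracting the unique geometric Jacobi field with matching signature. Then $w$ solves a linear equation of the form $L'_\mc{F}(w) = -E - R(v_\tau', w)$ with small boundary trace, where $R$ is the Taylor remainder of $H'_\mc{F}$ and is quadratic in its arguments. Applying Proposition \ref{Prop:WeightedInvertibilty} and the a priori bound $\|\mc{h}'\|^{2,\alpha,\gamma}_\tau \leq^C \epsilon$ from Proposition \ref{Prop:PDSolutions} yields $\|w\|^{2,\alpha,1}_\tau \leq^C \epsilon$, which combined with the bound on $v_\tau'$ (and on the bounded geometric Jacobi field, whose signature is small) gives the first estimate.

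For the second estimate, I would project the equation $L'_\mc{F}(\mc{h}') = \tau\delta\cosh(s) - Q(\mc{h}')$ onto lower modes, using that $L'_\mc{F}$ is mode preserving as described in Section \ref{Sec:ModePreservation}. The three resulting ODEs for $\ol{\mc{h}'}$ have the geometric Jacobi fields from the preliminaries as fundamental solutions with unit Wronskian, and the normalization $\ol{\mc{h}'}(x,0) = \ol{\mc{h}'}_s(x,0) = 0$ provides initial data. A Duhamel computation against the source $\tau\delta\cosh(s) - \ol{Q(\mc{h}')}$ then produces the extra factor of $\tau$, once the sharp bound from the first estimate is inserted into $Q$ to control its lower-mode part. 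The main obstacle is the precise bookkeeping in the second step: estimating $E$ in the weighted norms, and in particular showing that its lower part carries the extra factor of $\tau$. This relies on the localized representation (\ref{WeightedOperatorScaling}), the vanishing of the hessian of the graph mean curvature operator at affine maps noted in the introduction, and the rotational symmetry of $\mc{F}$.
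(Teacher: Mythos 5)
Your construction of the approximate solution coincides with the paper's: pull back the disk limits $\mc{h}^\pm_{\delta,f^\pm}$ through $r=\tau\cosh(s)$ and cut off near the waist, so that $|\mc{h}^*|\leq^C\epsilon\,\omega_\tau$. The gap is in the linear step that is supposed to deliver the weight-one bound. You invoke Proposition \ref{Prop:WeightedInvertibilty} to conclude $\|w\|^{2,\alpha,1}_{\tau}\leq^C\epsilon$ for the correction, but that proposition is an isomorphism statement only for the non-integer weight $\gamma\in(0,1)$; its proof rests on Lemma \ref{Prop:Classification}, which requires growth strictly between the indicial rates, so no invertibility at weight $1$ is available — this is precisely the obstruction the whole section is designed to get around. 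Moreover your error bound (``$\|E\|^{0,\alpha,\gamma}_\tau\leq^C\epsilon$ with additional exponential decay'') is too coarse: fed into the weight-$\gamma$ theory it only returns $\|w\|^{2,\alpha,\gamma}_\tau\leq^C\epsilon$, i.e.\ nothing beyond the a priori estimate. The paper never inverts at weight one. It shows instead that the error carries a factor of $\tau$: the catenoid chart and the flat chart differ by $O(\cosh^{-1}(s_0))$ while $\|\mc{h}^*|_{s_0}\|\leq^C\epsilon\,\omega_0$, so the localized error is $O(\epsilon\tau)$. It then splits the error into lower and higher parts — mode preservation (Section \ref{Sec:ModePreservation}) enters already here, in the first estimate, to control the lower part — solves the decaying higher part with Proposition \ref{Prop:DecayInvertibility} (whose hypothesis $\ol{E}=0$ is exactly why the splitting is needed) and the remainder with Proposition \ref{Prop:WeightedInvertibilty} at weight $\gamma$, obtaining $|\mc{h}'-\mc{h}^*|\leq^{C\epsilon}\tau$ pointwise. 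Since $\tau\leq\omega_\tau(s)$ on $\Omega_{l_\tau}$, the weight-one bound for $\mc{h}'$ follows from the triangle inequality: the sharp estimate comes from the smallness of the correction, not from a weight-one inversion.

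For the second estimate you propose projecting the full nonlinear equation onto the lower modes and running a Duhamel computation against the geometric Jacobi fields. This is a genuinely different route from the paper, which reads the improved bound directly off the same decomposition ($\mc{h}'-\mc{h}^*=u+v$ with $\ol{u}=0$ and $\|v\|^{2,\alpha,\gamma}_\tau\leq^C\tau$). Your route is not unreasonable, but, as you yourself flag, it hinges on the lower part of the source carrying an extra factor of $\tau$, and you do not supply that estimate; in the paper this is exactly the bound on $\ol{\mc{E}'}$ obtained from the mode-preserving property of the linearizations at the rotationally symmetric immersions applied to $\ol{\mc{h}^*}$, combined with the correction bounds above. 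Without that input the Duhamel argument only reproduces the a priori rate, so both halves of the proposition remain unproved as written.
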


\begin{proof}[Proof of Proposition \ref{Prop:ImprovedDecay}]
Fix $f \in \mr{C}^{2, \alpha} (\partial \mc{N}_{\tau})$ and $\delta  \in \mb{R}$ such that $\mc{h}' : = \mc{h}'_{\tau, \delta, f}$ is defined, and let $f_{\pm} (x) = f (x, \pm l_{\tau})$ denote the restriction of $f$ to the boundary component $\partial_{\pm} \mc{N}_{\tau}$. By Corollary \ref{Prop:DiskPDProblem}, there are constants $\epsilon > 0$ and $C > 0$ such that the function $\mc{h}_+ : = \mc{h}_{\delta, f_+}$ is defined and satisfies 
\[
\| \mc{h}_+\|^{2, \alpha} \leq^C \epsilon
\]
 Since $\mc{h}_+$ vanishes to first order on the disk, the  taylor expansion of $\mc{h}_+$ at the origin gives that
 \[
 \mc{h}_{+} = \mc{f}_+ +  \mc{g}_+
 \]
 where we have set
 \[
\mc{f}_+ : =  a x^2 + b y^2 + c xy 
 \]
  for constants $a$, $b$ and $c$ satisfying $|a|, |b|, |c| \leq^C \epsilon$ and where $\mc{g}_+$ satisfies the estimate:
\begin{align}\label{gestimate}
\sup \left|\mc{g}_+ (r, \theta) \right| \leq^C \epsilon r^3,
\end{align}
where $(r, \theta)$ denotes polar coordinates on $\mc{D}$. Define $\mc{h}^*_{+}$ on $ \mc{N}^*_{\tau}$ as follows: Fix a  smooth  monotonic function $\psi: \mb{R} \rightarrow \mb{R}$  such that $\psi (s) = 0$ for $s \leq 10$ and $\psi(s) = 1$ for $s \geq 11$ and set
\begin{align*}
\mc{h}^*_+(x, s) & = \frac{ \psi(s) \left(\mc{h}_{\delta, f_+} \circ \ol{\mc{G}}_{\tau}(x, s) \right)}{\omega} \\
& =  \frac{ \psi(s) \left(\mc{f}_+\circ \ol{\mc{G}}_{\tau}(x, s) \right)}{\omega}  +  \frac{ \psi(s) \left(\mc{g}_+\circ \ol{\mc{G}}_{\tau}(x, s) \right)}{\omega} \\
& = : \mc{f}^*_+ + \mc{g}^*_+
\end{align*}
where we have set 
\[
\ol{\mc{G}}_{\tau}(x, s) = \tau \cosh(s) e_r (x), \quad \omega= \omega_{\tau}= \tau \cosh(s).
\]
Observe that $\ol{\mc{G}}_{\tau}$ takes the domain $\mc{N}^{*, +}_{\tau} : = \Omega_{0, l_\tau}$ into $\mc{D}$ and that it is a diffeomorphism for $s > 0$. Since we have
\[
\left(\ol{\mc{G}}_{\tau}\right)^* (r) = r \circ \ol{\mc{G}}_{\tau} = \tau \cosh(s) = \omega,
\]
 the function $\mc{f}_+^*$ is then explicitly given by
\begin{align}\label{Eq:PullbackDominantPart}
\mc{f}_+^* =  \omega \psi \left(a \cos^2(x) + g \sin^2(x) + 2 \cos(x) \sin(x) \right).
\end{align}
Additionally, we have the estimate
\begin{align}\label{Eq:RemainderEst}
\sup \left| \mc{g}_+ \right| \leq^C \omega^2.
\end{align}
We can  similarly define a function $\mc{h}^*_-$ by pulling back $\mc{h}_{-\delta, f_-}$  from the disk to the bottom half of the catenoidal neck and we set 
\[
\mc{h}^* : = \mc{h}_+^* + \mc{h}^*_- .
\]
We then have that  $\mc{h}^*$ is defined on $\mc{N}_{\tau}^*$ and it holds that 
\[
\mr{\mc{tr}}(\mc{h}^*) = f = \mr{\mc{tr}} (\mc{h}_{\tau, \delta, f}).
\]
We will  estimate the difference
\begin{align}\label{Eq:ErrorEstimate}
\mc{E} : = \ H'_{F} \left( \mc{h}^*\right) - \delta \tau \cosh(s) 
\end{align}
We  consider only that case that $s_0 \in \mb{R}$ is positive, as the case $s_0 \leq 0$ is treated identically. The estimate then follows from considering two cases separately: In the first, we assume $s_0$ to be near the support of $\nabla \psi$, in which case the function $\mc{h}^*$ is sufficiently small and the structure of the error term is unimportant. In the second case, $s_0$ is far from the support of $\nabla \phi$ and we use the fact that here $\mc{h}^*$ is exactly the pullback of the corresponding solution on the disk. In this region, $\mc{h}^*$ is no longer small, but the distance from the catenoidal neck to the flat disk is, which we exploit.  
We consider now the first case. For $s_0$ belonging to the region $[8, 13]$, we have 
\[
\left\| \left. \mc{h}^* \right|_{s_0} \right\|^{2, \alpha} \leq^C \omega_0 \leq^C \epsilon \tau
\]
and  thus trivially
\[
\left\| \left. L_{\mc{F}}' \left(\mc{h}^* \right) \right|_{s_0} \right\|^{0, \alpha} \leq^C \epsilon \tau. 
\]
For the second case,  we localize at $s_0 \geq 12$--that is, away from the support of $\nabla \psi$. Observe that 
\begin{align} \label{Ex:HApproxEstimate}
\left\|\mc{h}^* \right\|^{2, \alpha, 1}_{\tau} \leq^C \epsilon.
\end{align}
Given $s_0  \geq 12$, we have
\begin{align*}
\left. \wt{\mc{F}} \right|_{s_0} (x, s) & = \mc{c}_{s_0} (s) e_r(x)+ \frac{s}{\cosh(s_0)} e_z\\
&  = : \left. \wt{\ol{\mc{G}}} \right|_{s_0} (x, s) + \frac{s}{\cosh(s_0)} e_z
\end{align*}
where $ \left. \wt{\ol{\mc{G}}} \right|_{s_0}$ is defined implicitly above and thus
\begin{align} \label{Ex:FGEstiamte}
\left\| \left. \wt{\mc{F}} \right|_{s_0}  - \left. \wt{\ol{\mc{G}}}_{\tau} \right|_{s_0}  \right\|^{2, \alpha} \leq^C \cosh^{-1}(s_0).
\end{align}
Since
\[
\cosh(s) H_{\ol{\mc{G}}_\tau}(\tau \cosh(s)\mc{h}^*) = \delta \cosh(s),
\]
we  have 
\begin{align}\label{ErrorTerm1}
\left. \mc{E} \right|_{s_0} : = \mc{c}_{s_0}  \left\{ H_{\left. \wt{\mc{F}} \right|_{s_0}} -  H_{\left. \wt{\ol{\mc{G}}}_{\tau} \right|_{s_0}} \right\}\left(\mc{c}_{s_0} \left.\mc{h}^* \right|_{s_0} \right)
\end{align}
In order to get the estimates we want we will have to decompose the error term into its lower and higher parts. 
Given immersions $F, G \in R^{3, \alpha} (\Omega_0)$ and a function $u \in C^{2, \alpha} (D)$, set
\[
\Phi(F, G, u) : = H_{F} (u) - H_{G}(u)
\]
so that above we have
\[
\left. \mc{E} \right|_{s_0}   = \Phi\left( \left. \wt{\mc{F}} \right|_{s_0},\left. \wt{\ol{\mc{G}}}_{\tau} \right|_{s_0},  \left.\mc{h}^* \right|_{s_0} \right)
\]
Then $\Phi$ is smooth as a map into $C^{0, \alpha} (\Omega_0)$ and defined near $(F, G, u) = \left( \left. \wt{\ol{\mc{G}}}_{\tau} \right|_{s_0} , \left. \wt{\ol{\mc{G}}}_{\tau} \right|_{s_0} , 0\right)$ and vanishes whenever $F = G$. We then have
\begin{align}\label{FormalPhiEstimate}
\left\|\left. \frac{\partial^{k}\Phi}{\partial u^{k}} \right|_{(F, G, u)} \right\| \leq^C \left\| F - G\right\|^{3, \alpha}\| u\|^{2, \alpha}
\end{align}
Since  $\left. \wt{\mc{F}} \right|_{s_0}$ and $\left. \wt{\ol{\mc{G}}}_{\tau} \right|_{s_0}$ are both minimal we have $\Phi\left( \left. \wt{\mc{F}} \right|_{s_0},\left. \wt{\ol{\mc{G}}}_{\tau} \right|_{s_0}, 0\right) = 0$ and thus
\begin{align*}
\left. \mc{E} \right|_{s_0} & =  \left(\left. \frac{\partial \Phi}{\partial u} \right|_{\left( \left. \wt{\mc{F}} \right|_{s_0},\left. \wt{\ol{\mc{G}}}_{\tau} \right|_{s_0}, 0 \right)}\right)\left( \mc{h}^*\right) + R \\
& = \mc{E}' + R
\end{align*}
where $R$ is the first order remainder term. Directly from (\ref{Ex:FGEstiamte}) and  (\ref{FormalPhiEstimate}) we get: 
\[
\left\| R\right\|^{0, \alpha} \leq^{C}\left( \cosh^{-1}(s_0)\right)\left( \left\| \mc{h}^*\right\|^{2, \alpha}\right)^2 \leq^{C\epsilon} \tau \omega.
\]
and 
\[
\left\| \mc{E}'\right\|^{0, \alpha}  \leq^C\left( \cosh^{-1}(s_0)\right) \left\| \mc{h}^*\right\|^{2, \alpha}\leq^{C\epsilon} \tau.
\]
We now estimate $\ol{\mc{E}'}$. To do this,  write
\[
\left(\left. \frac{\partial \Phi}{\partial u} \right|_{\left( \left. \wt{\mc{F}} \right|_{s_0},\left. \wt{\ol{\mc{G}}}_{\tau} \right|_{s_0}, 0 \right)}\right)\left( \mc{h}^*\right)  = \left. D H_{ \left. \wt{\mc{F}} \right|_{s_0}} \right|_{0}\left(\mc{h}^* \right) - \left. D H_{ \left. \wt{\ol{\mc{G}}}_{\tau} \right|_{s_0}} \right|_{0}\left(\mc{h}^* \right)
\]
Since   $ \left. \wt{\mc{F}} \right|_{s_0}$ and $ \left. \wt{\ol{\mc{G}}}_{\tau} \right|_{s_0}$ are rotationally symmetric, the linearizations $\left. D H_{ \left. \wt{\mc{F}} \right|_{s_0}} \right|_{0}$ and   $\left. D H_{ \left. \wt{\ol{\mc{G}}}_{\tau} \right|_{s_0}} \right|_{0}$  are then mode separating (recall Section \ref{Sec:ModePreservation}) and thus
\[
\ol{\left(\left. \frac{\partial \Phi}{\partial u} \right|_{\left( \left. \wt{\mc{F}} \right|_{s_0},\left. \wt{\ol{\mc{G}}}_{\tau} \right|_{s_0}, 0 \right)}\right)\left( \mc{h}^*\right)} = \left(\left. \frac{\partial \Phi}{\partial u} \right|_{\left( \left. \wt{\mc{F}} \right|_{s_0},\left. \wt{\ol{\mc{G}}}_{\tau} \right|_{s_0}, 0 \right)}\right)\left( \ol{\mc{h}^*}\right).
\]
This then gives:
\[
\left\|\ol{\mc{E}'} \right\|^{0, \alpha} \leq^C \left( \cosh^{-1} (s_0)\right)\left(\|\ol{\mc{h}^*} \|^{2, \alpha} \right) \leq^C \tau \omega. 
\]
Thus we can split $\mc{E}$ into the sum of the higher and lower parts $\mc{E} = \ol{\mc{E}} + \mr{\mc{E}}$ where
\[
\left\| \mr{\mc{E}}\right\|^{0, \alpha, 0}_1 \leq^C \tau, \quad \left\| \ol{\mc{E}}\right\|^{0, \alpha, 1}_{\tau} \leq^C \tau
\]
Observe that 
\begin{align*}
H'_{\mc{F}}(\mc{h}') - H'_{\mc{F}}(\mc{h}^*) & = \int_{0}^{1} \left. L'_{\mc{F}} \right|_{\mc{h}(t)}\left( \mc{h}' - \mc{h}^*\right)dt \\
&  = : \mc{L}\left( \mc{h}' - \mc{h}^*\right)
\end{align*}
where above we have set $\mc{h}(t)= (1 - t) \mc{h}' + t \mc{h}^*$ and where the operator $\mc{L}$ is defined implicitly above. Combining with (\ref{ErrorTerm1}) we have
\[
\mc{L}\left(\mc{h}' - \mc{h}^*\right) = \mc{E}.
\]
By Proposition \ref{Prop:DecayInvertibility}, there is a unique function $u$  such that
\[
L_{F}'(u) = \mr{\mc{E}}, \quad \mc{tr}(u) = 0
\]
and satisfying the estimate $\left\| u\right\|^{2, \alpha, 0}_{1} \leq^C \epsilon \tau$. 
Then
\begin{align*}
\mc{L}\left(\mc{h}' - \mc{h}^* - u\right) & = \ol{\mc{E}} + \left(\mc{L} - L'_{\mc{F}} \right)(u).
\end{align*}
We then have
\begin{align*}
\left\|\left.   \left(\mc{L} - L'_{\mc{F}} \right)(u) \right|_{s_0}\right\| &\leq^C\sup\left\| \left.  \mc{h}(t) \right|_{s_0}\right\|\left\| \left. u \right|_{s_0}\right\| \\
& \leq^C \epsilon  \tau \omega^{\gamma}.
\end{align*}
 and thus
 \[
 \left\| \mc{L}\left(\mc{h}' - \mc{h}^* - u\right)\right\|^{0, \alpha, \gamma}_{\tau} \leq^C \epsilon \tau.
 \]
The operator $\mc{L}$ is a easily seen to be  perturbation of $L_{F}'$ in the weighted spaces $W^{k, \alpha, \gamma}_{\tau}$,  and thus by Proposition  \ref{Prop:WeightedInvertibilty}, there is a unique $v \in W^{2, \alpha, \gamma}_{\tau, n}$ such that 
 \[
 \mc{L}\left(\mc{h}' - \mc{h}^* - u - v\right)= 0, \quad  \mr{\mc{tr}}(v) = 0, 
 \]
 and such that $\left\|v\right\|^{2, \alpha, \gamma}_{\tau} \leq^C \tau$.
 In particular, this gives the local estimate
 \begin{align} \label{LimitEstimate}
 \left\|\left.  \left(  \mc{h}' - \mc{h}^*\right) \right|_{s_0}\right\|&  \leq^{C\epsilon} \tau + \tau \omega_0^{\gamma} \\
 &\leq^{C\epsilon} \tau \notag
 \end{align}
We then have
 \begin{align*}
 \left\|\left. \mc{h}' \right|_{s_0} \right\| & \leq  \left\|\left. \mc{h}^* \right|_{s_0} \right\| +   \left\|\left. \mc{h}^* \right|_{s_0} - \left. \mc{h}' \right|_{s_0} \right\| \\
 & \leq^{C \epsilon} \omega_0 + \tau \\
 & \leq^{C \epsilon} \omega_0. 
 \end{align*}
 Taking the supremum over $s_0$ gives 
 \[
 \|\mc{h}^{'} \|^{2, \alpha, \gamma}_{1}\leq^C \epsilon.
 \]
 Finally, the second estimate follow from the fact that $\ol{u} = 0$ and thus
 \begin{align}\label{HHigherEsimate}
 \left\| \ol{\mc{h}'} - \ol{\mc{h}^*}\right\|^{2, \alpha, \gamma}_{\tau} =  \left\| v\right\|^{2, \alpha, \gamma}_{\tau} \leq^C \tau.
 \end{align}
This completes the proof.
\end{proof}
\section{$C^{1}$-extension of the Dirichlet Solutions to scale zero} \label{Sec:C1Extension}

In this section, we prove that the $\tau$-derivative of the solutions $\mc{h}_{\tau, \delta, f}$ extend continuously to $\tau  = 0$.  Recall that $\mc{h}_{\tau, \delta, f}$ solves the modified $(\delta, f)$-Poisson Dirichlet problem for $  \mc{G}_\tau$ on the domain $\mc{N}_{\tau}^* = \Omega_{l_\tau}$, so:
\[
H_{ \mc{G}_\tau} \left( \mc{h}_{\tau, \delta ,f} \right) = \delta, \quad \mr{\mc{tr}} (\mc{h}_{\tau, \delta, f}) = f.
\]

 \begin{proposition} \label{Prop:HDotBounds}
 Let $\dot{\mc{h}}_{\tau_0, \delta, f}^\pm$ denote the restrictions of $\dot{\mc{h}}_{\tau_0, \delta, f}$ to $\mc{N}^{\pm}_{\tau_0}$. 
Then the  functions $\dot{\mc{h}}^\pm = \dot{\mc{h}}^\pm_{\tau_0}$ converge in $C^{2, \alpha}$ away from the origin   to $ \dot{\mc{h}}^+_{\delta, f}$ 
\end{proposition}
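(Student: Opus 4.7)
The plan is to derive and solve the linearized equation satisfied by $\dot{\mc{h}}_{\tau_0,\delta,f}$ uniformly in $\tau_0$, pass to the limit $\tau_0\to 0$, and identify the limit as $\dot{\mc{h}}^{\pm}_{\delta,f}$ by uniqueness. First I would differentiate the identity
\[
H_{\mc{G}_{\tau_0}}\!\bigl(\mc{h}^{\tau}_{\tau_0,\delta,f}\bigr) = \delta, \qquad \mr{\mc{tr}}\!\bigl(\mc{h}^{\tau}_{\tau_0,\delta,f}\bigr) = f\circ \phi_{\tau_0}^{\tau}
\]
in $\tau$ at $\tau=\tau_0$, using the chain rule and the expression for the variation field $\dot{\mc{G}}_{\tau_0}$ given by \eqref{GtotalVariation}. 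Combining \eqref{FirstVariation} with the mean curvature operator's smoothness (Section \ref{Sec:GeometricObjects}) this will produce a linear equation of the shape
\[
L'_{\mc{F}}\,\dot{\mc{h}}_{\tau_0,\delta,f} \;=\; \mc{E}_1+\mc{E}_2,\qquad \mr{\mc{tr}}\bigl(\dot{\mc{h}}_{\tau_0,\delta,f}\bigr)=0,
\]
where $\dot{\mc{h}}_{\tau_0,\delta,f}$ is normalized. Here $\mc{E}_1$ comes from the geometric $\tau$-variation of the parametrization (which by the formulas for $\xi$ and $\dot\phi$ in \eqref{DotPhiExpression}--\eqref{GtotalVariation} is rotationally symmetric and exponentially decaying), while $\mc{E}_2$ collects the reparametrization contribution to the boundary data and the small but exponentially growing error coming from the difference between $\mc{h}_{\tau_0}$ and $\mc{h}_{\tau}$; this is the splitting announced in the outline and in Lemma \ref{Prop:TauVariationEquation}.

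Next I would estimate each piece separately. For $\mc{E}_2$, a direct calculation based on Proposition \ref{Prop:ImprovedDecay} gives a bound $\|\mc{E}_2\|^{0,\alpha,\gamma}_{\tau_0}\le C\epsilon$; applying Proposition \ref{Prop:WeightedInvertibilty} yields a contribution $v_2\in W^{2,\alpha,\gamma}_{\tau_0,n}$ with $\|v_2\|^{2,\alpha,\gamma}_{\tau_0}\le C\epsilon$. For $\mc{E}_1$, the mode-preservation argument of Section \ref{Sec:ModePreservation} is crucial: because $\dot{\mc{G}}_{\tau_0}^{\perp}=\bigl(\tau\dot\lambda+1\bigr)\xi$ is rotationally symmetric, the linearization evaluated on $\mc{h}_{\tau_0,\delta,f}$ preserves modes and the lower part of $\mc{E}_1$ inherits the improved $O(\tau_0)$ estimate on $\ol{\mc{h}}_{\tau_0,\delta,f}$ from Proposition \ref{Prop:ImprovedDecay}. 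This is the content of Lemma \ref{E1Estimates}: $\mr{\mc{E}}_1$ is large but exponentially decaying (so handled by Proposition \ref{Prop:DecayInvertibility}, producing $v_1$ with $\|v_1\|^{2,\alpha,\gamma}_{1}\le C\epsilon$), while $\ol{\mc{E}}_1$ is of order $\tau_0$ and controlled as an $\mr{L}$-source in the weighted space, producing a summand of order $\tau_0$. Summing yields the uniform a-priori bound
\[
\bigl|\dot{\mc{h}}_{\tau_0,\delta,f}\bigr| \;\leq^{C\epsilon}\; \tau_0/r^{\gamma}+r^{1+\gamma},
\]
i.e. Lemma \ref{Eq:HDotTotalBound}.

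With uniform $C^{2,\alpha}$ bounds on compact subsets away from the origin established, I would next extract a subsequential limit $\dot{\mc{h}}^{\pm}_{\infty}$ of $\dot{\mc{h}}^{\pm}_{\tau_0,\delta,f}$ in $C^{2,\alpha/2}$ on such subsets by Arzel\`a--Ascoli, and identify its PDE: since $\mc{N}^{\pm}_{\tau_0}\to\mc{D}$ smoothly and $\mc{h}^{\pm}_{\tau_0,\delta,f}\to\mc{h}^{\pm}_{\delta,f}$, passing to the limit in the linearized equation produces a normalized modified Poisson--Dirichlet equation on the disk with data matching the one that $\dot{\mc{h}}^{\pm}_{\delta,f}$ satisfies by construction. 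The bound above, combined with the fact that $\tau_0/r^{\gamma}\to 0$ pointwise and that $\dot{\mc{h}}_{\tau_0,\delta,f}$ has vanishing signature, forces the limit to be normalized in the disk sense \eqref{DiskNormalized}. Lemma \ref{ModifiedDiskProblem} then gives $\dot{\mc{h}}^{\pm}_{\infty}=\dot{\mc{h}}^{\pm}_{\delta,f}$; since the limit is independent of the subsequence, the full family converges.

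\textbf{Main obstacle.} The delicate step is the improved $O(\tau_0)$ bound on the lower modes of $\mc{E}_1$: without it the limit would only satisfy the disk equation up to a non-vanishing source at the origin, and the uniqueness step would fail. This is exactly why the mode-preserving structure of the rotationally symmetric catenoid parametrization (Section \ref{Sec:ModePreservation}) and the sharpened lower-mode estimate $\|\ol{\mc{h}}_{\tau,\delta,f}\|\le C\epsilon\tau$ from Proposition \ref{Prop:ImprovedDecay} must be combined; the rest of the argument is a fairly standard elliptic limit passage.
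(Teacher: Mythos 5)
Your proposal is correct and follows essentially the same route as the paper: the $\tau$-differentiated equation with the splitting $\mc{E}_1+\mc{E}_2$ (Lemma \ref{Prop:TauVariationEquation}), the mode-preservation/improved-lower-mode estimate feeding into Lemma \ref{E1Estimates}, the two invertibility results giving the uniform bound of Lemma \ref{Eq:HDotTotalBound}, and the limit passage plus uniqueness on the disk (Lemmas \ref{Prop:VaritiationAtZero} and \ref{ModifiedDiskProblem}). The only deviation is that you absorb $\ol{\mc{E}}_1$ into the growing-weight space via Proposition \ref{Prop:WeightedInvertibilty} (yielding an $O(\epsilon\,\omega^{1+\gamma})$ contribution, which still forces the limit to be normalized) whereas the paper integrates the lower modes explicitly by variation of parameters; this, and the harmless misstatement that the resulting summand is $O(\tau_0)$ rather than $O(\tau^{\gamma})$, does not affect the argument.
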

 
 The proof of Proposition \ref{Prop:HDotBounds} is long and we thus split it  apart into the following steps. First in Lemma \ref{Prop:VaritiationAtZero}, we show that the Gateaux derivative $\dot{\mc{h}^\pm_{\delta, f}}$ at $\tau = 0$ is well-defined in $C^{2, \alpha}$ and solves a modified Poission-Dirichlet problem on the disk with vanishing Dirichlet data. Next, in Lemma \ref{Prop:TauVariationEquation}, we show that  for positive $\tau$, the derivatives $\dot{\mc{h}}_{\tau, \delta, f}$ solves a modified Poission-Dirichlet problem on $\mc{N}_{\tau}$ with vanishing Dirichlet data. The Poisson data of problem splits into   two parts $\mc{E}_1$ and $\mc{E}_2$, both bounded in different weighted holder spaces. The first, $\mc{E}_1$, is large but decaying, and we can apply the invertibility  result Proposition \ref{Prop:DecayInvertibility} to obtain a normalized solution to the Poisson problem for the higher part $\mr{\mc{E}}_1$ of $\mc{E}_1$. The term $\mc{E}_2$ is small but growing and can be solve for using Proposition \ref{Prop:WeightedInvertibilty}. Solving for the higher part $\ol{\mc{E}}_1$ of $\mc{E}_1$ can then be done using separation of variables. This then allows us to show, in Lemma \ref{Eq:HDotTotalBound}, that the functions $\dot{\mc{h}}_{\tau, \delta, f}$ are uniformly bounded in $C^{2, \alpha}$ away from the origin and thus the restrictions $\dot{\mc{h}}^\pm_{\tau, \delta, f}$ converge to a limit on the disk. The limit then satisfies the same Poisson problem as $\dot{\mc{h}}^\pm_{\delta, f}$, with boundary values agreeing up to lower modes. In order to show that the two functions agree, we need to know that that the limit is normalized. To do this, in Lemma \ref{E1Estimates} we show that higher part $\ol{\mc{E}}_1$ satisfies an improved bound relative to the whole term. 
 
 \begin{lemma} \label{Prop:VaritiationAtZero}
 The function $ \dot{\mc{h}}^\pm_{\delta, f} $  is well-defined away from the origin on $\mc{D}$ and satisfies
 \[
 \left. D H_\mc{D} \right|_{\mc{h}^+_{\delta, f}}\left( \dot{\mc{h}}^{\pm}_{ \delta, f} \right) =  \pm \left. D H \right|_{\mc{N}^+_{\delta, f}}\left( \log (r) e_z - \mc{h}_{\delta, f}^{\pm} \frac{e_r}{r}\right) , \quad \mr{\mc{tr}}\left(\dot{\mc{h}}^{\pm}_{ \delta, f} \right) = 0
 \]
 and the estimate
 \[
 \sup \left| \dot{\mc{h}}^{\pm}_{ \delta, f} (r, \theta)\right| \leq^{C\epsilon} r^{1 + \gamma}
 \]
  \end{lemma}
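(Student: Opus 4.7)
The plan is to parametrize the CMC surface $\mc{N}^+_{\tau, \delta, f}$ as a family of graphs over $\mc{D}$ and exploit that every member has mean curvature $\delta$ to derive a Jacobi-type equation for the $\tau$-derivative at $\tau = 0$. Let $\phi_0^{\tau, +}: \mc{D} \to \tilde{\mc{N}}^+_\tau$ denote the graph parametrization $(x, y) \mapsto (x, y, k_\tau(r))$, where $k_\tau(r) = \tau\,\text{arccosh}(r/\tau) - \tau\,\text{arccosh}(1/\tau)$ and $r = \sqrt{x^2+y^2}$. Define the pullback $\tilde{\mc{h}}^+_\tau := \mc{h}^+_{\tau, \delta, f} \circ \phi_0^{\tau, +}$, a function on $\mc{D}$ defined away from the origin. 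By Proposition \ref{Prop:ImprovedDecay} and standard elliptic regularity on compacts in $\mc{D}\setminus\{0\}$, $\tilde{\mc{h}}^+_\tau \to \mc{h}^+_{\delta, f}$ in $C^{2,\alpha/2}$, so the limit defining $\dot{\mc{h}}^+_{\delta, f} = \partial_\tau|_0 \tilde{\mc{h}}^+_\tau$ is identified with the one in the introduction.

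I would parametrize $\mc{N}^+_{\tau, \delta, f}$ over $\mc{D}$ by
\[
\tilde{p}_\tau(r, \theta) = \phi_0^{\tau, +}(r, \theta) + \tilde{\mc{h}}^+_\tau(r, \theta)\, N_\tau(r, \theta),
\]
where $N_\tau(r, \theta) = N_\mc{F}(\theta, s_\tau(r))$ with $s_\tau(r) = \text{arccosh}(r/\tau)$. By (\ref{CatUnitNormal}),
\[
N_\tau(r, \theta) = (\tau/r)\, e_r(\theta) - \sqrt{1 - \tau^2/r^2}\,e_z,
\]
hence $N_\tau|_{\tau=0} = -e_z$ and $\partial_\tau N_\tau|_0 = e_r/r$. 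A direct expansion of $k_\tau$ gives $\dot{k}_0 = \log(r)$. Combining, the variation field $V := \partial_\tau \tilde{p}_\tau|_0$ is a linear combination of $\log(r)\,e_z$, $\dot{\mc{h}}^+_{\delta, f}\,e_z$, and $\mc{h}^+_{\delta, f}\,e_r/r$. Since every $\tilde{p}_\tau$ lies on a CMC surface with the same mean curvature $\delta$, $V$ satisfies $\left. DH \right|_{\mc{N}^+_{\delta, f}}(V) = 0$ by (\ref{FirstVariation}) together with the constancy of $H$ on $\mc{N}^+_{\delta, f}$. Splitting off the $\dot{\mc{h}}^+_{\delta, f}\,e_z$ contribution and identifying $\left. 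DH \right|_{\mc{N}^+_{\delta, f}}(\dot{\mc{h}}^+_{\delta, f}\,e_z)$ with $\left. DH_\mc{D} \right|_{\mc{h}^+_{\delta, f}}(\dot{\mc{h}}^+_{\delta, f})$ through the standard correspondence between the graph and vectorial mean-curvature linearizations in the vertical direction yields the stated equation, with $\mp$ absorbing the opposite normal orientations of the two sheets $\mc{N}^\pm$.

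The boundary condition $\mr{\mc{tr}}(\dot{\mc{h}}^+_{\delta, f}) = 0$ follows because the restriction $\tilde{\mc{h}}^+_\tau|_{r=1}$ has $\tau$-independent higher part equal to $\mr{f}_+$, so differentiating in $\tau$ kills it. For the decay estimate, I note that the naively singular $\log(r)$ term in the source does not produce a singular solution: the leading part of $L_{\mc{N}^+_{\delta, f}}$ is $\Delta_\mc{D}$, and $\log(r)$ is harmonic on $\mc{D}\setminus\{0\}$. The true source is therefore $O(\epsilon)$, governed by the deviation of $L_{\mc{N}^+_{\delta, f}}$ from $\Delta_\mc{D}$ and by $|\mc{h}^+_{\delta, f}| \leq^{C\epsilon} r^2$ (from Proposition \ref{Prop:ImprovedDecay} together with Corollary \ref{Prop:DiskPDProblem}). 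Standard elliptic regularity on $\mc{D}$ combined with the normalization that kills the lower-mode harmonic Jacobi fields then yields the decay rate $r^{1+\gamma}$.

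The principal technical challenge is the computation of $\partial_\tau N_\tau|_0$: the cylinder parameter $s_\tau(r) \to \infty$ as $\tau \to 0$, so the natural parametrization of $\mc{F}$ degenerates, and one must work entirely in the graph parametrization over $\mc{D}$ to extract clean limits. It is precisely the identity $\cosh^{-1}(s_\tau(r)) = \tau/r$, applied to the $e_r$-component of the catenoid's unit normal, that generates the unusual radial term $\mc{h}^+_{\delta, f}\,e_r/r$ appearing in the statement.
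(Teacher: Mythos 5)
Your derivation of the limiting equation (the variation field $\log(r)\,e_z$ from the height $k_\tau$, the radial term $\mc{h}^+_{\delta,f}\,e_r/r$ from $\partial_\tau N_\tau|_0$, and splitting off the vertical graph part) is essentially the computation the paper performs, but your argument has a genuine gap at the very point the lemma is about: the well-definedness of $\dot{\mc{h}}^\pm_{\delta,f}$. You only invoke Proposition \ref{Prop:ImprovedDecay} to get that $\tilde{\mc{h}}^+_\tau \to \mc{h}^+_{\delta,f}$ in $C^{2,\alpha/2}$, which is an $o(1)$ statement; the existence of $\partial_\tau|_0\,\tilde{\mc{h}}^+_\tau$ requires convergence of the difference quotients $(\tilde{\mc{h}}^+_\tau - \mc{h}^+_{\delta,f})/\tau$, i.e.\ a quantitative $O(\tau)$ comparison. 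The paper gets exactly this from the local estimate (\ref{LimitEstimate}) established inside the proof of Proposition \ref{Prop:ImprovedDecay}, namely $\|(\mc{h}'-\mc{h}^*)|_{s_0}\|^{2,\alpha}\leq^{C\epsilon}\tau$, which bounds the difference quotients in $C^{2,\alpha}$ locally and lets one pass to the limit. Without such an estimate, defining $V=\partial_\tau\tilde p_\tau|_0$ and ``differentiating'' the identity $H(\tilde p_\tau)=\delta$ is circular: you are assuming the differentiability in $\tau$ that the lemma asserts. (The paper avoids this by working with difference quotients of the immersions and the integral form $\int_0^1 DH|_{\mc{N}(t)}$, then passing to the limit term by term.)

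The decay estimate is also not supported as written. You argue that the source is $O(\epsilon)$ and that ``normalization kills the lower-mode harmonic Jacobi fields,'' but you never establish that the limit $\dot{\mc{h}}^+_{\delta,f}$ is normalized at the origin, nor any a priori control near the puncture that would exclude the singular harmonics ($\log r$, $r^{-1}e^{\pm i\theta}$, \dots) which are perfectly compatible with your data on $\mc{D}\setminus\{0\}$; the waist normalization lives at $s=0$ on the collapsing neck and transferring it to the origin of the disk is precisely what needs uniform estimates. The paper instead deduces $|\dot{\mc{h}}^+_{\delta,f}|\leq^{C\epsilon} r^{1+\gamma}$ directly from the mode-split output of Proposition \ref{Prop:ImprovedDecay}: by (\ref{HHigherEsimate}) the lower part of $\mc{h}'-\mc{h}^*$ is $O(\tau\,\omega^{1+\gamma})$, so the lower part of the difference quotient is $O(r^{1+\gamma})$, while the higher part of the limit extends in $C^2$ across the origin with vanishing value and gradient there and is hence $O(r^2)$. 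To repair your proof you should import both the $O(\tau)$ estimate (\ref{LimitEstimate}) (for existence of the derivative) and the improved lower-mode estimate (\ref{HHigherEsimate}) (for the $r^{1+\gamma}$ bound), rather than appealing to elliptic regularity on the disk alone.
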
 
  
  \begin{remark}
  It is not immediately obvious that the terms on the right hand side of the equation satisfied by $\dot{\mc{h}}^{\pm}_{\delta, f}$ are uniformly bounded over the whole disk. It is possible to compute directly that it is, bounded, using the uniform bound $|\mc{h}| \leq^c r^2$.
    \end{remark}
 \begin{proof}
 Fix $\delta > 0$.  Then for $\tau < \delta$,  $\mc{N}^{+}_{\tau} \cap K$ is a normal graph over the domain $\mc{D}' : = \mc{D} \setminus B_{\delta} (0)$. and thus the mapping $\phi_{0}^{\tau, +}: \mc{D}' \rightarrow \mc{N}^+_{\tau}$ is defined. Recalling the notation used in the proof of Proposition \ref{Prop:ImprovedDecay} we have
 \begin{align*}
 \mc{h}' & : = \frac{\mc{h}_{0, \delta, f}^{\tau} \circ \ol{\mc{G}}_{\tau}}{\omega} \\
 &  = \frac{\mc{h}_{\tau, \delta, f} \circ \mc{G}_{\tau}}{\omega}
 \end{align*}
 and 
 \begin{align*}
 \mc{h}^* : = \frac{\mc{h}_{\delta, f} \circ \ol{\mc{G}}_\tau}{\omega}.
 \end{align*}
 Recalling the estimate (\ref{LimitEstimate}) we have
 \[
 \left\|\left( \left. \mc{h}' - \mc{h}^* \right)\right|_{s_0}\right\|^{2, \alpha} \leq^{C\epsilon} \tau
 \]
Thus, as functions on $\mc{D}$ we have the uniform bound 
 \[
 \left|\left. \left(\mc{h}^{\tau, +}_{0, \delta, f} - \mc{h}^+_{\delta, f}\right) \right|_{(r, \theta)}\right| \leq^{C\epsilon} \tau  \]
 as well as local uniform bounds  in $C^{2, \alpha}$. Thus, the difference quotients  $\frac{ \mc{h}^{\tau, +}_{0, \delta, f} - \mc{h}_{\delta, f+}}{\tau}$  and converge locally in $C^{2, \alpha/2}$ as $\tau \rightarrow 0$ to a limiting function $\dot{\mc{h}}^+_{\delta, f}$ satisfying the estimate:
  \[
   \left\|\dot{\mc{h}}^+_{\delta, f} \right\| \leq^{C\epsilon}  r^{1 + \gamma}
 \]
 Moreover since $\mc{N}_{\tau}^+$ is a smooth family with variation field $\dot{\mc{N}}^+$ at $\tau = 0$ given by $- \log(r) e_z$, the variation in the unit normal at $\tau = 0$ is $\dot{N}_{\mc{N}} = \frac{1}{r}e_r$. We then have
 \begin{align*}
 0 & = \frac{H(\mc{N}^+_{\tau, \delta, f}) - H(\mc{D}^+_{\delta, f})}{\tau} \\
 & = \int_0^{1} \left.  D H \right|_{\mc{N}(t)}\left( \frac{\mc{N}^+_{\tau, \delta, f}  - \mc{D}^+_{\delta, f}}{\tau}\right) \\
 & = \int_0^{1} \left.  D H \right|_{\mc{N}(t)}\left(\frac{\mc{N}_{\tau} - \mc{D}}{\tau} + \left(\frac{ \mc{h}^{+}_{\tau, \delta, f} - \mc{h}^+_{\delta, f}}{\tau}\right) e_z + \mc{h}^+_{\tau, \delta, f}\frac{N_{\mc{N}_\tau} - e_z}{\tau}  \right)
 \end{align*}
 where we have set $\mc{N}(t) = (1 -t) \mc{D}_{\delta, f} + t \mc{N}_{\tau, \delta, f}$.
 As $\tau = 0$ we have $\frac{\mc{N}_{\tau} - \mc{D}}{\tau} \rightarrow - \log(r) e_z$, $\left(\frac{ \mc{h}^{+}_{\tau, \delta, f} - \mc{h}^+_{\delta, f}}{\tau}\right) \rightarrow \dot{\mc{h}}^+_{\delta, f}$ and $\frac{N_{\mc{N}_\tau} - e_z}{\tau} \rightarrow \frac{1}{r} e_r$, and the operator  $\int_0^{1} \left.  D H \right|_{\mc{N}}(t) dt$ converges to $D H_{\mc{D}}(-)$. Thus, in the limit as $\tau = 0$, we have
\[
\left. DH_{\mc{D}} \right|_{\mc{h}^+_{\delta, f}}(\dot{\mc{h}}^+_{\delta, f}) + \left. DH_{\mc{D}} \right|_{\mc{h}^+_{\delta, f}}\left( - \log(r) e_z + \mc{h}^+_{\delta, f} \frac{1}{r} e_r\right) = 0, \quad \mr{\mc{tr}} (\dot{\mc{h}}^+_{\delta, f}) = 0.
\]
Finally, we observe that by (\ref{HHigherEsimate}), the higher part of $\dot{\mc{h}}^{+}_{\delta, f}$ satisfies the improved bound 
\[
\left|\ol{\dot{\mc{h}}^{+}_{\delta, f}} \right| \leq^{C\epsilon} r^{1 + \gamma}
\]
The function $\dot{\mc{h}}^{+}_{\delta, f}$ then extends to the disk in $C^2$ and  the higher part satisfies $\mr{\mc{h}} (0) = \nabla \mr{\mc{h}}  (0) = 0$ and thus
\[
\left| \mr{\mc{h}}(r, \theta)\right| \leq^{C\epsilon} r^2.
\]
It then follows that:
\[
\left|\dot{\mc{h}}^{+}_{\delta, f} \right| \leq^{C\epsilon} r^{1 + \gamma}. 
\]
This completes the proof. 
 \end{proof}

\begin{lemma} \label{Prop:TauVariationEquation}
It holds that 
\[
\left. L'_\mc{F}\right|_{\mc{h}'}\left(\frac{\dot{\mc{h}}}{\omega} \right)  = \mc{E}_1+ \mc{E}_2
\]
where
\[
\left\| \mc{E}_2 \right\|^{0, \alpha, \gamma} \leq^C \epsilon
\]
 and where
 \[
  \mc{E}_1 : = \left(\dot{\lambda} + 1 \right) \left. D^2 H' \right|_{\mc{F}}\left(\frac{\xi}{\tau_0}, \mc{h}'_{\tau, \delta, f} \right) 
 \]
 and where above $\xi = s \tanh(s) -1$.
\end{lemma}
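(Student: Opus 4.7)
The plan is to pull back the defining equation $H_{\mc{G}_\tau}(\mc{h}_{\tau,\delta,f})=\delta$ by $\phi_{\tau_0}^{\tau}$ so that it holds on the fixed parameter domain $\mc{N}_{\tau_0}^*$, then differentiate in $\tau$ at $\tau=\tau_0$ and translate the result into weighted form via the scaling identity (\ref{WeightedOperatorScaling}). Since mean curvature is a geometric invariant, the pullback reads $H_{\mc{G}_{\tau_0}^\tau}(\mc{h}_{\tau_0,\delta,f}^\tau)=\delta$. Viewing $H(F,u)$ as the smooth mean-curvature functional of the pair $(F,u)$ (Section \ref{Sec:GeometricObjects}), the chain rule yields
\[
\partial_F H\big|_{(\mc{G}_{\tau_0},\mc{h}_{\tau_0,\delta,f})}(\dot{\mc{G}}_{\tau_0})+\partial_u H\big|_{(\mc{G}_{\tau_0},\mc{h}_{\tau_0,\delta,f})}(\dot{\mc{h}})=0,
\]
with $\dot{\mc{G}}_{\tau_0}$ supplied by (\ref{GtotalVariation}).

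\textbf{Converting to weighted form.} Multiplying by $\omega_{\tau_0}$ and applying (\ref{WeightedOperatorScaling}), the second term becomes $L'_\mc{F}\big|_{\mc{h}'}(\dot{\mc{h}}/\omega_{\tau_0})$, up to a commutator of the weight with the linearization which, since $\|\mc{h}'\|^{2,\alpha,1}_{\tau_0}\leq^C\epsilon$ by Proposition \ref{Prop:ImprovedDecay}, contributes only an $\mc{E}_2$-sized remainder in $W^{0,\alpha,\gamma}_{\tau_0}$. For the first term, substitute $\dot{\mc{G}}_{\tau_0}=(\tau_0\dot\lambda+1)\xi N_\mc{F}-\dot\lambda\tau_0\mc{F}$: the scaling piece $-\dot\lambda\tau_0\mc{F}$ and every other factor of $\dot\lambda$ carry the coefficient $\dot\lambda\to 0$ (from the remark after (\ref{CatParamDif})) and fit into $\mc{E}_2$; the tangential component produces a term proportional to $\nabla H_{\mc{G}_{\tau_0,\delta,f}}$, which vanishes by the CMC condition.

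\textbf{Identifying $\mc{E}_1$.} The remaining dominant contribution is the response of $H'_\mc{F}$ to the normal Jacobi-field perturbation $\xi N_\mc{F}$ of the immersion. Because $\mc{F}$ is minimal and $\xi=s\tanh(s)-1$ is a Jacobi field --- the normal component of the dilation vector field, from the catalog of geometric Jacobi fields --- the linear-in-$\xi$ effect $L_\mc{F}\xi$ vanishes identically, so no first-order term survives at $u=0$. The leading surviving contribution at $u=\mc{h}'$ is therefore the bilinear coupling $D^2H'\big|_\mc{F}(\xi/\tau_0,\mc{h}'_{\tau,\delta,f})$ from the joint Taylor expansion of $H'_F(u)$ about $(\mc{F},0)$; the factor $1/\tau_0$ records the conversion between the $\mc{G}_{\tau_0}=\tau_0\mc{F}$-variation and the reference operator $H'_\mc{F}$ via (\ref{WeightedOperatorScaling}). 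Bundling with the overall coefficient $(\dot\lambda+1)$ recovers $\mc{E}_1$. The residual $\mc{E}_2$ collects the tangential reparametrization correction (bounded using (\ref{DotPhiExpression}) and Proposition \ref{Prop:ImprovedDecay}), the $\dot\lambda$-proportional corrections, and the higher-order Taylor remainders controlled by the uniform $C^2$ smoothness of $H'_\mc{F}$ from Proposition \ref{Prop:SmoothnessinWeightedSpaces}; each is $O(\epsilon)$ in $W^{0,\alpha,\gamma}_{\tau_0}$.

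\textbf{Main obstacle.} The hardest part is the careful bookkeeping needed to confirm that the first-order-in-$\xi$ contribution cancels identically --- i.e., to leverage both the Jacobi-field property $L_\mc{F}\xi=0$ and the minimality of $\mc{F}$ so that only the bilinear $D^2 H'$ term survives --- and to extract precisely the coefficients $(\dot\lambda+1)$ and $\xi/\tau_0$ through the various scaling and weight-conversion steps. In particular, one must argue that exactly one power of $\tau_0$ is absorbed into the weight when converting $\omega_{\tau_0}\,\partial_F H$ into the operator $D^2 H'\big|_\mc{F}$, producing the $\xi/\tau_0$ factor rather than $\xi$ or $\tau_0\xi$.
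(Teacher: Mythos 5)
Your overall route is the same as the paper's: pull the CMC equation back to the fixed domain via $\phi_{\tau_0}^{\tau}$, differentiate at $\tau=\tau_0$ using the variation field (\ref{GtotalVariation}), use that $\xi$ is a Jacobi field, so $\left. D H\right|_{\mc{F}}(\xi N_{\mc{F}})=0$ and the leading surviving term is the bilinear one defining $\mc{E}_1$, and collect everything else into $\mc{E}_2$. One small inaccuracy: the conversion to weighted form is an exact identity, $\left. L'_{\mc{F}}\right|_{\mc{h}'}\bigl(\dot{\mc{h}}/\omega\bigr)=\cosh(s)\left. DH\right|_{\mc{G}/\tau_0}\bigl(\tfrac{\dot{\mc{h}}}{\tau_0}N_{\mc{F}}\bigr)$, by the definition $H'_{\mc{F}}(u)=\cosh(s)H_{\mc{F}}(\cosh(s)u)$ and degree $-1$ homogeneity; there is no commutator remainder to absorb into $\mc{E}_2$.

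The genuine gap is in your control of the Taylor remainder, i.e.\ the error made when replacing $\cosh(s)\left. DH\right|_{\mc{G}/\tau_0}\bigl(\tfrac{\xi}{\tau_0}N\bigr)$ by $\mc{E}_1$. You claim it is "controlled by the uniform $C^2$ smoothness of $H'_{\mc{F}}$ from Proposition \ref{Prop:SmoothnessinWeightedSpaces}", but that proposition bounds derivatives on a small ball of graph functions, whereas here the perturbation is of the immersion by $\Xi/\tau_0=\xi N_{\mc{F}}/\tau_0$, which is not small in any relevant space: localized at $s_0$ it has size of order $(1+|s_0|)/\omega_0$, since $\xi=s\tanh(s)-1$ grows linearly. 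A naive remainder estimate then yields a bound of order $\epsilon^2(1+|s_0|)\,\omega_0$, and after dividing by $\omega_0^{\gamma}$ the supremum over $s_0$ is of order $\epsilon^2 l_{\tau}\approx\epsilon^{2}\log(1/\tau)$ near the boundary, which is not uniformly $\leq^{C}\epsilon$ as $\tau\to 0$. The paper closes exactly this gap by using that the mean curvature depends only on $\nabla F$ and $\nabla^2 F$ (translation invariance), so only $\nabla\Xi$ enters; since $\left\| \left. \nabla N_{\mc{F}}\right|_{s_0}\right\|\leq^{C}\cosh^{-1}(s_0)$ offsets the growth of $\xi$, one has $\left\|\left.\nabla\Xi\right|_{s_0}\right\|^{1,\alpha}\leq C$ uniformly, and combined with the weight-one bound $\left\|\left.\mc{h}'\right|_{s_0}\right\|\leq^{C}\epsilon\,\omega_0$ from Proposition \ref{Prop:ImprovedDecay} this gives $\left\|\left. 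R\right|_{s_0}\right\|\leq^{C}\epsilon^2\omega_0$. A similar quantitative localization is what actually handles your "tangential reparametrization correction" (the $\mc{h}\dot{N}_{\mc{G}}$ term): appealing to constancy of $H_{\mc{G}_{\tau_0,\delta,f}}$ only kills the component tangential to the perturbed surface, and the paper instead uses $\left. DH\right|_{\mc{F}}(\mc{h}\dot{N})=\mc{h}\nabla_{\dot{N}}H_{\mc{F}}=0$ together with $\left\|\left.\dot{N}_{\mc{G}}\right|_{s_0}\right\|\leq^{C}1/\omega_0$ to show this piece is quadratically small. Without these ingredients your $\mc{E}_2$ estimate does not close.
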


\begin{proof}

Set
\[
\mc{G} = \mc{G}_{\tau_0, \delta, f}^{\tau} = \mc{G}_{\tau, \delta, f} \circ \phi_{\tau_0}^{\tau}, \quad \mc{G}_0 : = \mc{G}_{\tau_0, 0, 0}^{\tau}, \quad \mc{h} : = \mc{h}_{\tau_0, \delta, f}^{\tau}, \quad \mc{h}' = \mc{h}/\omega,
\]
and  for $s_0 \in \mb{R}$ and with $\omega_{0} = \omega_{\tau} (s_0)$ set
\[
\left. \tilde{\mc{G}} \right|_{s_0} := \left. \wt{\mc{F}} \right|_{s_0} + \mc{c}_{s_0} \mc{h}' \left. N_{\mc{F}} \right|_{s_0}.
\]
When $\tau  = \tau_0$ we have $\mc{G} = \mc{G}_{\tau_0, \delta, f}$, $\mc{h} = \mc{h}_{\tau_0, \delta, f}$ and $\mc{G}_0 = \mc{G}_{\tau_0} = \tau_0 \mc{F}$. We also have the estimate
\[
\left\|\left. \wt{\mc{G}} \right|_{s_0} - \left. \tilde{\mc{F}} \right|_{s_0} \right\|^{2, \alpha} \leq^C \left\| \left. \mc{h}' \right|_{s_0}\right\| \leq^C \omega_0.
\]
Observe that by construction we have
\[
H \left(\mc{G}\right) =\delta. 
\]
Differentiating gives
\begin{align*}
\left. D H \right|_{\mc{G}}\left( \dot{\mc{G}}\right) = \frac{1}{\tau_0} \left. D H \right|_{\mc{G}/\tau_0}\left( \dot{\mc{G}}/\tau_0\right) =   0,
\end{align*}
where above $\dot{\mc{G}} :=  \left.\frac{d}{d \tau} \mc{G} \right|_{\tau = \tau_0}$.
From (\ref{GtotalVariation}) we have
\[
\dot{\mc{G}} = \left(\dot{\lambda}\tau + 1 \right)\xi N_{\mc{G}} + \mc{h} \dot{N_{\mc{G}}} + \dot{\mc{h}}N_{\mc{G}_0} -   \dot{\lambda} \tau \mc{F}
\]
At $\tau = \tau_0$ we have $\mc{G}/\tau_0 = \mc{F} + \frac{\mc{h}}{\tau_0} N_{\mc{F}}$. This then gives
\begin{align*}
\left. L'_\mc{F}\right|_{\mc{h}'}\left(\frac{\dot{\mc{h}}}{\omega} \right) & = \cosh(s) \left. L_{\mc{F}} \right|_{ \frac{\mc{h}}{\tau_0}} \left( \frac{\dot{\mc{h}}}{\tau_0}\right) \\
& = \cosh(s) \left. DH \right|_{\mc{G}/\tau_0} \left( \frac{\dot{\mc{h}}}{\tau_0} N_\mc{F}\right).
\end{align*}
We thus have
\begin{align*} \label{Eq:Decomposition}
\left. L'_\mc{F}\right|_{\mc{h}'}\left(\frac{\dot{\mc{h}}}{\omega} \right)    & =  - \cosh(s)\dot{\lambda}  \left. D H \right|_{\mc{G}/\tau_0}\left( \mc{F} \right)  -  \left(\tau\dot{\lambda} + 1 \right)\cosh(s)\left. D H \right|_{\mc{G}/\tau_0}\left( \frac{\xi}{\tau_0} N\right) \\
& \quad -  \cosh(s)\left. D H \right|_{\mc{G}/\tau_0}\left(  \frac{\mc{h}}{\tau_0} \dot{N}\right)\\
& = T_1 +T_2 + T_3.
\end{align*}
We will estimate the terms $T_1$, $T_2$, and $T_3$ above separately.  First, observe that 
\[
\left. D H \right|_{\mc{F}} (\xi N_{\mc{F}}) = \left. D H_{\mc{F}} \right|_{0}\left( \xi\right) = 0.
\]
This then gives
\begin{align*}
\cosh(s)\left. D H \right|_{\mc{G}/\tau_0} \left(\frac{\xi}{\tau_0} N \right) 
& =  \cosh(s)  \left. D^2 H \right|_{\mc{F}} \left(\frac{\xi}{\tau_0} N, \frac{\mc{h}}{\tau_0} N \right) + R \\
&  : = \mc{E}_1 + R,
\end{align*}
where $\mc{E}_1$ and $R$ are defined implicitly above. By definition we have
\begin{align*}
\mc{E}_1  &  = \cosh(s)  \left. D^2 H \right|_{\mc{F}} \left(\frac{\xi}{\tau_0} N, \frac{\mc{h}}{\tau_0} N \right) \\ & = D^{2} H'_{\mc{F}} \left(\frac{\xi}{\omega}, \mc{h}' \right).
\end{align*}
With 
\[
\mc{E}_2 = T_1 + T_3 + R,
\]
we then have 
\[
\left. L'_\mc{F}\right|_{\mc{h}'}\left(\frac{\dot{\mc{h}}}{\omega} \right) = \mc{E}_1 + \mc{E}_2.
\]
It remains to estimate $\mc{E}_2$.  We will estimate the terms $T_1$, $T_3$ and $R$ above separately.  Fix  $s_0  \in \mb{R}$.  To estimate $T_1$, observe that since $\mc{F}$ is minimal we have $D H_{\mc{F}} (\mc{F}) = 0$. Then
\begin{align*}
 \cosh(s)\left. \left. D H \right|_{\mc{G}/\tau_0} \left( \mc{F}\right) \right|_{s_0} & =   \mc{c}_{s_0} \left. D H \right|_{\frac{\left. \mc{G} \right|_{s_0}}{\omega_0}} \left( \frac{\left. \mc{F} \right|_{s_0}}{\cosh(s_0)}\right) \\
 & =  \mc{c}_{s_0} \left. D H \right|_{\left. \wt{\mc{G}} \right|_{s_0}} \left( \left. \wt{\mc{F}} \right|_{s_0}\right) \end{align*}
and thus
\[
\left\| \cosh(s)\left. \left. D H \right|_{\mc{G}/\tau_0} \left( \mc{F}\right) \right|_{s_0} \right\|^{0, \alpha} \leq^C \epsilon.
\]
Since $\dot{\lambda} \rightarrow 0$ as $\tau_0 \rightarrow 0$, we have:
\begin{align}\label{T1Estimate}
\left\| T_1 \right\|^{0, \alpha, 1}_{\tau_0} \rightarrow 0
\end{align}
as $\tau_0 \rightarrow 0$.
Next, we estimate the term $T_3$.  Observe that since $\dot{N} = \dot{N}_{\mc{G}_0}$ is tangential to $\mc{F}$ and $\mc{F}$ is minimal  we have 
\[
 \left. D H \right|_{\mc{F}}\left(  \mc{h} \dot{N}\right) = \mc{h}\nabla_{\dot{N}} H_{\mc{F}}= 0.
 \]
with $\mc{c}_0 : =\mc{c}_{s_0}$ this gives:
\begin{align*}
\cosh(s) \left. \left. D H \right|_{\mc{G}/\tau_0}\left(  \frac{\mc{h}}{\tau_0} \dot{N}\right) \right|_{s_0} & =  \mc{c}_{s_0} \left. D H \right|_{\left. \wt{\mc{G}}\right|_{s_0}}\left( \mc{c}_{0} \left. \mc{h}' \right|_{s_0} \left. \dot{N} \right|_{s_0}\right)   \\
& = \mc{c}_{s_0}  \left. D H \right|_{\left. \wt{\mc{G}}\right|_{s_0}}\left( \mc{c}_{0} \left. \mc{h}' \right|_{s_0} \left. \dot{N} \right|_{s_0}\right)    -   \mc{c}_{s_0} \left. D H \right|_{\left. \tilde{\mc{F}} \right|_{s_0}}\left( \mc{c}_{0} \left. \mc{h}' \right|_{s_0} \left. \dot{N} \right|_{s_0}\right) \end{align*}
and thus
\begin{align} \label{Eq:EasierEstiamate}
\left\| \left. T_3 \right|_{s_0}\right\|^{0, \alpha} \leq^C \left\| \left. \mc{h}' \right|_{s_0}\right\|^2 \left\|\left. \dot{N} \right|_{s_0} \right\|  \leq^C \epsilon^2 \omega_0.
\end{align}
Here we have used the expression for $\dot{\phi}_{\tau}$ in (\ref{DotPhiExpression}) to conclude that $\dot{N}_{\mc{G}} = \frac{\partial N_{\mc{F}}}{\partial s} \dot{\phi}_{\tau_0}$ satisfies the bound
\[
\left\|\left.  \dot{N}_{\mc{G}}\right|_{s_0}\right\| \leq^C \frac{1}{\omega_0}.
\]
 To estimate the term $R$, using the integral form of the Taylor remainder and setting $\Xi = \xi N$ we can write explicitly that
\begin{align*}
 R  & :  = \cosh(s) \int^{1}_0 \frac{(1 - t )^2}{2} \left.  D^{3} H \right|_{F_t} \left( \frac{\Xi}{\tau_0},\mc{h} N_{\mc{F}},  \mc{h} N_{\mc{F}}\right)  dt
\end{align*}
where above we have set  $\mc{F}_t : =\mc{F} + t \mc{h} N_{\mc{F}}$. Thus, with  $\wt{\mc{F}}_t : =\left. \wt{\mc{F}} \right|_{s_0} + t \mc{c}_0\mc{h}' N_{\mc{F}}$, we get
\[
\left. R \right|_{s_0} = \mc{c}_0  \int^{1}_0 \frac{(1 - t )^2}{2} \left.  D^{3} H \right|_{\wt{F}_t} \left( \frac{\Xi}{\omega_0},\mc{c}_0 \left. \mc{h}' \right|_{s_0} \left.  N_{\mc{F}} \right|_{s_0},  \mc{c}_0 \left. \mc{h'} \right|_{s_0}  \left. N_{\mc{F}} \right|_{s_0}\right) 
\]

 In order estimate this term, we recall the basic observation made in Section \ref{Sec:GeometricObjects} that the mean curvature operator depends only on first and second derivatives of an immersion or; equivalently, that it is invariant under translations. This is needed since, although the vector field $\Xi$ is large, we have
\begin{align}\label{XiBound}
\left\|\left. \nabla  \Xi \right|_{s_0}\right\|^{1, \alpha} \leq C
\end{align}
for a universal constant $C$. To see this, observe that from the expression for the unit normal $N_\mc{F}$ to $\mc{F}$  in (\ref{CatUnitNormal}) we see
\[
\left\| \left. | \nabla N_{\mc{F}} \right|_{s_0}\right\| \leq^C \cosh^{-1} (s_0).
\]
Since  $\nabla \Xi = \nabla \xi N_{\mc{F}} + \xi \nabla N_{\mc{F}}$ and since $\|\nabla \xi\|^{1, \alpha} \leq C$, we get (\ref{XiBound}).  We thus have
\begin{align} \label{Eq:REstimate}
\left\|\left. R \right|_{s_0} \right\|^{0, \alpha} & \leq^C \left\| \nabla\left( \left. \frac{\Xi}{\omega_0}  \right|_{s_0} \right)\right\|^{1, \alpha} \left(\| \left. \mc{h}' \right|_{s_0}\|^{2, \alpha} \right)^2 \\ \notag
& \leq^C \left(\frac{1}{\omega_0} \right) \left(\epsilon^2\omega^2_0 \right) \\ \notag
& \leq^C \epsilon^2 \omega_0 \notag
\end{align}
Combining this  with the estimates for $T_1$ and $T_3$ in (\ref{T1Estimate}) and (\ref{Eq:EasierEstiamate}) gives. 
\begin{align*}
\left\| \mc{E}_2\right\|^{0, \alpha, 1}_{\tau} \leq^C \epsilon.
\end{align*}

\end{proof}

\begin{lemma}\label{E1Estimates}
It holds that 
\[
\left\| \mc{E}_1\right\|^{0, \alpha, -1}_{1} \leq^C \epsilon, \quad \left\| \ol{\mc{E}}_1\right\|^{0, \alpha, 0}_{1} \leq^{C\epsilon} \tau^\gamma.
\]
\end{lemma}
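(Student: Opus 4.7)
The plan is to localize $\mc{E}_1$ using the scaling identity (\ref{WeightedOperatorScaling}); differentiating twice at $u=0$ gives
\[
\left.\mc{E}_1\right|_{s_0} = (\dot\lambda\tau+1)\,\mc{c}_{s_0}\,\left. D^2 H_{\wt{\mc{F}}|_{s_0}}\right|_0 \bigl(\mc{c}_{s_0}(\xi/\omega)|_{s_0},\,\mc{c}_{s_0}\mc{h}'|_{s_0}\bigr).
\]
The normalized localizations $\wt{\mc{F}}|_{s_0}$ converge in $C^k(\Omega_0)$ to the flat conformal parametrization $\wt{\mc{F}}|_\infty(x,s)=e^s e_r(x)$ at exponential rate $\|\wt{\mc{F}}|_{s_0}-\wt{\mc{F}}|_\infty\|^{3,\alpha}\leq^C e^{-s_0}$. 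Since the graph mean curvature over the flat plane equals $\operatorname{div}(\nabla f/\sqrt{1+|\nabla f|^2})=\Delta f+O(f^3)$, containing no quadratic term in $f$, we have $\left. D^2 H_{\wt{\mc{F}}|_\infty}\right|_0 = 0$; smoothness of $F\mapsto \left. D^2 H_F\right|_0$ in the immersion (Section \ref{Sec:GeometricObjects}) then yields
\[
\left\|\left. D^2 H_{\wt{\mc{F}}|_{s_0}}\right|_0\right\|_{\mathrm{op}} \leq^C e^{-s_0}.
\]

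For the first estimate I compute explicitly $\mc{c}_{s_0}(\xi/\omega)|_{s_0}(s)=((s+s_0)\tanh(s+s_0)-1)/(\tau\cosh(s_0))$, giving $\|\mc{c}_{s_0}(\xi/\omega)|_{s_0}\|^{2,\alpha}\leq^C(1+|s_0|)/\omega_0$, while Proposition \ref{Prop:ImprovedDecay} supplies $\|\mc{c}_{s_0}\mc{h}'|_{s_0}\|^{2,\alpha}\leq^C\epsilon\omega_0$. Combining with the operator bound produces $\|\mc{E}_1|_{s_0}\|^{0,\alpha}\leq^C\epsilon(1+|s_0|)e^{-s_0}$, and multiplying by $\cosh(s_0)$ and taking the supremum over $s_0$ gives $\|\mc{E}_1\|^{0,\alpha,-1}_1\leq^C\epsilon$ after absorbing the polynomial $(1+|s_0|)$ against the exponential decay.

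For the second estimate the crucial observation is the mode-preserving property from Section \ref{Sec:ModePreservation}: since $\xi/\omega$ is rotationally symmetric, the bilinear form $\left. D^2 H'_\mc{F}\right|_0(\xi/\omega,\cdot)$ preserves Fourier modes in $x$, and so $\ol{\mc{E}}_1=\left. D^2 H'_\mc{F}\right|_0(\xi/\omega,\ol{\mc{h}'})$. Invoking the sharper bound $\|\ol{\mc{h}'}\|^{2,\alpha,\gamma}_\tau\leq^C\epsilon\tau$ from Proposition \ref{Prop:ImprovedDecay} in place of the full bound on $\mc{h}'$, the same localized analysis produces $\|\ol{\mc{E}}_1|_{s_0}\|^{0,\alpha}\leq^C\epsilon\tau^\gamma(1+|s_0|)e^{(\gamma-2)s_0}$ upon using $\omega_0\sim\tau\cosh(s_0)$; since $\gamma\in(0,1)$ makes the exponent $\gamma-2$ negative, the supremum in $s_0$ is finite and we obtain $\|\ol{\mc{E}}_1\|^{0,\alpha,0}_1\leq^{C\epsilon}\tau^\gamma$.

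The principal obstacle is proving the operator bound $\|\left. D^2 H_{\wt{\mc{F}}|_{s_0}}\right|_0\|_{\mathrm{op}}\leq^C e^{-s_0}$, which rests on identifying the correct flat limit of the normalized catenoid charts and on the non-trivial vanishing of the second Frechet derivative of the graph mean curvature operator at affine (flat-plane) immersions. The mode-preserving argument is equally essential for the second bound, since without it one cannot exploit the stronger $O(\tau\omega_0^\gamma)$ bound on the low-mode part $\ol{\mc{h}'}$ in place of the $O(\omega_0)$ bound on $\mc{h}'$.
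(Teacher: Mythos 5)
Your overall strategy coincides with the paper's: localize $\mc{E}_1$, use the vanishing of the Hessian of the graph mean curvature operator at the flat limit $\left.\wt{\mc{F}}\right|_{\infty}(x,s)=e^{s}e_r(x)$ together with $\|\left.\wt{\mc{F}}\right|_{s_0}-\left.\wt{\mc{F}}\right|_{\infty}\|\leq^C\cosh^{-1}(s_0)$ to get exponential decay of $\left. D^2H_{\left.\wt{\mc{F}}\right|_{s_0}}\right|_0$, invoke Proposition \ref{Prop:ImprovedDecay} for the two arguments, and use the mode--preserving property to reduce $\ol{\mc{E}}_1$ to the contribution of $\ol{\mc{h}'}$. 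Your treatment of the second estimate is correct: the extra polynomial factor $(1+|s_0|)$ you carry is harmless there because the exponent $\gamma-2<0$ leaves room to absorb it.

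The first estimate, however, has a genuine gap. You bound the first argument by $\|\mc{c}_{s_0}(\xi/\omega)|_{s_0}\|^{2,\alpha}\leq^C(1+|s_0|)/\omega_0$, keeping the linear growth of $\xi=s\tanh(s)-1$, and then claim the factor $(1+|s_0|)$ is absorbed by the exponential decay. At the borderline weight $\gamma=-1$ it is not: the norm $\|\mc{E}_1\|^{0,\alpha,-1}_1$ multiplies your local bound $\epsilon(1+|s_0|)e^{-s_0}$ by $\cosh(s_0)\sim e^{s_0}/2$, so the decay of the operator norm is exactly cancelled by the weight and what remains is $\sup_{|s_0|\leq l_\tau-1/2}\epsilon(1+|s_0|)\approx\epsilon\log(1/\tau)$, not a uniform constant. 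This logarithmic loss defeats the purpose of the lemma (uniformity in $\tau$ is what is needed downstream in Lemma \ref{Eq:HDotTotalBound}). The paper removes the factor $|s_0|$ by passing to the vector--field form of the second variation, with first argument $\Xi/\omega_0$, $\Xi=\xi N_{\mc{F}}$, and using translation invariance of the mean curvature: only $\nabla(\Xi/\omega_0)$ enters the estimate, and $\nabla\Xi=\nabla\xi\, N_{\mc{F}}+\xi\,\nabla N_{\mc{F}}$ is uniformly bounded (estimate (\ref{XiBound})) because $\|\left.\nabla N_{\mc{F}}\right|_{s_0}\|\leq^C\cosh^{-1}(s_0)$ cancels the linear growth of $\xi$. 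With $\|\nabla(\Xi/\omega_0)|_{s_0}\|^{1,\alpha}\leq^C 1/\omega_0$ in place of your $(1+|s_0|)/\omega_0$, the local bound becomes $\|\left.\mc{E}_1\right|_{s_0}\|^{0,\alpha}\leq^C\epsilon\cosh^{-1}(s_0)$ and the stated estimate $\|\mc{E}_1\|^{0,\alpha,-1}_1\leq^C\epsilon$ follows; without this ingredient (or some substitute that kills the factor $|s_0|$) your argument does not deliver the claimed bound.
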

\begin{proof}
 We will use the notation from the proof of  Lemma \ref{Prop:TauVariationEquation}. We have
\begin{align*}
\left. \mc{E}_1 \right|_{s_0} & =   \left.   D^{2} H'_{\mc{F}} \left(\frac{\xi}{\tau_0}, \mc{h}' \right) \right|_{s_0} \\
& = \mc{c}_0  D^2 H_{\left. \wt{\mc{F}} \right|_{s_0}}\left(\frac{\left. \xi \right|_{s_0}}{\omega_0}, \mc{c}_0 \left. \mc{h}' \right|_{s_0} \right) \\
& =  \mc{c}_{0}  \left. D^2 H\right|_{\left. \wt{\mc{F}} \right|_{s_0} }  \left(\left.\frac{\Xi}{\omega_0} \right|_{s_0}, \mc{c}_{0}\left. \mc{h}' \right|_{s_0} \left. N_{\mc{F}} \right|_{s_0}\right).
\end{align*}
Recall that $\left. \wt{\mc{F}}\right|_{\infty} (x, s) = e^s e_r(x)$ and thus:
\[
 \left. D^2 H_{\left. \wt{\mc{F}} \right|_{\infty}}\right|_{0} = 0.
 \]
 Using the estimate
 \begin{align} \label{FDifEst}
 \left\| \left. \wt{\mc{F}} \right|_{s_0} -\left.  \wt{\mc{F}} \right|_{\infty} \right\| \leq^C \cosh^{-1} (s_0) ,
\end{align}
 the bound (\ref{XiBound}) and the smoothness of the mean curvature on $R^{k, \alpha} (\Omega_0)$ gives
\begin{align*}
\left\| \left. \mc{E}_1 \right|_{s_0} \right\|^{0, \alpha} & \leq^C \left\| \left. D^2 H \right|_{\left. \wt{\mc{F}} \right|_{s_0}}  \right\|\left\| \nabla \left(\frac{\left. \Xi \right|_{s_0}}{\omega_0}\right)\right\|^{1, \alpha} \left\|\left.  \mc{h}' \right|_{s_0} \right\|^{2, \alpha} \\
& \leq^C \left( \cosh^{-1} (s_0)\right)\left(\frac{1}{\omega_0} \right) \left( \epsilon \omega_0\right) \\
& \leq^C \epsilon \cosh^{-1} (s_0).
\end{align*}
This completes the estimate for $\mc{E}_1$. To estimate $\ol{\mc{E}_1}$, write:
\begin{align*}
\left. D^2 H_{\left. \wt{\mc{F}} \right|_{s_0}} \right|_{0 }  \left(\frac{ \left. \xi \right|_{s_0}}{\omega_0}, \mc{c}_{s_0}
\left. \mc{h}' \right|_{s_0} \right) & = \left. D^2 H_{\left. \wt{\mc{F}} \right|_{s_0}} \right|_{0 }  \left(\frac{ \left. \xi \right|_{s_0}}{\omega_0}, \mc{c}_{s_0}
\left. \ol{\mc{h}'} \right|_{s_0} \right) + \left. D^2 H_{\left. \wt{\mc{F}} \right|_{s_0}} \right|_{0 }  \left(\frac{ \left. \xi \right|_{s_0}}{\omega_0}, \mc{c}_{s_0}
\left. \mr{ \mc{h}}' \right|_{s_0} \right) \\
& = T_1 + T_2
\end{align*}
where above $ \ol{\mc{h}'} $ and $ \mr{\mc{h}'}$ denote the higher and lower parts of $\mc{h}'$ respectively. From the estimate for $ \ol{\mc{h}'} $ in Proposition \ref{Prop:ImprovedDecay}, we have that 
\begin{align*}
\left\| T_1  \right\| & \leq^C \left\|\left. D^2 H \right|_{\left. \wt{F} \right|_{s_0}} \right\| \left(\frac{1}{\omega_0} \right)\left( \tau\omega_0^\gamma\right)\\
& \leq^C \tau^{\gamma} 
\end{align*}
where above we have used  (\ref{FDifEst}) to bound $\left\|\left. D^2 H \right|_{\left. \wt{F} \right|_{s_0}} \right\|$.
We now claim
\begin{align}\label{T2Orthogonality}
\ol{T}_2 = 0.
\end{align}
This follows from the fact that, since $\mc{F}$ is rotationally symmetric,  the operator $\left. D^2 H_{\left. \wt{\mc{F}} \right|_{s_0}} \right|_{0 }  \left(\frac{ \left. \xi \right|_{s_0}}{\omega_0}, - \right)$ is mode preserving (recall Section \ref{Sec:ModePreservation}) and the fact that $\mr{\mc{h}}'$ is orthogonal to the lower modes. This completes the proof. 
\end{proof}

\begin{lemma} \label{Eq:HDotTotalBound}
There are functions $\dot{\mc{h}}_1$ and $\dot{\mc{h}}_2$ such that 
\[
\left\| \dot{\mc{h}}'_1\right\|^{2, \alpha, -\gamma}_{1} \leq^C \epsilon, \quad \left\| \dot{\mc{h}}'_2\right\|^{2, \alpha, \gamma}_{\tau} \leq^C \epsilon 
\]
and such that $ \dot{\mc{h}} = \omega\left(\dot{\mc{h}}'_1  + \dot{\mc{h}}'_2 \right)$.
\end{lemma}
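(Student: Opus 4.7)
The plan is to split $\dot{\mc{h}}/\omega$ into two pieces corresponding to the decomposition of the source term $\mc{E}_1 + \mc{E}_2$ from Lemmas \ref{Prop:TauVariationEquation} and \ref{E1Estimates}. The higher mode part $\mr{\mc{E}}_1$ of $\mc{E}_1$ decays exponentially and will be handled by Proposition \ref{Prop:DecayInvertibility}; the lower part $\ol{\mc{E}}_1$ together with $\mc{E}_2$ is small enough to be solved via Proposition \ref{Prop:WeightedInvertibilty} in the mildly growing weighted space $W^{2,\alpha,\gamma}_\tau$.

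First, since $\mr{\mc{E}}_1$ has vanishing lower modes and $\|\mr{\mc{E}}_1\|^{0,\alpha,-1}_1 \leq^C \epsilon$ by Lemma \ref{E1Estimates} (hence also $\|\mr{\mc{E}}_1\|^{0,\alpha,-\gamma}_1 \leq^C \epsilon$ for $\gamma\in(0,1)$), Proposition \ref{Prop:DecayInvertibility} yields a unique normalized $\dot{\mc{h}}'_1$ with $\mr{\mc{tr}}(\dot{\mc{h}}'_1)=0$, $L'_\mc{F}(\dot{\mc{h}}'_1)=\mr{\mc{E}}_1$, and $\|\dot{\mc{h}}'_1\|^{2,\alpha,-\gamma}_1 \leq^C \epsilon$. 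Define $\dot{\mc{h}}'_2 := \dot{\mc{h}}/\omega - \dot{\mc{h}}'_1$, so that $\dot{\mc{h}}=\omega(\dot{\mc{h}}'_1+\dot{\mc{h}}'_2)$ automatically. By Lemma \ref{Prop:TauVariationEquation}, this satisfies
\[
L'_\mc{F}|_{\mc{h}'}(\dot{\mc{h}}'_2) \;=\; \ol{\mc{E}}_1 + \mc{E}_2 \;-\; \bigl(L'_\mc{F}|_{\mc{h}'} - L'_\mc{F}\bigr)(\dot{\mc{h}}'_1).
\]

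Next, I would check that $\dot{\mc{h}}'_2$ is normalized with vanishing higher-mode trace. The trace of $\mc{h}^{\tau}_{\tau_0,\delta,f}$ on $\partial\mc{N}^*_{\tau_0}$ equals $f$ (and $\omega|_{\partial}=1$), so $\mr{\mc{tr}}(\dot{\mc{h}}/\omega)=0$; combined with $\mr{\mc{tr}}(\dot{\mc{h}}'_1)=0$ this gives $\mr{\mc{tr}}(\dot{\mc{h}}'_2)=0$. For the signature at $s=0$, the key observation is that $\dot{\phi}_{\tau_0}(0)=0$ by \eqref{DotPhiExpression}, so differentiating the chain-rule expression $\dot{\mc{h}}(x,s)=\partial_\tau\mc{h}_{\tau,\delta,f}(x,s)|_{\tau_0} + \partial_s\mc{h}_{\tau_0,\delta,f}(x,s)\dot{\phi}_{\tau_0}(s)$ at $s=0$ and using that $\mc{h}_{\tau,\delta,f}$ is normalized (so $\ol{\mc{h}}(x,0)=\ol{\partial_s\mc{h}}(x,0)=0$) shows $\mc{s}(\dot{\mc{h}})=0$; since $\omega'(0)=0$, also $\mc{s}(\dot{\mc{h}}/\omega)=0$, and therefore $\mc{s}(\dot{\mc{h}}'_2)=0$.

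The right hand side is bounded in $W^{0,\alpha,\gamma}_\tau$ as follows. From Lemma \ref{E1Estimates} and $\omega_\tau^{-\gamma}\tau^{\gamma}=\cosh^{-\gamma}\leq 1$ one has $\|\ol{\mc{E}}_1\|^{0,\alpha,\gamma}_\tau \leq^C \epsilon$; Lemma \ref{Prop:TauVariationEquation} gives $\|\mc{E}_2\|^{0,\alpha,\gamma}_\tau \leq^C \epsilon$; and for the perturbation I use that $L'_\mc{F}|_{\mc{h}'}-L'_\mc{F}=\int_0^1 D^2 H'_\mc{F}|_{t\mc{h}'}(\mc{h}',\cdot)\,dt$ together with the pointwise bounds $\|\mc{h}'|_{s_0}\|^{2,\alpha}\leq^C\epsilon\omega_0$ (Proposition \ref{Prop:ImprovedDecay}) and $\|\dot{\mc{h}}'_1|_{s_0}\|^{2,\alpha}\leq^C\epsilon\cosh^{-\gamma}(s_0)$, so the localization is $O(\epsilon^2\tau\cosh^{1-\gamma}(s_0))$; multiplying by $\omega_\tau^{-\gamma}$ and using $\cosh(l_\tau)=1/\tau$ one gets $\|(L'_\mc{F}|_{\mc{h}'}-L'_\mc{F})(\dot{\mc{h}}'_1)\|^{0,\alpha,\gamma}_\tau \leq^C \epsilon^2$. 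Finally, since $L'_\mc{F}|_{\mc{h}'}$ is a small perturbation of $L'_\mc{F}$ in the weighted spaces (via $\|\mc{h}'\|^{2,\alpha,1}_\tau\leq^C\epsilon$, Proposition \ref{Prop:ImprovedDecay}), Proposition \ref{Prop:WeightedInvertibilty} gives $\|\dot{\mc{h}}'_2\|^{2,\alpha,\gamma}_\tau \leq^C \epsilon$.

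The main obstacle I expect is the bookkeeping in the third paragraph: confirming that $\dot{\mc{h}}'_2$ is normalized and trace-free, which amounts to the crucial vanishing $\dot{\phi}_{\tau_0}(0)=0$ at $s=0$ ensuring the pullback does not disturb the normalization to first order, and then controlling the perturbation term $(L'_\mc{F}|_{\mc{h}'}-L'_\mc{F})(\dot{\mc{h}}'_1)$ in the correct weighted norm — which is subtle because $\dot{\mc{h}}'_1$ lives in the decaying $\tau=1$ space while we measure in the growing $\tau$-scaled space, so the mismatch in weights has to be absorbed by $\cosh(l_\tau)=1/\tau$.
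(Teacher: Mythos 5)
Your argument is correct, and it reaches the lemma by a partly different route than the paper. The first half coincides: both you and the paper feed the higher part $\mr{\mc{E}}_1$ into Proposition \ref{Prop:DecayInvertibility}, using Lemma \ref{E1Estimates} to get a normalized solution bounded in $\|\cdot\|^{2,\alpha,-\gamma}_{1}$. The divergence is in the treatment of $\ol{\mc{E}}_1$: the paper integrates the three lower modes explicitly by variation of parameters (using the kernel elements $\tanh(s)/\cosh(s)$ and $\cosh^{-2}(s)$), obtaining a normalized, decaying solution that is absorbed into $\dot{\mc{h}}'_1$, whereas you observe that the bound $\|\ol{\mc{E}}_1\|^{0,\alpha,0}_{1}\leq^{C\epsilon}\tau^{\gamma}$ makes $\ol{\mc{E}}_1$ uniformly small in the growing space $W^{0,\alpha,\gamma}_{\tau}$ (since $\omega_\tau^{-\gamma}\tau^{\gamma}=\cosh^{-\gamma}\leq 1$) and lump it with $\mc{E}_2$, to be handled by Proposition \ref{Prop:WeightedInvertibilty}. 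Your route avoids the explicit ODE computation at the price of losing the exponential decay of the lower-mode contribution, but since the lemma (and its downstream use in Proposition \ref{Prop:HDotBounds}) only requires the two stated norms, this loss is harmless; your weight bookkeeping for $\ol{\mc{E}}_1$ and for the commutator term $(L'_\mc{F}|_{\mc{h}'}-L'_\mc{F})(\dot{\mc{h}}'_1)$ (where $\tau^{1-\gamma}\cosh^{1-2\gamma}(s_0)\leq 1$ on $\Omega_{l_\tau}$ for either sign of $1-2\gamma$) checks out. Two further points where you are actually more careful than the paper: the paper's proof silently replaces the linearization at $\mc{h}'$ by $L'_\mc{F}$ and silently uses that $\dot{\mc{h}}/\omega$ is normalized with $\mr{\mc{tr}}(\dot{\mc{h}}/\omega)=0$; you make the first explicit via the perturbation term together with the perturbed-operator version of Proposition \ref{Prop:WeightedInvertibilty} (the same move the paper makes in Proposition \ref{Prop:ImprovedDecay}), and you verify the second by the chain rule together with $\dot{\phi}_{\tau_0}(0)=0$ from (\ref{DotPhiExpression}) and $\omega'(0)=0$, differentiating the normalization of $\mc{h}_{\tau,\delta,f}$ in $\tau$. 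Structurally, you also obtain $\dot{\mc{h}}'_2$ directly as the remainder and apply the a priori estimate, rather than constructing a second solution and invoking uniqueness among normalized functions as the paper does; both are legitimate ways to close the argument.
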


\begin{proof}
 By Proposition \ref{Prop:WeightedInvertibilty}, there is a unique function $\dot{\mc{h}}'_2 \in W^{2,\alpha, \gamma}_{\tau, n}$ such that 
 \[
 L_F' (\dot{\mc{h}}'_2) = \mc{E}_2, \quad \mr{\mc{tr}} (\dot{\mc{h}}'_2) = 0,
 \]
 and satisfying the estimate
 \[
 \| \dot{\mc{h}}'_2\|^{2, \alpha, \gamma}_{\tau} \leq^C \| \mc{E}_2\|^{0, \alpha, \gamma}_{\tau} \leq^C \epsilon,
 \]
 where $C > 0$ is a universal constant. Next, we solve for the function $\mc{E}_1$. Write:
 \[
 \mc{E}_1 : = \mr{\mc{E}}_1 + \ol{\mc{E}}_1,
 \]
 where $ \ol{\mc{E}}_1$ and $ \mr{\mc{E}}_1$ denote the lower and higher parts of $\mc{E}_1$, respectively. By  Proposition \ref{Prop:DecayInvertibility}, the estimate for $\mc{E}_1$ in Lemma \ref{E1Estimates} and since $\left\| \mr{\mc{E}}_1\right\| \leq \left\|\mc{E}_1\right\|$, there is a unique function $\mr{\mc{h}}'_1$ such that 
\[
L'_{\mc{F}} \mr{\mc{h}}'_1 = \mr{\mc{E}}_1, \quad \mc{tr} (\mr{\mc{h}}') = 0
\]
and satisfying the estimate
\[
\left\| \mr{\mc{h}}'_{1}\right\|^{2, \alpha, -\gamma}_{1} \leq^C \epsilon.
\]
Solving for $\ol{\mc{E}}_1$ can be done by separating variables and integrating as follows: We have
 \[
 \ol{\mc{E}}_1 =  a(s) + b(s) \cos(x) + c(s) \sin (x).
 \] 
  for class $C^{0, \alpha}$ funtions $a$, $b$ and $c$. Moreover, by the estimate for $\ol{\mc{E}_1}$ in Lemma (\ref{E1Estimates}),  the coefficients $a$, $b$ and $c$ satisfy the bounds
  \[
  |a|, |b|, |c| \leq^C\epsilon \tau^\gamma.
  \]
   Since $\frac{\tanh(s)}{\cosh(s)}$ is in the kernel of $L_{\mc{F}}'$, variation of parameters gives 
  \[
  \ol{\mc{h}}'_{a} (s) : = \frac{\tanh{s}}{\cosh(s)} \int_0^{s} \tanh^{-2}(s') \int_0^{s'} \sinh(s'') a(s'') ds'' ds'
  \]
  as a solution to the equation 
  \[
  L'_\mc{F} \ol{\mc{h}}'_{a}= a.
  \]
  Moreover, we clearly have 
  \[
  \ol{\mc{h}}'_{a}(0) = \left. \frac{\partial \ol{\mc{h}}'_{a}}{\partial s} \right|_{(x, 0)}= 0,
  \]
   and the estimate:
  \[
  \left| \ol{\mc{h}}'_{a}(s)\right| \leq^C \| a \|/\cosh(s) \leq^C \tau^\gamma \epsilon/\cosh(s).
  \]
Solving for the term $b(s) \cos(x)$ is similar,  and variation of parameters yields the solution
\[
\ol{\mc{h}}'_{b} = \left(\cosh^{-2} (s) \int_{0}^{s} \cosh^2(s') \int_{0}^{s'} b(s'')  ds'' ds'\right) \cos(x), 
\]
which is again normalized and satisfies the estimate 
\[
\left\| \ol{\mc{h}}'_{b}\right\|^{2, \alpha} \leq^C \| b\| \leq^C \tau^\gamma \epsilon.
\]
A function $\ol{\mc{h}}'_{c}$ solving $L_\mc{F}' \ol{\mc{h}}'_{c}= c \sin(x)$ and given by the expression above--with $\sin(x)$ substituted for $\cos(x)$--can be found with the same methods.  The function $\ol{\mc{h}}'_1 := \ol{\mc{h}}'_{a} + \ol{\mc{h}}'_{b} + \ol{\mc{h}}'_{c}$ then satisfies
\begin{align}
L'_\mc{F} \ol{\mc{h}}'_{1} = \ol{\mc{E}}_1.
\end{align}
and the estimate 
\begin{align}
\left\|\ol{\mc{h}}'_{1}\right\|^{2, \alpha} \leq^C \tau^\gamma \epsilon
\end{align}
and thus
\begin{align*}
\left\|\ol{\mc{h}}'_{1}\right\|^{2, \alpha, -\gamma}_{1} \leq^C \epsilon.
\end{align*}
The function $\mc{h}'_1 : = \ol{\mc{h}}'_1 + \mr{\mc{h}}'_1$ then satisfies 
\[
\mc{L}'_{\mc{F}}\mc{h}'_1 = \mc{E}_1
\]
and 
\[
\left\| \mr{\mc{h}}'_{1}\right\|^{2, \alpha, -\gamma}_{1} \leq^C \epsilon.
\]
We then have
 \[
 L'_\mc{F}\left( \frac{\dot{\mc{h}}}{\omega}  +  \mc{h}'_1  + \mc{h}'_2 \right) = 0, \quad \mr{\mc{tr}} \left( \frac{\dot{\mc{h}}}{\omega}  +  \mc{h}'_1  + \mc{h}'_2 \right) = 0
 \]
 Since $\dot{\mc{h}}$,  $\mc{h}'_1$ and $\dot{\mc{h}}'_2$ are normalized, by Proposition \ref{Prop:WeightedInvertibilty} we have 
 \[
 \dot{\mc{h}} =   - \omega \left(\mc{h}'_1 + \mc{h}'_2\right)
 \]
 This completes the proof. 
  \end{proof}

\begin{proof}[Proof of Proposition \ref{Prop:HDotBounds}/Theorem \ref{Prop:MainTheoremDifferentiability}]
We will identify the functions $\dot{\mc{h}} = \dot{\mc{h}}_{\tau_0}$ with their push-forwards to $\mc{N}_{\tau_0}$ under $\mc{G}_{\tau_0}$. Then the bound in Lemma \ref{Eq:HDotTotalBound} translates to 
 \begin{align}\label{Eq:HDotTotalBoundAgain}
 \left| \dot{\mc{h}} (r, \theta)\right| \leq^C \frac{\tau^\gamma}{r^\gamma} + r^{1 + \gamma}
 \end{align}
 which holds on compact subsets of $\mc{N}_{\tau_0}$ away from the origin.  Consider a sequence $\tau_j \rightarrow 0$. Then the functions $\dot{\mc{h}}^+_j : = \dot{\mc{h}}^+_{\tau_j}$ satisfy 
   \[
   \left. DH_{\mc{N}_{\tau}}\right|_{\mc{h}_j^+}\left( \dot{\mc{h}}^{+}_{j}\right) =-  \left. D H \right|_{\mc{N}^+_{j}}\left(\dot{\mc{N}}^+_{j} + \mc{h}_j \dot{N}_{\mc{N}} \right)
   \]
 The variation field generated by the family $\mc{N}^+_{\tau}$ at $\tau = 0$ is $-  \log(r) e_z$, and thus 
  \[
  \dot{N}_{\mc{N}^+_{j}} \rightarrow \frac{1}{r} e_r
  \]
  as $j \rightarrow 0$ smoothly away from the origin.  The functions $\dot{\mc{h}}^{+}_j$ then converge  in $C^{2, \alpha/2}$ to a limiting function $\dot{\mc{h}}^+_{0}$ on the punctured  unit disk $\mc{D}$ satisfying the limiting equation
 \begin{align}
 \left. D H_\mc{D} \right|_{\mc{h}}\left(  \dot{\mc{h}}^\pm_0 \right) = \mc{E}, \quad  \mr{\mc{tr}}(\dot{\mc{h}}^\pm) = 0,
  \end{align}
  where 
  \[
  \mc{E} = \pm \left. D H \right|_{\mc{N}^+_{\delta, f}}\left( +\log (r) e_z - \mc{h}^+ \frac{e_r}{r}\right)
  \]
  and the estimate 
  \[
   \left\|\dot{\mc{h}}^+ \right\| \leq^C r^{1 + \gamma}.
  \]
Thus, $\dot{\mc{h}}^+$ is normalized and solves the same modified Poission-Dirichlet problem on $\mc{D}$ as $\dot{\mc{h}}^+_{\delta, f}$. Since solutions are unique (Lemma \ref{ModifiedDiskProblem}), it follows that the family of functions  $\dot{\mc{h}}^\pm_{\tau, \delta, f}$ extends continuously to $\tau = 0$.
  \end{proof}
  \subsection{Criteria for embeddedness}
 When $\delta = 0$, the surfaces $\mc{N}_{\tau, 0, f}$ are  minimal surfaces, and thus by the maximum principle they are embedded provided the boundary is.  If we orient the surface $\mc{N}_{\tau, \delta, f}$ by the downward pointing unit normal, then for $\delta > 0$ the surfaces $\mc{N}_{\tau, \delta, 0}$, which are rotationally symmetric about the $z$-axis and under reflexions through the plane $z = 0$, are embedded. Moreover, as a direct consequence of  the estimate (\ref{LimitEstimate}), it follows that  the surface $\mc{N}_{\tau, \delta, f}$ is embedded provided $f$ is small in terms of $\delta$.

\bibliographystyle{amsalpha}

\begin{thebibliography}{99}

\bibitem{andrews-hopper}
Ben Andrews and Christopher Hopper, 
\newblock \emph{The Ricci flow in Riemannian Geometry} (Springer, Berlin/Heidelberg, 2011)

\bibitem{breiner-kapouleas1}
Christine Breiner and Nikolaos Kapouleas,
\emph{Embedded constant mean curvature surfaces in Euclidean three-space}, Math. Ann. 360 (2014), no. 3-4, 1041-1108.

\bibitem{breiner-kapouleas2}
C. Breiner and N. Kapouleas,
\emph{Complete constant mean curvature hypersurfaces in euclidean space of dimension four or higher}, Amer. J. Math., 143(4):1161–1259, 2021.
\bibitem{delaunay}
C. Delaunay,
\newblock  \emph{Sur la surface de revolution dont la courbure myonne est constant}
\newblock{\em Journal de Mathematiques Pures et Appliquees}, 6 (1841), 309-320. 

\bibitem{Gilbarg-Trudinger}
Gilbarg and Trudinger
 Elliptic partial differential equations of second order. Classics in Mathematics. Springer-
Verlag, Berlin, 2001. Reprint of the 1998 edition.






\bibitem{Kapouleas4}
 N. Kapouleas
 \emph{ Minimal surfaces in the round three-sphere by doubling the equatorial two-sphere}
  I. J. Differential Geom., 106(3):393–449, 2017.

\bibitem{Kapouleas}
N. Kapouleas,
\emph{Complete constant mean curvature surfaces in Euclidean three space}, Ann. Math. (2) 131 (1990), 239-330

\bibitem{Kapouleas2}
N. Kapouleas
 \emph{Compact constant mean curvature surfaces in Euclidean three-space}, J. Differential Geom. 33 (1991), no. 3, 683–715.

\bibitem{Kapouleas3}
N. Kapouleas
 \emph{Constant mean curvature surfaces constructed by fusing Wente tori}, Invent. Math. 119 (1995), no. 3, 443–518.

\bibitem{Kapouleas-Mcgrath}
N. Kapouleas, P. Mcgrath
\emph{Generalizing the Linearized Doubling approach, I: General theory and new minimal surfaces and
self-shrinkers}
\emph{Cambridge Journal of Mathematics}, Volume 11, Number 2, 299–439, 2023


\bibitem{Kapouleas-Mcgrath2}
N. Kapouleas, P. Mcgrath
\emph{Free boundary minimal annuli immersed in the unit ball}, arXiv:2208.14998



\bibitem{mazzeo-pacard}
Rafe Mazzeo and Frank Pacard,
\newblock \emph{Constant mean curvature surfaces with Delaunay ends}
\newblock{\em Comm. Anal. Geom.}, 9 (1):  169--237, 2001. 

\bibitem{mazzeo-pacard-pollack}
R Mazzeo, F. Pacard, and D. Pollack
\newblock \emph{Connected sums of constant mean curvature surfaces in Euclidean 3 space}
 \newblock vol. 2001, no. 536, 2001, pp. 115-165.

\bibitem{mazzeo-pacard2}
R. Mazzeo and F. Pacard, 
\newblock \emph{Constant scalar curvature metrics with isolated singularities} 
\newblock \emph{Duke Math. J.} 99(3): 353-418 (15 September 1999). DOI: 10.1215/S0012-7094-99-09913-1.
\bibitem{Pacard-Xu}
F. Pacard and X. Xu, \emph{Constant mean curvature spheres in Riemannian manifolds}, Manuscripta Math.
128 (2009), no. 3, 275–295.









\end{thebibliography}

\end{document}